\definecolor{dgrey}{gray}{0 }
\definecolor{lgrey}{gray}{1}
\def\greyscale{1}
	\def\palette{{"dgrey", "lgrey"}}
	\definecolor{rline}{rgb}{0 0 0}
\newtheorem{theorem}{Theorem}[section]
\newtheorem*{theorem*}{Theorem}
\newtheorem{lemma}[theorem]{Lemma}
\newtheorem{example}[theorem]{Example}
\newtheorem{corollary}[theorem]{Corollary}
\newtheorem{proposition}[theorem]{Proposition}
\newtheorem{remark}[theorem]{Remark}
\newtheorem{definition}[theorem]{Definition}
\newcommand{\K}{\mathcal{K}}
\newcommand{\J}{\mathcal{J}}
\newcommand{\I}{\mathcal{I}}
\newcommand{\hh}{\mathcal{H}}
\newcommand{\LL}{\mathcal{L}}
\newcommand{\F}{\mathcal{F}}
\newcommand{\A}{\mathcal{A}}
\newcommand{\B}{\mathcal{B}}
\newcommand{\W}{\mathcal{W}}
\newcommand{\C}{\mathbb{C}}
\newcommand{\mrx}{\mathring{\mathcal{X}}}
\newcommand{\X}{\mathcal{X}}
\newcommand{\Y}{\mathcal{Y}}
\newcommand{\mry}{\mathring{\mathcal{Y}}}
\newcommand{\talpha}{\tilde{\alpha}}
\begin{document}
\unitlength=1mm
\special{em:linewidth 0.4pt}
\linethickness{0.4pt}
\title[Free-Boolean]{ Free-free-Boolean independence for triples  of algebras}
\author{Weihua Liu}
\maketitle
\begin{abstract}
In this paper, we introduce the notion of free-free-Boolean independence relation for triples of algebras. We define free-free-Boolean cumulants ans show that the vanishing of mixed cumulants is equivalent to free-free-Boolean independence. A free-free -Boolean central limit law is studied.
 \end{abstract}
\section{Introduction}
In noncommutative probability,  independence relations between random variables  provide specific rules for calculations of  all their mixed moments. 
It was shown in \cite{Sp2} that  there are  exactly three commutative and associative independence relations, namely the classical independence,  Voiculescu's free independence \cite{VDN} and the Boolean independence \cite{SW}. 
Bi-free independence relation was introduced  by Voiculescu  as a  generalization of free independence relation to an independence relation for pairs of algebras. 
In bi-free probability,   left  and right regular representations of pairs of algebras on reduced free products  of vector spaces with specified vectors are studied  simultaneously \cite{Voi1}.  
Moreover, bi-free probability started a program of studying independence relations for pairs of algebras. 
For example,   conditionally bi-free independence, bi-Boolean independence, bi-monotone independence are developed in \cite{GS, GS1, GHS}.
 
In \cite{Liu3}, the author introduced a notion of mixed independence relations  which are defined via truncations of reduced free products of algebras. 
In this paper, we generalize this idea further, that is we study independence relations for triples of algebras.  
The specific independence relation we study in this paper is  free-free-Boolean independence.  This is the only commutative and associative independence relation defined  via  regular representations of algebras on reduced free products of vector spaces with specified vectors \cite{Liu3}.  
As in the combinatorial aspects of the other commutative independence relations, we introduce an associative  family of partitions which we call interval-bi-noncrossing partitions.  
We show that the families of interval-bi-noncrossing partitions are lattices.  
Then, we define free-free-Boolean cumulants via M\"obius inversion functions on interval-bi-noncrossing partitions and show that the vanishing of mixed free-free-Boolean cumulants is equivalent to free-free-Boolean independence.  
This allows us to obtain a central limit law for free-free-Boolean independence.

Besides this introduction section, the paper is organized as follows:  
In Section 2, we briefly review the constructions of mixed independence relations and define our free-free-Boolean independence relation. 
In Section 3,  we introduce a notion of  interval-bi-noncrossing partitions and study their lattice structures.
In Section 4,  we study  the M\"obius inversion functions on the lattices of interval-bi-noncrossing partitions.  Free-free-Boolean cumulants and combinatorially free-free-Boolean independence are introduced.
  In Section 5,  we show that the vanishing of mixed combinatorially free-free-Boolean cumulants is equivalent to the  free-free-Boolean independence.
  In Section 6, as an application of the main theorem in Section 5,  we  study a free-free-Boolean central limit law.
\section{Preliminaries and Notation}
In this section, we briefly review  notions and constructions of free independence and Boolean independence. The main purpose is to give a constructive definition for free-free Boolean independence. Let us start with some necessary definitions.
\begin{definition}\normalfont  Let $\I$ be an index set and  $(\A,\phi)$ be a noncommutative probability space where $\A$ is an algebra and $\phi$ is a linear functional on $\A$ such that $\phi(1_{\A})=1$. \\
 A family of unital subalgebras $\{\A_i|i\in I \}$ of $\A$ is said to be freely independent  if
$$\phi(x_1\cdots x_n)=0,$$
whenever $x_k\in\A_{i_k}$, $i_1\neq i_2\neq\cdots\neq i_n$ and $\phi(x_k)=0$ for all $k$. 

A family of (not necessarily unital) subalgebras $\{\A_i|i\in I \}$ of $\A$ is said to be Boolean independent if
$$\phi(x_1x_2\cdots x_n)= \phi(x_1)\phi(x_2)\cdots \phi(x_n),$$
whenever $x_k\in\A_{i_k}$ with $i_1\neq i_2\neq\cdots \neq i_n$. \\

A set of random variables $\{x_i\in\A|i\in \I\}$ is said to be freely(Boolean) independent if the family of unital(non-unital) subalgebras $\A_i$, which is generated by $x_i$ respectively, is freely (Boolean) independent.
\end{definition} 

\begin{definition}\normalfont  A \emph{vector space with a specified vector} is a  triple $(\X,\mrx, \xi)$ where $\X$ is a vector space, $\mrx$ is a codimension one subspace of $\mathcal{X}$  and $\xi\in\mathcal{X}\setminus\mrx$.
\end{definition}

Let $(\mathcal{X},\mrx, \xi)$ be a  vector space with a specified vector. 
Notice that $\X=\C\xi\oplus\mrx $,  there exists a unique linear functional $\phi$ on $\mathcal{X}$  such that $\phi(\xi)=1$  and $\ker(\phi)=\mrx$.  
We  denote by $\mathcal{L}(\mathcal{X})$ the algebra of linear operators on $\mathcal{X}$ and we define a  linear functional $\phi_\xi:\LL(\X)\rightarrow \C$ such that $\phi_\xi(T)=\phi(T\xi), T\in \LL(\X)$.  

Given a family of vector spaces with specified vectors  $(\X_i, \mrx_i,\xi_i)_{i\in \I}$ , their  reduced free product space $(\mathcal{X}, \mrx,\xi)=\underset{i\in \I}{*}(\X_i, \mrx_i,\xi_i)$ is given by $\X=\C\xi\oplus\mrx$ where
 $$\mrx=\bigoplus\limits_{n\geq 1}\left(\bigoplus\limits_{i_1\neq i_2\neq\cdots \neq i_n} \mathring{\mathcal{X}_{i_1}}\otimes\cdots \otimes\mrx_{i_n}\right). $$
For each $i\in \I$, we let
$$\X(\ell,i)= \mathbb{C}\xi\oplus\mrx(\ell,i)\quad \text{where}\quad \mrx(\ell,i)=\bigoplus\limits_{n\geq 1}\left(\bigoplus\limits_{i_1\neq i_2\neq\cdots \neq i_n, i_1\neq i} \mathring{\mathcal{X}_{i_1}}\otimes\cdots \otimes\mrx_{i_n}\right)$$
and
$$\X(r,i)=\mathbb{C}\xi\oplus\mrx(r,i)\quad \text{where}\quad \mrx(r,i)=\bigoplus\limits_{n\geq 1}\left(\bigoplus\limits_{i_1\neq i_2\neq\cdots \neq i_n, i_n\neq i} \mathring{\mathcal{X}_{i_1}}\otimes\cdots \otimes\mrx_{i_n}\right).$$
As was shown in  \cite{Voi1}, there are natural  linear isomorphisms: $V_i:\mathcal{X}_i\otimes \mathcal{X}(\ell,i) \rightarrow \mathcal{X}$ and $W_i: \mathcal{X}(r,i)\otimes\mathcal{X}_i \rightarrow \mathcal{X}$.
Therefore,  for each $i\in \I$, the algebra $\mathcal{L}(\mathcal{X}_i)$ has a left representation $\lambda_i$ and a right representation $\rho_i$, on $\X$, which are given by
$$\lambda_i(T)=V_i(T\otimes I_{\mathcal{X}(\ell,i)})V_i^{-1}$$
and 
$$\rho_i(T)=W_i(I_{\mathcal{X}(r,i)}\otimes T )W_i^{-1}$$ 
for every $T\in \mathcal{L}(\mathcal{X}_i)$, where $I_{\mathcal{X}(r,i)}$ and $I_{\mathcal{X}(\ell,i)}$ are the identity operators on ${\mathcal{X}(r,i)}$ and ${\mathcal{X}(\ell,i)}$ respectively.   

For each $i\in\I$, let $P_i$ be the projection from $\X$ onto the subspace  $\C\xi\oplus\mrx_i$ which vanishes on all the other direct summands. 
\begin{proposition}\label{simple Boolean}\normalfont
For any $a\in \LL(X_i)$, we have
$P_i\lambda_i(a)=\lambda_i(a)P_i$. 
\end{proposition}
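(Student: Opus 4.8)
The plan is to transport $P_i$ through the isomorphism $V_i:\X_i\otimes\X(\ell,i)\to\X$ and recognize it there as a simple tensor of operators; once that is done, commutation with $\lambda_i(a)$ is automatic.

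First I would record the explicit form of $V_i$ from \cite{Voi1}. Decomposing the domain as
\[
\X_i\otimes\X(\ell,i)=(\C\xi_i\otimes\C\xi)\oplus(\mrx_i\otimes\C\xi)\oplus(\C\xi_i\otimes\mrx(\ell,i))\oplus(\mrx_i\otimes\mrx(\ell,i)),
\]
the map $V_i$ sends these four summands, respectively, onto $\C\xi$, onto the length-one summand $\mrx_i$ of $\mrx$, onto the part of $\mrx$ whose first tensor leg has index $\neq i$ (which is precisely $\mrx(\ell,i)$), and onto the part of $\mrx$ whose first tensor leg has index $i$ and whose length is $\geq 2$. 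In particular $\C\xi\oplus\mrx_i=V_i(\X_i\otimes\C\xi)$, and $V_i(\X_i\otimes\mrx(\ell,i))$ is the direct sum of all the remaining summands of $\X$.

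Let $Q\in\LL(\X(\ell,i))$ be the projection onto $\C\xi$ that annihilates $\mrx(\ell,i)$. Then $I_{\X_i}\otimes Q$ has range $\X_i\otimes\C\xi$ and kernel $\X_i\otimes\mrx(\ell,i)$, so by the previous paragraph $V_i(I_{\X_i}\otimes Q)V_i^{-1}$ is exactly the projection of $\X$ onto $\C\xi\oplus\mrx_i$ that vanishes on all the other direct summands; that is, $P_i=V_i(I_{\X_i}\otimes Q)V_i^{-1}$. Since operators of the form $a\otimes I_{\X(\ell,i)}$ and $I_{\X_i}\otimes Q$ commute on the tensor product, for every $a\in\LL(\X_i)$ we obtain
\[
P_i\lambda_i(a)=V_i(I_{\X_i}\otimes Q)(a\otimes I_{\X(\ell,i)})V_i^{-1}=V_i(a\otimes Q)V_i^{-1}=V_i(a\otimes I_{\X(\ell,i)})(I_{\X_i}\otimes Q)V_i^{-1}=\lambda_i(a)P_i.
\]

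The step that needs genuine care — and the one I expect to be the main obstacle — is matching the internal direct-sum decomposition of $\mrx$ according to the index of the first tensor leg with the image under $V_i$ of the four-fold decomposition of $\X_i\otimes\X(\ell,i)$; with the explicit description of $V_i$ in hand this is bookkeeping, but it must be carried out precisely enough that the identity $P_i=V_i(I_{\X_i}\otimes Q)V_i^{-1}$ is verified rather than merely made plausible. One could instead bypass $V_i$ and check the asserted identity directly on the four types of simple tensors in $\X_i\otimes\X(\ell,i)$, but the tensor-operator argument above is shorter and conceptually cleaner.
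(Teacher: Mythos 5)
Your proof is correct and takes essentially the same route as the paper's: both arguments rest on the identification $V_i^{-1}(\C\xi\oplus\mrx_i)=\X_i\otimes\C\xi$ and of the complementary direct summand of $\X$ with $V_i(\X_i\otimes\mrx(\ell,i))$. The paper phrases the conclusion as invariance of the range and the kernel of $P_i$ under $\lambda_i(a)$, whereas you phrase it as $P_i=V_i(I_{\X_i}\otimes Q)V_i^{-1}$ with $Q$ the projection of $\X(\ell,i)$ onto $\C\xi$ — equivalent formulations of the same argument.
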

\begin{proof}
Notice that 
$$
\X=\C\xi\oplus\mrx_i\oplus\X_i',
\quad \text{where}\quad \X_i= \bigoplus\limits_{j\neq i}\mrx_j
 \oplus \bigoplus_{n\geq 2}\left(\bigoplus_{\substack{i_1\neq i_2\neq \cdots \neq i_n}}
 \mrx_{i_1}\otimes\cdots\otimes\mrx_{i_n}\right).
$$
By direct computations, we have that  $$V_i^{-1}\X_i'=\X_i\otimes \mrx(\ell,i)\quad \text{and} \quad V_i^{-1}(\C\xi\oplus\mrx_i)=\X_i\otimes \xi$$ 
which are invariant under $\LL(\X_i)\otimes I_{\X(\ell,i)}$. Therefore, $\X_i'$ and $\C\xi\oplus\mrx_i$ are  invariant under $\lambda_i(a)$ for any $a\in\LL(\X_i)$. The statement  follows. 
\end{proof}
The same we have the following statement for $\rho_i$.
\begin{corollary}\normalfont
For any $a\in \LL(X_i)$, we have
$P_i\rho_i(a)=\rho_i(a)P_i$. 
\end{corollary}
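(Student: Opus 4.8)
The plan is to mirror the proof of Proposition~\ref{simple Boolean} essentially verbatim, interchanging the roles of the left isomorphism $V_i$ and the right isomorphism $W_i$. First I would recall from that proof the decomposition $\X=\C\xi\oplus\mrx_i\oplus\X_i'$, where $\X_i'$ collects all the direct summands of $\mrx$ other than $\mrx_i$, and observe that, by its very definition, $P_i$ is the projection of $\X$ onto $\C\xi\oplus\mrx_i$ that annihilates $\X_i'$. Hence it is enough to show that $\C\xi\oplus\mrx_i$ and $\X_i'$ are both invariant under $\rho_i(a)$ for every $a\in\LL(\X_i)$: if so, $\rho_i(a)$ preserves both the range and the kernel of $P_i$, which forces $P_i\rho_i(a)=\rho_i(a)P_i$.

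The key step is to compute $W_i^{-1}$ on these two summands, the right-hand analogue of the identities $V_i^{-1}\X_i'=\X_i\otimes\mrx(\ell,i)$ and $V_i^{-1}(\C\xi\oplus\mrx_i)=\X_i\otimes\xi$ used for $\lambda_i$. I expect to obtain
$$W_i^{-1}\X_i'=\mrx(r,i)\otimes\X_i \qquad\text{and}\qquad W_i^{-1}(\C\xi\oplus\mrx_i)=\xi\otimes\X_i.$$
Both subspaces on the right are manifestly invariant under $I_{\X(r,i)}\otimes\LL(\X_i)$, so conjugating back through $W_i$ shows that $\X_i'$ and $\C\xi\oplus\mrx_i$ are invariant under $\rho_i(a)=W_i(I_{\X(r,i)}\otimes a)W_i^{-1}$, which is exactly what is needed.

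The only real content — and the step I would treat with most care, though it is routine — is verifying these two identities for $W_i^{-1}$. Concretely, one checks on reduced words that removing the rightmost $\X_i$-letter from a word in $\X_i'$ always leaves a word whose last letter does not lie in $\X_i$, i.e.\ a word in $\mrx(r,i)$, precisely because of the constraint $i_n\neq i$ in the definition of $\mrx(r,i)$; and symmetrically that the words spanning $\C\xi\oplus\mrx_i$ correspond under $W_i^{-1}$ to $\xi\otimes\X_i$. Since this is nothing but the left-right reflection of the bookkeeping already carried out for $V_i$ in the proposition, one may instead simply appeal to that symmetry, which is how I would present it.
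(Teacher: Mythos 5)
Your proposal is correct and is exactly the argument the paper intends: the paper proves the Corollary simply by noting it is "the same" as Proposition~\ref{simple Boolean}, and your mirrored computation with $W_i$ in place of $V_i$ (in particular the identities $W_i^{-1}\X_i'=\mrx(r,i)\otimes\X_i$ and $W_i^{-1}(\C\xi\oplus\mrx_i)=\xi\otimes\X_i$) is the correct left--right reflection of that proof.
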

 By Proposition 2.4 in \cite{Liu3}, $P_i\lambda_i(\cdot)P_{i}$ and $P_i\rho_i(\cdot)P_{i}$ are a same homomorphism from $\LL(\X_i)$ to $\LL(X)$.

\begin{definition} \normalfont
A triple of faces  in a noncommutative probability space $(\A,\phi)$ is an ordered triple $((B,\beta),$ $(C,\gamma), (D,\delta))$ where  $B,C,D$ are algebras and $\beta:B\rightarrow \A$, $\gamma:C\rightarrow \A$, $\delta:D\rightarrow \A$, are homomorphisms which are not necessarily unital.  If $B,C,D$ are subalgebras of $\A$  and $\beta,\gamma,\delta$ are inclusions, then the triple will be denoted by $(B,C,D)$.
\end{definition}

\begin{definition}\normalfont Let $\Gamma=\{((B_i,\beta_i),$ $(C_i,\gamma_i), (D_i,\delta_i))\}_{i\in \I}$ be a family of triples of faces in $(\A,\phi)$. 
The joint distribution of $\Gamma$ is the  functional $\mu_\Gamma: \underset{i\in \I}{\bigstar} (B_{i}\bigstar C_i\bigstar D_i)\rightarrow \C$ defined by $\mu_\Gamma=\phi\circ \alpha$, 
where $\underset{i\in \I}{\bigstar} (B_{i}\bigstar C_i \bigstar D_i)$ is the universal nonunital free product of $\{(B_i,C_i,D_i)\}_{i\in \I}$  and $\alpha:\underset{i\in \I}{\bigstar} (B_{i}\bigstar C_i \bigstar D_i)\rightarrow \A$ is the homomorphism such that $\alpha|_{B_i}=\beta_i$, $\alpha|_{C_i}=\gamma_i$ and $\alpha|_{D_i}=\delta_i$.
\end{definition}

\begin{definition}\normalfont  A three-faced  family of  random variables in a noncommutative probability space $(\A, \phi) $ is an ordered triple  $a=\{(b_i)_{i\in \I},(c_j)_{j\in \J},(d_k)_{k\in \K}\}$ of families of  random variables in  $(\A, \phi) $( i.e. the $b_i$, $c_j$ and $d_k$ are elements of $\A$).  The distribution $\mu_a$ of $a$ is the  functional 
$$\mu_a:\C\langle X_i, Y_j,Z_k|i\in \I, j\in \J,k\in \K\rangle \rightarrow \C$$ 
such that $\mu_a=\phi\circ \alpha $ where $\alpha:\C\langle X_i, Y_j, Z_k|i\in \I, j\in \J, k\in \K\rangle \rightarrow \A$ is the homomorphism  such that $\alpha(X_i)=b_i$, $\alpha(Y_j)=c_j$ and $\alpha(Z_k)=d_k$.
\end{definition}

In the following context, for convenience, we assume that $B_i,C_i,D_i$ are subalgebras of $\A$.

\begin{definition}\label{bifree-boolean} \normalfont 
Let $\Gamma=\{(B_i,C_i,D_i)\}_{i\in \I}$ be a family of  triples of faces in $(\A,\phi)$. 
Suppose that there is a family of vector spaces with specified vectors $ (\mathcal{X}_i, \mrx_i,\xi_i)_{i\in \I}$ ,  (not necessarily) homomorphisms $\ell_i:B_i\rightarrow \mathcal{L}(\X_i)$, $r_i:C_i\rightarrow \LL(\X_i)$  and $m_i:D_i\rightarrow \LL(\X_i)$.  
Let  $( \mathcal{X}, \mrx,\xi)$ be the reduced free product of $ (\mathcal{X}_i, \mrx_i,\xi_i)_{i\in \I}$ and $\phi_\xi$ is the functional associated with $\xi$ on $\LL(\X)$.  
We say that the family of triples of faces  $\{(B_i,C_i,D_i)\}_{i\in \I}$ is free-free-Boolean independent if  the joint distribution of $\{(B_i,\lambda_i(\ell_i(\cdot))),(C_i,\rho_i(r_i(\cdot))),(D_i,P_i\lambda_i(m_i(\cdot))P_i)\}_{i\in \I}$, which is in $(\LL(X),\phi_{\xi})$, is equal to the joint distribution of $\{(B_i,C_i,D_i)\}_{i\in \I}$. 
In this case, we  say that the family $\{(B_i,C_i),i\in \I)\}_{i\in \I}$ is bifree independent and the family $\{(B_i,D_i),i\in \I)\}_{i\in \I}$ is free-Boolean independent.
\end{definition}

\begin{remark}\label{Largest triple} \normalfont Notice that $\lambda_i$, $\rho_i$ and $P_i\lambda_iP_i$ are injective. Therefore,  they have left inverse, that is there exists $\ell_i$, $r_i$ and $m_i$ such that $\ell_i(\lambda_i(\cdot))=I_{\LL(\X_i)}$, $r_i(\rho_i(\cdot))=I_{\LL(\X_i)}$ and $m_i(P_i\lambda_i(\cdot)P_i)=I_{\LL(\X_i)}$. Thus  $\{(\lambda_i(\LL(\X_i)), \rho_i(\LL(\X_i)),P_i\lambda_i(\LL(\X_i))P_i )\}$  is a family of free-free-Boolean triples of faces in $(\LL(\X),\phi_\xi)$.
\end{remark}

\begin{proposition}\label{subagbras of ffb triples}\normalfont Let $\Gamma=\{(B_i,C_i,D_i)\}_{i\in \I}$ be a family of free-free-Boolean triples of faces in $(\A,\phi)$ and $\Gamma'=\{(B'_i,C'_i,D'_i)\}_{i\in \I}$ be a family of  triples of faces such that $B'_i\subset B_i$, $C'_i\subset C_i$  and $D'_i\subset D_i$, $i\in \I$. Then $\Gamma'=\{(B'_i,C'_i,D'_i)\}_{i\in \I}$ is  free-free-Boolean  in $(\A,\phi)$.
\end{proposition}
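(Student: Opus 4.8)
The plan is to exploit the fact that free-free-Boolean independence is witnessed by a choice of vector spaces with specified vectors together with (not necessarily unital) homomorphisms $\ell_i, r_i, m_i$, and that such witnessing data restricts verbatim to sub-triples; the only thing to verify is that the relevant joint distributions are compatible with the restriction, which is a purely formal consequence of the universal property of the free product.

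First, using that $\Gamma=\{(B_i,C_i,D_i)\}_{i\in\I}$ is free-free-Boolean, I would fix a family $(\X_i,\mrx_i,\xi_i)_{i\in\I}$ of vector spaces with specified vectors and homomorphisms $\ell_i\colon B_i\to\LL(\X_i)$, $r_i\colon C_i\to\LL(\X_i)$, $m_i\colon D_i\to\LL(\X_i)$ so that, with $(\X,\mrx,\xi)=\underset{i\in\I}{*}(\X_i,\mrx_i,\xi_i)$ and $\phi_\xi$ the associated functional, the joint distribution of $\{(B_i,\lambda_i(\ell_i(\cdot))),(C_i,\rho_i(r_i(\cdot))),(D_i,P_i\lambda_i(m_i(\cdot))P_i)\}_{i\in\I}$ coincides with $\mu_\Gamma$. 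For $\Gamma'$ I would keep the very same $(\X_i,\mrx_i,\xi_i)_{i\in\I}$ and take the restrictions $\ell_i':=\ell_i|_{B_i'}$, $r_i':=r_i|_{C_i'}$, $m_i':=m_i|_{D_i'}$, which are again homomorphisms since $B_i'\subset B_i$, $C_i'\subset C_i$, $D_i'\subset D_i$.

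Next I would record the functoriality needed. The inclusions $B_i'\hookrightarrow B_i$, $C_i'\hookrightarrow C_i$, $D_i'\hookrightarrow D_i$ induce, by the universal property of the nonunital free product, a canonical homomorphism
$$\iota\colon\ \underset{i\in\I}{\bigstar}(B_i'\bigstar C_i'\bigstar D_i')\ \longrightarrow\ \underset{i\in\I}{\bigstar}(B_i\bigstar C_i\bigstar D_i).$$
Writing $\alpha$ for the homomorphism into $\A$ that defines $\mu_\Gamma$ (so $\alpha$ restricts to the inclusion on each $B_i$, $C_i$, $D_i$), the homomorphism $\alpha'$ defining $\mu_{\Gamma'}$ restricts to the inclusion on each $B_i'$, $C_i'$, $D_i'$, hence equals $\alpha\circ\iota$ by the uniqueness clause of the universal property; therefore $\mu_{\Gamma'}=\phi\circ\alpha'=\mu_\Gamma\circ\iota$. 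Running exactly the same argument with $\A$ replaced by $\LL(\X)$, $\phi$ by $\phi_\xi$, and the inclusions replaced by $\lambda_i\circ\ell_i$, $\rho_i\circ r_i$, $P_i\lambda_i(m_i(\cdot))P_i$ (resp.\ their restrictions), I get that the joint distribution of the restricted representation family $\{(B_i',\lambda_i(\ell_i'(\cdot))),(C_i',\rho_i(r_i'(\cdot))),(D_i',P_i\lambda_i(m_i'(\cdot))P_i)\}_{i\in\I}$ equals $\nu\circ\iota$, where $\nu$ denotes the joint distribution of the full representation family.

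Finally I would combine the two identities with the hypothesis $\nu=\mu_\Gamma$ coming from free-free-Boolean independence of $\Gamma$: the restricted representation joint distribution equals $\nu\circ\iota=\mu_\Gamma\circ\iota=\mu_{\Gamma'}$, which by Definition \ref{bifree-boolean} says precisely that $\Gamma'$ is free-free-Boolean, with the restricted data as witness. The ``hard part'' is not really an obstacle but a bookkeeping point: one must check that precomposition with $\iota$ intertwines $\alpha$ with $\alpha'$ and the analogous operator-side homomorphisms, which rests only on the uniqueness in the universal property of $\bigstar$ and on the trivial fact that a restriction of a homomorphism is a homomorphism; no analytic or combinatorial content enters.
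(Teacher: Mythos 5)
Your proof is correct and follows the same route as the paper: the paper's own argument simply observes that the witnessing homomorphisms $\ell_i, r_i, m_i$ restrict to $B_i', C_i', D_i'$ and declares the result, while you additionally spell out the bookkeeping (the canonical map $\iota$ from the universal property and the identity $\mu_{\Gamma'}=\mu_\Gamma\circ\iota=\nu\circ\iota$) that the paper leaves implicit.
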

\begin{proof}
Since $B'_i\subset B_i$, $C'_i\subset C_i$  and $D'_i\subset D_i$, the homomorphisms $\lambda_i(\ell_i(\cdot)),\,\rho_i(r_i(\cdot)),\,P_i\lambda_i(m_i(\cdot))P_i$ in the preceding definition are well defined on $B_i'$, $C_i'$, $D_i'$ for all $i\in \I$. Therefore, $\Gamma'=\{(B'_i,C'_i,D'_i)\}_{i\in \I}$ is  free-free-Boolean  in $(\A,\phi)$.
\end{proof}

\begin{remark}\normalfont
 Given a probability space $(\A, \phi)$, it is well known that  Boolean independence relation is defined for non-unital algebras in the sense that two Boolean independent  algebras $\B_1$ and $ \B_2$ do not contain the unit of $\A$ or else the expectation is a homomorphism from  $B_1\bigvee\B_2$ to  $\C$, where $B_1\bigvee\B_2$ is the algebra generated by $\B_1,\B_2$.  
Therefore, in the Definition \ref{bifree-boolean}, we do not require the homomorphisms $\lambda_i$ and $\rho_i$ to be unital so that it allows  each triple $(B_i,C_i,D_i)$ of faces to be chose arbitrarily. For instance, in our definitions, $B_i$ and $C_i$ can be Boolean independent,	 monotone independent, etc.  
\end{remark}

By Definition 1.10, Lemma 1.11-1.12 in \cite{Voi1},  it is a routine to show that our free-free-independence relation is independent of choices of representations $\ell_i, r_i$ and $m_i$.  

\begin{proposition}\normalfont Let $\{(B_i,C_i,D_i)\}_{i\in \I}$ be a family of free-free-Boolean independent triples of faces in $(\A,\phi)$, where $\I$ is an index set. Let $\underset{k\in \K}\amalg \I_k$ be a partition of $\I$.  For each $k\in \K$,  let $B_k$ be the unital algebra generated by $\{B_k|i\in \I_k\}$, $C_k$ be the unital algebra generated by $\{C_k|i\in \I_k\}$, $D_k$ be the non-unital algebra generated by $\{D_k|i\in \I_k\}$. Then,  $\{(B_k,C_k,D_k)\}_{i\in \I}$ is a family of free-free-Boolean independent triples of faces in $(\A,\phi)$.
\end{proposition}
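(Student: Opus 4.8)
The plan is to reduce the associativity/grouping statement to the defining representation-theoretic picture. By the construction in Definition \ref{bifree-boolean}, the free-free-Boolean independence of $\{(B_i,C_i,D_i)\}_{i\in \I}$ is witnessed by a family of vector spaces with specified vectors $(\X_i,\mrx_i,\xi_i)_{i\in \I}$ together with homomorphisms $\ell_i,r_i,m_i$, such that the joint distribution of $\{(B_i,C_i,D_i)\}_{i\in \I}$ equals that of $\{(B_i,\lambda_i(\ell_i(\cdot))),(C_i,\rho_i(r_i(\cdot))),(D_i,P_i\lambda_i(m_i(\cdot))P_i)\}_{i\in \I}$ in $(\LL(\X),\phi_\xi)$, where $\X=\underset{i\in\I}{*}(\X_i,\mrx_i,\xi_i)$. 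So it suffices to build, for each $k\in\K$, a single vector space with specified vector $(\Y_k,\mry_k,\eta_k)$ and homomorphisms $L_k:B_k\to\LL(\Y_k)$, $R_k:C_k\to\LL(\Y_k)$, $M_k:D_k\to\LL(\Y_k)$ realizing $(B_k,C_k,D_k)$, in such a way that the reduced free product over $k\in\K$ of the $(\Y_k,\mry_k,\eta_k)$ is canonically identified with $\X$ and under this identification the induced left/right/"compressed" representations of the grouped algebras agree with $\lambda_i\ell_i,\rho_ir_i,P_i\lambda_im_iP_i$ restricted to the generating subalgebras.

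First I would take $(\Y_k,\mry_k,\eta_k)=\underset{i\in\I_k}{*}(\X_i,\mrx_i,\xi_i)$, the reduced free product over the block $\I_k$; this is again a vector space with a specified vector. The associativity of the reduced free product of vector spaces with specified vectors (Voiculescu's construction, as recalled in the excerpt and in \cite{Voi1,Liu3}) gives a canonical linear isomorphism $\X\cong\underset{k\in\K}{*}(\Y_k,\mry_k,\eta_k)$ preserving the specified vectors, hence preserving $\phi_\xi$. Next I would note that for each $i\in\I_k$ the operators $\lambda_i\ell_i(B_i)$ and $\rho_ir_i(C_i)$ already live inside $\LL(\Y_k)\subset\LL(\X)$ via the left/right representations internal to the block, because $\lambda_i$ built inside $\Y_k$ and then promoted via the $\K$-level left representation $\Lambda_k$ coincides with $\lambda_i$ built directly in $\X$ — this is exactly the compatibility of iterated regular representations under associativity. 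Therefore the unital algebra $B_k$ generated by $\{\lambda_i\ell_i(B_i):i\in\I_k\}$ lies in $\Lambda_k(\LL(\Y_k))$, giving $L_k:B_k\to\LL(\Y_k)$ with $\Lambda_k\circ L_k=\mathrm{id}$ on $B_k$; similarly for $R_k$ from the right representation $\mathrm{P}_k$, and for $M_k$ from the compression, using that by Proposition \ref{simple Boolean} and its corollary the $P_i$ ($i\in\I_k$) commute with the block-internal representations and so the compressed operators $P_i\lambda_i(m_i(\cdot))P_i$ and their products, which generate $D_k$, are compatible with the $\K$-level compression $P_{\I_k}$.

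Once these representations are in place, the joint distribution of $\{(B_k,\Lambda_k(L_k(\cdot))),(C_k,\mathrm{P}_k(R_k(\cdot))),(D_k,P_{\I_k}\Lambda_k(M_k(\cdot))P_{\I_k})\}_{k\in\K}$ in $(\LL(\X),\phi_\xi)$ is, after the above identifications, literally the restriction of the joint distribution of $\{(B_i,\lambda_i\ell_i(\cdot)),(C_i,\rho_ir_i(\cdot)),(D_i,P_i\lambda_im_iP_i)\}_{i\in\I}$ to the subalgebras generated over each block; and the latter equals $\mu_{\{(B_i,C_i,D_i)\}}$ by hypothesis. Hence it equals $\mu_{\{(B_k,C_k,D_k)\}}$, which is the required conclusion. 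Finally, by the remark following Definition \ref{bifree-boolean} that the relation is independent of the choice of representations $\ell_i,r_i,m_i$ (Definition 1.10 and Lemmas 1.11–1.12 in \cite{Voi1}), the construction does not depend on the witnessing data.

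The main obstacle is the bookkeeping in the middle step: verifying that the left representation $\lambda_i$ of $\X_i$ constructed \emph{inside} the block product $\Y_k$ and then composed with the $\K$-level left representation of $\LL(\Y_k)$ on $\X$ agrees on the nose with $\lambda_i$ constructed directly on $\X$, and that the analogous statement holds for $\rho_i$ and, most delicately, for the Boolean compressions $P_i(\cdot)P_i$ — one must check that the block-level projection $P_{\I_k}$ interacts with the internal $P_i$'s so that $P_{\I_k}\Lambda_k(\cdot)P_{\I_k}$ restricted to products of the $P_i\lambda_i m_i P_i$ reproduces the $\I$-level compressed operators. This is precisely the content of the associativity of the mixed-independence construction in \cite{Liu3}, so I expect it can be invoked rather than reproved; if a self-contained argument is wanted, it reduces to a direct computation with the isomorphisms $V_i,W_i$ analogous to the proof of Proposition \ref{simple Boolean}.
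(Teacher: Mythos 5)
Your proposal is correct and follows essentially the same route as the paper: reduce to the canonical model $\{(\lambda_i(\LL(\X_i)),\rho_i(\LL(\X_i)),P_i\lambda_i(\LL(\X_i))P_i)\}$ via Remark \ref{Largest triple} and Proposition \ref{subagbras of ffb triples}, form the block-level reduced free products $\Y_k=\underset{i\in\I_k}{*}\X_i$, invoke associativity of the reduced free product (Remark 1.13 in \cite{Voi1}), and observe that the grouped algebras sit inside $\lambda'_k(\LL(\Y_k))$, $\rho'_k(\LL(\Y_k))$ and $P'_k\lambda'_k(\LL(\Y_k))P'_k$, which form a free-free-Boolean family by construction. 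The only cosmetic difference is that the paper closes the argument by citing the subalgebra proposition rather than re-verifying the joint distribution, and the compatibility of the compressions that you flag as the delicate point is exactly the containment $P_i\lambda_i(\LL(\X_i))P_i\subseteq P'_k\lambda'_k(\LL(\Y_k))P'_k$ that the paper asserts.
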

\begin{proof}
By Remark \ref{Largest triple} and Proposition \ref{subagbras of ffb triples}, it is sufficient prove the statement under the assumption that $\{(B_i,C_i,D_i)\}_{i\in \I}$ and $(\A,\phi)$ are $\{(\lambda_i(\LL(\X_i)), \rho_i(\LL(\X_i)),P_i\lambda_i(\LL(\X_i))P_i )\}$ and $(\LL(\X),\phi)$ respectively, where $ (\mathcal{X}_i, \mrx_i,\xi_i)_{i\in \I}$ is a family of vector spaces with specified vectors   $( \mathcal{X}, \mrx,\xi)$ is their reduced free product and $\phi$ is  linear functional  associated with $\xi$ on $\LL(\X)$. 
For each $k\in \K$, let $(\Y_k,\mry_k,\xi_k)$  be the reduced free product of $ (\mathcal{X}_i, \mrx_i,\xi_i)_{i\in \I_k}$. 
Then, by Remark 1.13 in \cite{Voi1},  $( \mathcal{X}, \mrx,\xi)$ is also the reduced free product of   $ (\Y_k, \mry_k,\xi_k)_{k\in \K}$.
 Let  $\lambda'_k$ be the left regular representation of $\LL(Y_k)$ on $\X$, $\rho'_k$ be the right regular representation of $\LL(Y_k)$ on $\X$ and $P'_k$ be the projection from $\X$ onto $\C\xi\oplus \mry_k$.
For each $i\in \I_k$, we have $\lambda_i(\LL(\X_i))\subseteq \lambda'_k(\LL(Y_k))$, $\rho_i(\LL(\X_i))\subseteq \rho'_k(\LL(Y_k))$ and $P_i\lambda_i(\LL(\X_i))P_i\subseteq P'_k\lambda'_k(\LL(Y_k))P_k'$.
Notice that  $\{(\lambda_k'(\LL(\Y_k)), \rho_k'(\LL(\Y_k)),P_k'\lambda_k'(\LL(\Y_k))P_k')\}_{k\in \K}$ is a family of free-free-Boolean independent triples of faces in $(\LL(\X),\phi)$, by Proposition \ref{subagbras of ffb triples}, the proof is finished.

\end{proof}

\begin{definition}\normalfont 
Let $B, C$ be two  subalgebras of a probability space $(A,\phi)$. We say that $B$ is \emph{monotone} to $C$ if  $$\phi(x_1\cdots x_{k-1}x_kx_{k+1}\cdots x_n)=\phi(x_k)\phi(x_1\cdots x_{k-1}x_{k+1}\cdots x_n)$$
whenever $x_{k-1},x_{k+1}\in C$ and $x_k\in B$.
\end{definition}

Given a family of algebras $\{A_i\}_{i\in \I}$, we denote by $\underset{i\in\I}\bigvee A_i$ the nonunital algebra generated by $\{A_i\}_{i\in \I}$. By Proposition 2.16 in \cite{Voi1} and Proposition 3.8 in \cite{Liu3}, we have the following result.

\begin{proposition}\normalfont 
Let $\{(B_i,C_i,D_i)\}_{i\in \I}$ be a family of free-free-Boolean independent triples of faces in $(\A,\phi)$, where $\I$ is an index set and  $L\subset \I$. Then
\begin{itemize}
\item the subalgebra $\underset{i\in L}{\bigvee} B_i$ is monotone to  $\underset{i\in I\setminus L}{\bigvee} D_i$ in $(\A,\phi)$,
\item the subalgebra $\underset{i\in L}{\bigvee} C_i$ is monotone to  $\underset{i\in I\setminus L}{\bigvee} D_i$ in $(\A,\phi)$,
\item the subalgebra $\underset{i\in L}{\bigvee} B_i$ and   $\underset{i\in I\setminus L}{\bigvee} C_i$ are classically independent in $(\A,\phi)$.
\end{itemize} 
\end{proposition}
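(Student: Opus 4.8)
The plan is to read off each of the three items from a two-faced sub-structure hidden inside a triple of free-free-Boolean faces, and then to quote the corresponding known result. By Definition \ref{bifree-boolean}, if $\{(B_i,C_i,D_i)\}_{i\in \I}$ is free-free-Boolean independent then $\{(B_i,C_i)\}_{i\in \I}$ is bi-free independent and $\{(B_i,D_i)\}_{i\in \I}$ is free-Boolean independent. Moreover, since free-free-Boolean independence does not depend on the choice of the representations $\ell_i,r_i,m_i$, we may pass to the universal model of Remark \ref{Largest triple}; there $D_i=P_i\lambda_i(\cdot)P_i=P_i\rho_i(\cdot)P_i$ by the corollary to Proposition \ref{simple Boolean} and Proposition 2.4 in \cite{Liu3}, so $\{(C_i,D_i)\}_{i\in \I}$ is free-Boolean independent as well, being the right-handed mirror of the $B$--$D$ family. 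Thus we have, on the full index set $\I$, one bi-free family and two free-Boolean families built from the faces.

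For the first two items I would invoke Proposition 3.8 in \cite{Liu3} applied to the free-Boolean families $\{(B_i,D_i)\}_{i\in \I}$ and $\{(C_i,D_i)\}_{i\in \I}$: for any $L\subset \I$, the algebra generated by the ``free'' faces over $L$ is monotone to the algebra generated by the ``Boolean'' faces over $\I\setminus L$, which is exactly the assertion that $\underset{i\in L}{\bigvee} B_i$ and $\underset{i\in L}{\bigvee} C_i$ are each monotone to $\underset{i\in \I\setminus L}{\bigvee} D_i$. If one prefers to reduce to a two-point index set first, the grouping proposition proved just above replaces $\{(B_i,C_i,D_i)\}_{i\in \I}$ by a free-free-Boolean pair of triples indexed by $\{L,\I\setminus L\}$, and the statements about the generated algebras then follow by passing to subalgebras, using Proposition \ref{subagbras of ffb triples} if needed.

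For the third item I would invoke Proposition 2.16 in \cite{Voi1} applied to the bi-free family $\{(B_i,C_i)\}_{i\in \I}$: the left algebra attached to the index set $L$ and the right algebra attached to the disjoint index set $\I\setminus L$ commute and are classically independent, i.e.\ $\underset{i\in L}{\bigvee} B_i$ and $\underset{i\in \I\setminus L}{\bigvee} C_i$ are classically independent in $(\A,\phi)$. This gives all three items. The one step that is not a bare citation is the appearance of $\{(C_i,D_i)\}$ as a free-Boolean family: Definition \ref{bifree-boolean} only records this for the $B$--$D$ pairs, so one genuinely needs to pass to the universal model and use the identity $P_i\lambda_i(\cdot)P_i=P_i\rho_i(\cdot)P_i$; everything else is routine once the bi-free and free-Boolean sub-families are identified, including the bookkeeping of unital versus non-unital generated algebras in the grouping reduction, which is handled exactly as in the preceding proposition.
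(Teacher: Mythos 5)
Your proposal is correct and follows exactly the route the paper intends: the paper offers no written proof beyond the sentence ``By Proposition 2.16 in \cite{Voi1} and Proposition 3.8 in \cite{Liu3}, we have the following result,'' and you cite precisely those two results for the three items. Your additional observation that the $C$--$D$ pairs also form a free-Boolean family, justified via the universal model and the identity $P_i\lambda_i(\cdot)P_i=P_i\rho_i(\cdot)P_i$ from Proposition 2.4 in \cite{Liu3}, supplies a detail the paper leaves implicit and is a welcome clarification rather than a deviation.
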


\begin{definition} \normalfont  Let $a=\{(b_i)_{i\in \I},(c_j)_{j\in \J}, (d_k)_{k\in\K}\}$ and $a'=\{(b'_i)_{i\in \I},(c'_j)_{j\in \J}, (d'_k)_{k\in\K}\}$ be a pair of  three faced families of  random variables in a probability space  $(\A, \phi) $. 
We say that $a$ and $a'$ are \emph{free-free-Boolean independent} if $(B,C,D)$ and $(B',C',D')$ are free-free-Boolean independent, where $B,B',C,C',$ and $D'$ are nonunital algebras generated by $(b_i)_{i\in \I}$, $(b'_i)_{i\in \I}$, $(c_j)_{j\in \J}$, $(c'_j)_{j\in \J}$, $(d_k)_{k\in \K}$ and $(d'_k)_{k\in \K}$ respectively. 
If $a=\{(b_i)_{i\in \I},(c_j)_{j\in \J}, (d_k)_{k\in\K}\}$ and $a'=\{(b'_i)_{i\in \I},(c'_j)_{j\in \J}, (d'_k)_{k\in\K}\}$ are free-free-Boolean independent, then the joint distribution of $\{(b_i+b'_i)_{i\in \I},(c_j+c'_j)_{j\in \J},(d_k+d'_k)_{k\in\K}\}$ is determined. This defines an additive free-free-Boolean convolution $\boxplus\boxplus\uplus$ on distributions of three-faced families of random variables with a triple of index sets $(\I,\J,\K)$
$$\mu_{\{(b_i+b'_i)_{i\in \I},(c_j+c'_j)_{j\in \J},(d_k+d'_k)_{k\in\K}\}}=\mu_{\{(b_i)_{i\in \I},(c_j)_{j\in \J}, (d_k)_{k\in\K}\}}\boxplus\boxplus\uplus \mu_{\{(b'_i)_{i\in \I},(c'_j)_{j\in \J}, (d'_k)_{k\in\K}\}}.$$
Similarly,  we can define   multiplicative,  additive-additive-multiplicative, multiplicative-additive-additive free-Boolean convolutions, etc.
\end{definition}

\section{Interval Bi-noncrossing partitions}

In this section, we introduce  combinatorial tools for characterizing free-free-Boolean triples of faces. In the following context, we  denote by $[n]$ the set $\{1,...,n\}$.    

\begin{definition}\label{partition}\normalfont Let $S$ be a totally ordered set:
\begin{itemize}
\item[1.] A \emph{partition} $\pi$ of a set $S$ is a collection of disjoint, nonempty sets $V_1,...,V_r$ whose union is $S$.  $V_1,...,V_r$ are called \emph{blocks} of $\pi$. The collection of all partitions of $S$ will be denoted by $P(S)$.
\item[2.] Given two partitions $\pi$ and $\sigma$, we say $\pi\leq \sigma$ if each block of $\pi$ is contained in a block of $\sigma$. This relation  is called the \emph{reversed refinement order}.
\item[3.]A partition $\pi\in P(S)$ is a \emph{noncrossing} partition if there is no quadruple $(s_1,s_2,r_1,r_2)$ such that $s_1<r_1<s_2<r_2$, $s_1,s_2\in V$, $r_1,r_2\in W$ and $V,W$ are two different blocks of $\pi$.  
\item[4.]A partition $\pi\in P(S)$ is an \emph{interval} partition if there is no triple $(s_1,s_2,r)$ such that $s_1<r<s_2$, $s_1,s_2\in V$, $r\in W$ and $V,W$ are two different blocks of $\pi$.  
\item[5.] A block $V$ of a partition  $\pi\in P(S)$ is said to be an \emph{inner} block if there is block $W\in \pi$ and  $s,t\in W$ such that $s<v<t$ for all $v\in V$. A block is an \emph{exterior} block if it is not inner.
\item[6.] Let $\omega:[k]\rightarrow I$.  We denote by ker $\omega$ the element of $P([k])$ whose blocks are sets $\omega^{-1}(i)$, $i\in I$.
\end{itemize}
\end{definition}

\begin{definition} \normalfont 
Given $\chi:[n] \rightarrow \{\ell,c,r\}$ a map from $[n]$ to  the set of letters $\{\ell, c, r\}$ with
$$\chi^{-1}\{\ell, c\}=\{i_1<i_2<\cdots< i_p\}\,\,\,\,\,\text{and}\,\,\,\,\, \chi^{-1}\{r\}=\{i_{p+1}>i_{p+2}>\cdots>i_n\}, $$
$\prec_{\chi}$ is a \emph{total order} on $[n]$ defined by
$$i_1\prec_{\chi}i_2\prec_{\chi}\cdots\prec_{\chi}i_p\prec_{\chi}i_{p+1}\prec_{\chi}i_{p+2}\prec_{\chi}\cdots<i_n.$$
Let $\pi\in P(n)$. $\pi$ is said to be a \emph{$\chi$-noncrossing} partition if $\pi$  is a  noncrossing partition with respect to the order $\prec_{\chi}$. We denote by $NC(\chi)$ the set of noncrossing partitions of $[n]$ with respect to the order $\prec_{\chi}$. $\pi$ is said to be  a \emph{$\chi$-interval} partition if $i,j,k$ are in the same block whenever   $i<j<k$, $i\sim k$, $\chi(j)=c$.
$\pi$ is said to be an \emph{interval-bi-noncrossing} partition with respect to $\chi$ if $\pi$ is  $\chi$-noncrossing  and  $\chi$-interval.  We denote by $IBNC(\chi)$ the set of all interval-bi-noncrossing partitions with respect to $\chi$.
\end{definition}

Recall that the family of bi-noncrossing partitions in \cite{CNS} is defined as follows.

\begin{definition} \normalfont 
Given $\bar{\chi}:[n] \rightarrow \{\ell,r\}$ a map from $[n]$ to  the set of letters $\{\ell, c, r\}$ with
$$\bar\chi^{-1}\{\ell\}=\{i_1<i_2<\cdots< i_p\}\,\,\,\,\,\text{and}\,\,\,\,\, \bar\chi^{-1}\{r\}=\{i_{p+1}>i_{p+2}>\cdots>i_n\}, $$
$\prec_{\bar\chi}$ is a \emph{total order} on $[n]$ defined by
$$i_1\prec_{\chi}i_2\prec_{\chi}\cdots\prec_{\chi}i_p\prec_{\chi}i_{p+1}\prec_{\chi}i_{p+2}\prec_{\chi}\cdots<i_n.$$
A partition $\pi$ is said to be a \emph{bi-noncrossing} partition with respect $\bar\chi$ if $\pi$  is a  noncrossing partition with respect to the order $\prec_{\chi}$. We denote by $BNC(\bar\chi)$ the set of bi-noncrossing partitions with respect to $\bar\chi$.
\end{definition}

\begin{remark}\normalfont  If $\chi^{-1}(c)=\emptyset$, then $NC(\chi)$ is the set of bi-noncrossing partitions $BNC(\chi)$.
\end{remark}

\begin{lemma}\normalfont
Let $\chi:[n] \rightarrow \{\ell,c,r\}$. If $\chi^{-1}(c)\subset\{1,n\}$, then $IBNC(\chi)=NC(\chi)$.
\end{lemma}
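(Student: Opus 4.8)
The plan is to show the two inclusions $IBNC(\chi)\subseteq NC(\chi)$ and $NC(\chi)\subseteq IBNC(\chi)$, of which the first is trivial by definition (every interval-bi-noncrossing partition is in particular $\chi$-noncrossing). So the real content is the reverse inclusion: under the hypothesis $\chi^{-1}(c)\subseteq\{1,n\}$, every $\pi\in NC(\chi)$ is automatically $\chi$-interval, i.e.\ satisfies the condition that $i,j,k$ lie in a common block whenever $i<j<k$, $i\sim_\pi k$, and $\chi(j)=c$.

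First I would observe that the $\chi$-interval condition only has bite at positions $j$ with $\chi(j)=c$. Under the hypothesis, the only candidates for such a $j$ are $j=1$ and $j=n$. But the condition requires $i<j<k$, so there must exist a position strictly below $j$ and a position strictly above $j$; for $j=1$ there is no $i<1$, and for $j=n$ there is no $k>n$. Hence the premise $i<j<k$ with $\chi(j)=c$ is never satisfiable, the $\chi$-interval condition is vacuously true for every partition of $[n]$, and in particular for every $\pi\in NC(\chi)$. Therefore $NC(\chi)\subseteq IBNC(\chi)$.

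Combining the two inclusions gives $IBNC(\chi)=NC(\chi)$. I would also note explicitly that this covers the degenerate but relevant sub-case $\chi^{-1}(c)=\emptyset$, which by the preceding Remark recovers the identity $BNC(\chi)=NC(\chi)=IBNC(\chi)$, so the lemma is a genuine generalization of that remark rather than a separate phenomenon.

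There is essentially no obstacle here: the argument is a vacuity check on the quantifier $i<j<k$ against the constraint $j\in\{1,n\}$. The only point requiring a moment's care is to make sure the definition of $\chi$-interval is read with the \emph{strict} inequalities $i<j<k$ (as written in Definition of $IBNC(\chi)$), since with non-strict inequalities the statement would fail; I would phrase the proof so that this is transparent.
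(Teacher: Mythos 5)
Your proposal is correct and follows the same reasoning as the paper: since $\chi(j)=c$ forces $j\in\{1,n\}$ and there is no $i<1$ nor $k>n$, the $\chi$-interval condition is vacuous, so every $\chi$-noncrossing partition is interval-bi-noncrossing. Your added remarks on the trivial inclusion and the strictness of the inequalities are fine but not needed beyond what the paper states.
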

\begin{proof}
Since there is no $i,j\in[n]$ such that either $i<1<j$ or $i<n<j$, all partitions are $\chi$-interval partitions. Therefore, all $\chi$-noncrossing partitions are interval-bi-noncrossing partitions with respect to $\chi$. The proof is complete. 
\end{proof}

Given a  $\chi$, we can associate it a diagram as follows.  For each  $k=1,\cdots,n$, we place a node labeled $k$ at the position $(-1,n-k)$ if $k\in \chi^{-1}\{\ell,c\}$ and the position $(1,n-k)$ if $k\in \chi^{-1}\{r\}$.   We use white balls to denote nodes $k$  if $\chi(k)=c$ and draw a horizontal dashed lines through white balls.

\begin{example}\normalfont  Let $n=8$,  $\chi^{-1}(c)=\{4,6\}$, $\chi^{-1}(\ell)=\{1,2,8\}$ and $\chi^{-1}(r)=\{3,5,6\}$. The diagram associated with $\chi$ is the following.
$$\begin{tikzpicture}[baseline]
		\draw[thick, dashed] (-1,0.5) -- (-1, -4) -- (1,-4) -- (1,0.5);
      \draw[thick, dashed]  (-1, -1.5) -- (1,-1.5) ;
       \draw[thick, dashed] (-1, -2.5) -- (1,-2.5) ;
		\def\colours{{0,0,0,1,0,1,0,0,0}}
		\def\sidez{{-1,-1,1,-1,1,-1,1,-1}}
		\foreach \y in {0,...,7} {
			\pgfmathtruncatemacro{\nodename}{\y+1}
			\pgfmathtruncatemacro{\sd}{\sidez[\y]}
			\pgfmathparse{\palette[\colours[\y]]}
			\def\clr{\pgfmathresult}
			\node (ball\nodename) [draw, shade, circle, ball color=\clr, inner sep=0.08cm] at (\sd, -\y*0.5) {};
			\ifthenelse{\sd=1}{\node[right] at (\sd, -\y*0.5) {\nodename}}
					{\node[left] at (\sd, -\y*0.5) {\nodename}};
		}
		\node[right] at (-1.1,-4.5) {Diagram of $\chi$};
	\end{tikzpicture}
	$$
\end{example}

Roughly speaking, a partition $\pi$ is a $\chi-$noncrossing  partition if one can connect blocks of $\pi$ noncrossingly inside the above diagram. A partition $\pi$ is a $\chi-$interval if  each block of $\pi$ which goes across a dashed horizontal line contains the node on it.

\begin{example}\normalfont
Let $\chi$ be as in the preceding example. Given partitions $\pi_1=\{\{1,8\},\{2,3,4,5,6,7\}\}$, $\pi_2=\{\{1,2\},\{3,5,7,8\},\{4,6\}\}$ and  $\pi_3=\{\{1,2\},\{3,4,6,8\},\{5\},\{7\}\}$, they can be illustrated as follows. 
$$\begin{tikzpicture}[baseline]
		\draw[thick, dashed] (-1,0.5) -- (-1, -4) -- (1,-4) -- (1,0.5);
      \draw[thick, dashed]  (-1, -1.5) -- (1,-1.5) ;
       \draw[thick, dashed] (-1, -2.5) -- (1,-2.5) ;
		\def\colours{{0,0,0,1,0,1,0,0,0}}
		\def\sidez{{-1,-1,1,-1,1,-1,1,-1}}
		\foreach \y in {0,...,7} {
			\pgfmathtruncatemacro{\nodename}{\y+1}
			\pgfmathtruncatemacro{\sd}{\sidez[\y]}
			\pgfmathparse{\palette[\colours[\y]]}
			\def\clr{\pgfmathresult}
			\node (ball\nodename) [draw, shade, circle, ball color=\clr, inner sep=0.08cm] at (\sd, -\y*0.5) {};
			\ifthenelse{\sd=1}{\node[right] at (\sd, -\y*0.5) {\nodename}}
					{\node[left] at (\sd, -\y*0.5) {\nodename}};
		}
		\draw[rline,thin] (ball2)--(0.25,-0.5);
		\draw[rline,thin] (ball3)--(0.25,-1);
		\draw[rline,thin] (ball4)--(0.25,-1.5);
		\draw[rline,thin] (ball5)--(0.25,-2);
		\draw[rline,thin] (ball6)--(0.25,-2.5);
		\draw[rline,thin] (ball7)--(0.25,-3);
		\draw[rline, thin] (0.25,-0.5) -- (0.25,-3);
		
	    \draw[rline, thick] (ball1)--(-0.25,-0);
		\draw[rline,thick] (ball8)--(-0.25,-3.5);
		\draw[rline, thick] (-0.25,-0) -- (-0.25,-3.5);

		\node[right] at (-1.1,-4.5) {Diagram of $\pi_1$};
	\end{tikzpicture}
\quad\quad	
\begin{tikzpicture}[baseline]
		\draw[thick, dashed] (-1,0.5) -- (-1, -4) -- (1,-4) -- (1,0.5);
      \draw[thick, dashed]  (-1, -1.5) -- (1,-1.5) ;
       \draw[thick, dashed] (-1, -2.5) -- (1,-2.5) ;
		\def\colours{{0,0,0,1,0,1,0,0,0}}
		\def\sidez{{-1,-1,1,-1,1,-1,1,-1}}
		\foreach \y in {0,...,7} {
			\pgfmathtruncatemacro{\nodename}{\y+1}
			\pgfmathtruncatemacro{\sd}{\sidez[\y]}
			\pgfmathparse{\palette[\colours[\y]]}
			\def\clr{\pgfmathresult}
			\node (ball\nodename) [draw, shade, circle, ball color=\clr, inner sep=0.08cm] at (\sd, -\y*0.5) {};
			\ifthenelse{\sd=1}{\node[right] at (\sd, -\y*0.5) {\nodename}}
					{\node[left] at (\sd, -\y*0.5) {\nodename}};
		}
		\draw[rline,thin] (ball2)--(-0.25,-0.5);
		\draw[rline,thin] (ball3)--(0.25,-1);
		\draw[rline,thin] (ball4)--(-0.25,-1.5);
		\draw[rline,thin] (ball5)--(0.25,-2);
		\draw[rline,thin] (ball6)--(-0.25,-2.5);
		\draw[rline,thin] (ball7)--(0.25,-3);

	    \draw[rline, thick] (ball1)--(-0.25,-0);
		\draw[rline,thick] (ball8)--(0.25,-3.5);

		\draw[rline, thick] (0.25,-1) -- (0.25,-3.5); 
		\draw[rline, thick] (-0.25,-1.5) -- (-0.25,-2.5);
		\draw[rline, thick] (-0.25,-0) -- (-0.25,-0.5);

		\node[right] at (-1.1,-4.5) {Diagram of $\pi_2$};
	\end{tikzpicture}	
\quad\quad		
	\begin{tikzpicture}[baseline]
		\draw[thick, dashed] (-1,0.5) -- (-1, -4) -- (1,-4) -- (1,0.5);
      \draw[thick, dashed]  (-1, -1.5) -- (1,-1.5) ;
       \draw[thick, dashed] (-1, -2.5) -- (1,-2.5) ;
		\def\colours{{0,0,0,1,0,1,0,0,0}}
		\def\sidez{{-1,-1,1,-1,1,-1,1,-1}}
		\foreach \y in {0,...,7} {
			\pgfmathtruncatemacro{\nodename}{\y+1}
			\pgfmathtruncatemacro{\sd}{\sidez[\y]}
			\pgfmathparse{\palette[\colours[\y]]}
			\def\clr{\pgfmathresult}
			\node (ball\nodename) [draw, shade, circle, ball color=\clr, inner sep=0.08cm] at (\sd, -\y*0.5) {};
			\ifthenelse{\sd=1}{\node[right] at (\sd, -\y*0.5) {\nodename}}
					{\node[left] at (\sd, -\y*0.5) {\nodename}};
		}
		\draw[rline,thin] (ball2)--(-0.25,-0.5);
		\draw[rline,thin] (ball3)--(-0.25,-1);
		\draw[rline,thin] (ball4)--(-0.25,-1.5);
		\draw[rline,thin] (ball5)--(0.25,-2);
		\draw[rline,thin] (ball6)--(-0.25,-2.5);
		\draw[rline,thin] (ball7)--(0.25,-3);
				
	    \draw[rline, thick] (ball1)--(-0.25,-0);
		\draw[rline,thick] (ball8)--(-0.25,-3.5);
		
		\draw[rline, thick] (-0.25,-1) -- (-0.25,-3.5);
		\draw[rline, thick] (-0.25,-0) -- (-0.25,-0.5);
		
		\node[right] at (-1.1,-4.5) {Diagram of $\pi_3$};
	\end{tikzpicture}
$$

$\pi_1$ is a noncrossing partition but it is neither a $\chi$-interval partition nor a $\chi$-noncrossing partition. $\pi_2$ is  a  $\chi$-noncrossing partition but not a $\chi$-interval partition. $\pi_3$ is  an interval-bi-noncrossing partition with respect to $\chi$. 
\end{example}

Now, we turn to study  relations between $IBNC(\chi)$ and $NC(\chi)$.
We denote by $[n_1,n_2]$ the set $\{n_1,n_1+1,\cdots,n_2\}$.  
  Suppose that    $\chi^{-1}\{c\}\cap [2,n-1]=\{l_1<\cdots< l_m\}$,  let $l_0=1$ and $l_{m+1}=n$. We  define the following maps :
\begin{itemize}
\item For each $i=1,\cdots,m+1$, let $\alpha_i: IBNC(\chi)\rightarrow P([l_{i-1},l_{i}])$ such that $\alpha_i(\pi)$ is the restriction of $\pi$ to $[l_{i-1},l_{i}]$ and  $\chi_i$ be the restriction of $\chi$ to $[l_{i-1},l_{i}]$.
\item Let $\alpha': IBNC(\chi)\rightarrow P([l_1,n])$ such that $\alpha'(\pi)$ is the restriction of $\pi$ to the  set $[l_1,n]$ and let $\chi'$ be the restriction of $\chi$ to the set $[l_1,...,n]$.
\end{itemize}

Since restrictions of partitions to do not turn any exterior block into inner, the restrictions $\alpha_1(\pi)$ and $\alpha'(\pi)$ of  a $\chi$-interval partition $\pi$ are $\chi_1$-interval and $\chi'$-interval respectively. On the other hand, restrictions of partitions  do not change the order $\prec_\chi$ , thus the restrictions $\alpha_1(\pi)$ and $\alpha'(\pi)$ of  a $\chi$-noncrossing partition $\pi$ are $\chi_1$-noncrossing and $\chi'$-noncrossing respectively. Therefore, the range of $\alpha_1$ is contained in  $IBNC(\chi_1)$ and the range of $\alpha'$ is contained in  $IBNC(\chi')$.  Notice that $\chi_1$ can be $c$ only at $1$ and $l_1$, it follows that  $IBNC(\chi_1)=NC(\chi_1).$

\begin{lemma}\label{injective}
 Let  $\alpha_1':IBNC(\chi)\rightarrow IBNC(\chi_1)\times IBNC(\chi')$ such that 
$$\alpha'_1(\pi)=(\alpha_1(\pi), \alpha'(\pi)). $$ Then $\alpha_1'$ is an injective map.
\end{lemma}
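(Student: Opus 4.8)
The plan is to use the $\chi$-interval property at the node $l_1$, where $\chi(l_1)=c$, in order to recover $\pi$ from the pair $(\alpha_1(\pi),\alpha'(\pi))$; this reconstruction is exactly what injectivity of $\alpha_1'$ amounts to.

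First I would record a structural fact about an arbitrary $\pi\in IBNC(\chi)$. Suppose a block $V$ of $\pi$ contains an element $i<l_1$ and an element $k>l_1$. Then the triple $(i,l_1,k)$ satisfies $i<l_1<k$, $i\sim_\pi k$, and $\chi(l_1)=c$, so the $\chi$-interval condition forces $i,l_1,k$ to lie in the same block, i.e. $l_1\in V$. Writing $V_0$ for the (unique) block of $\pi$ containing $l_1$, it follows that every block of $\pi$ other than $V_0$ is contained either in $[1,l_1-1]$ or in $[l_1+1,n]$. This is the only place where the hypothesis that $\pi$ is $\chi$-interval is used; the remainder is bookkeeping about restrictions of partitions.

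Next I would spell out how the blocks of $\pi$ are encoded in $\alpha_1(\pi)$ and $\alpha'(\pi)$. Restricting $\pi$ to $[1,l_1]$ leaves unchanged each block contained in $[1,l_1-1]$, so the blocks of $\pi$ contained in $[1,l_1-1]$ are precisely the blocks of $\alpha_1(\pi)$ that do not contain $l_1$; symmetrically, the blocks of $\pi$ contained in $[l_1+1,n]$ are precisely the blocks of $\alpha'(\pi)$ that do not contain $l_1$. Finally, $V_0=(V_0\cap[1,l_1])\cup(V_0\cap[l_1,n])$, where $V_0\cap[1,l_1]$ is the block of $\alpha_1(\pi)$ containing $l_1$ and $V_0\cap[l_1,n]$ is the block of $\alpha'(\pi)$ containing $l_1$. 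Hence the complete list of blocks of $\pi$ is determined by the pair $(\alpha_1(\pi),\alpha'(\pi))$.

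To conclude, if $\pi,\sigma\in IBNC(\chi)$ satisfy $\alpha_1(\pi)=\alpha_1(\sigma)$ and $\alpha'(\pi)=\alpha'(\sigma)$, then by the previous paragraph $\pi$ and $\sigma$ have exactly the same blocks, so $\pi=\sigma$; thus $\alpha_1'$ is injective. I do not expect a genuine obstacle: the single substantive point is the observation that, at the white node $l_1$, no block can straddle $l_1$ without containing it, and once that is in place the reconstruction is immediate.
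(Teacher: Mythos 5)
Your proof is correct, and it takes a somewhat different route from the paper's. The one substantive input --- that the $\chi$-interval condition at the white node $l_1$ prevents a block from meeting both $[1,l_1-1]$ and $[l_1+1,n]$ unless it contains $l_1$ --- is the same fact the paper uses, but you deploy it as a positive reconstruction while the paper argues by contraposition: given $\pi_1\neq\pi_2$ it picks a block $V$ of $\pi_1$ that is not a block of $\pi_2$ and a block $W$ of $\pi_2$ meeting it, and runs a case analysis on whether $l_1$ lies in $V$, in $W$, or in neither, checking in each case that one of the two restrictions must differ. Your version instead exhibits an explicit inverse on the image: the blocks of $\pi$ are the blocks of $\alpha_1(\pi)$ and of $\alpha'(\pi)$ not containing $l_1$, together with the union of the two blocks that do contain $l_1$. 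This is cleaner and buys something extra, since that assembly formula is exactly the map used in the surjectivity argument (Lemma \ref{Surjective}) to build an element of $IBNC(\chi)$ from a pair $(\pi_1,\pi')$; your argument makes explicit that $\alpha_1'$ has this assembly as a two-sided inverse, leaving only the verification that the assembled partition lies in $IBNC(\chi)$ as the content of surjectivity. One step is slightly compressed: in ``the blocks of $\pi$ contained in $[1,l_1-1]$ are precisely the blocks of $\alpha_1(\pi)$ not containing $l_1$,'' the reverse inclusion needs your structural fact --- a block of $\alpha_1(\pi)$ not containing $l_1$ equals $W\cap[1,l_1]$ for a block $W$ of $\pi$ with $l_1\notin W$, and only the straddling observation guarantees $W\subseteq[1,l_1-1]$, hence $W\cap[1,l_1]=W$ --- but the needed fact is already established in your first paragraph, so this is a matter of spelling it out rather than a gap.
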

\begin{proof}

Let $\pi_1, \pi_2\in IBNC(\chi)$ such that $\pi_1\neq \pi_2$. Then, there exists a block $V\in \pi_1$ such that $V\not\in \pi_2$ and there exists a block $W\in \pi_2$ such that $V\cap W\neq \emptyset$.  We assume that $a\in V\cap W$, then  we have the following  cases:

1. If $l_1\in V$ and $l_1\not\in W$,  then $V\cap[1,l_1]\neq W\cap[1,l_1]$. Thus $\alpha_1(\pi_1)\neq \alpha_1(\pi_2)$ which implies that $\alpha_1'(\pi_1)\neq \alpha_1'(\pi_1)$.

2 If $l_1\in V$ and $l_1\in W$, then $V\cap[1,l_1]\neq W\cap[1,l_1]$ or $V\cap[l_1,n]\neq W\cap[l_1,n]$. Therefore, $\alpha_1(\pi_1)\neq \alpha_1(\pi_2)$ or $\alpha'(\pi_1)\neq \alpha'(\pi_2)$. 

3. Suppose that  $l_1$ is not contained in $V$ and $W$.  If $a>l_1$, then $V\subset [l_1,n]$ and $W\subset [l_1,n]$ since $\pi_1$ and $\pi_2$ are $\chi$-interval partitions. In this case, $V$ is a block of $\alpha'(\pi_1)$ and $W$ is a block of $\alpha'(\pi_2)$, thus $\alpha'(\pi_1)\neq \alpha'(\pi_2)$. Similarly, if $a<l_1$, then $\alpha_1(\pi_1)\neq \alpha_1(\pi_2)$.

The proof is complete.
\end{proof}

In the following context, we  assume that 
$$\chi^{-1}\{\ell, c\}=\{i_1<i_2<\cdots< i_p\}\,\,\,\,\,\text{and}\,\,\,\,\, \chi^{-1}\{r\}=\{i_{p+1}>i_{p+2}>\cdots>i_n\}, $$ 
$$\chi^{-1}\{\ell, c\}\cap [1,l_1]=\{i_1<i_2<\cdots< i_s=l_1\}\,\,\,\,\,\text{and}\,\,\,\,\, \chi^{-1}\{r\}\cap [1,l_1]=\{i_{t}>i_{t+1}>\cdots>i_n\},$$ 
$$\chi^{-1}\{\ell, c\}\cap [l_1,n]=\{l_1=i_s<i_2<\cdots< i_p\}\,\,\,\,\,\text{and}\,\,\,\,\, \chi^{-1}\{r\}\cap [l_1,n]=\{i_{p+1}>i_{p+2}>\cdots>i_{t-1}\}.$$ 
The diagrams of $\chi$, $\chi_1$ and $\chi'$ can be simply illustrated as follows.
$$\begin{tikzpicture}[baseline]
		\draw[thick, dashed] (-1,0) -- (-1, -3) -- (1,-3) -- (1,0);
      \draw[thick, dashed]  (-2, -1) -- (2,-1) ;
   \node [draw, shade, circle, ball color=white, inner sep=0.08cm] at (-1, -1) {};  
    \node[left] at (-1,-1) {$l_1$};
         \node[left] at (-1,-1.5) {$i_s$};
      \node[left] at (-1,-2) {$\vdots$};
         \node[left] at (-1,-2.5) {$i_p$};
   \node[right] at (1,-2.5) {$i_{p+1}$};
        \node[right] at (1,-2) {$\vdots$};
           \node[right] at (1,-1.5) {$i_{t-1}$};
           \node[right] at (1,-0.5) {$i_{t}$};		
			\node[right] at (-1.1,-4.5) {Diagram of $\chi$};
	\end{tikzpicture}
\quad\quad
	\begin{tikzpicture}[baseline]
		\draw[thick, dashed] (-1,0) -- (-1, -1.5) -- (1,-1.5) -- (1,0);
      \draw[thick, dashed]  (-2, -1) -- (2,-1) ;
   \node [draw, shade, circle, ball color=white, inner sep=0.08cm] at (-1, -1) {};  
    \node[left] at (-1,-1) {$l_1$};
         \node[left] at (-1,-0.5) {$i_{s-1}$};	
           \node[right] at (1,-0.5) {$i_{t}$};		
			\node[right] at (-1.1,-4.5) {Diagram of $\chi_1$};
	\end{tikzpicture}
	\quad\quad
	\begin{tikzpicture}[baseline]
		\draw[thick, dashed] (-1,0) -- (-1, -3) -- (1,-3) -- (1,0);
   
   \node [draw, shade, circle, ball color=white, inner sep=0.08cm] at (-1, -1) {};  
    \node[left] at (-1,-1) {$l_1$};
         \node[left] at (-1,-1.5) {$i_s$};
      \node[left] at (-1,-2) {$\vdots$};
         \node[left] at (-1,-2.5) {$i_p$};
   \node[right] at (1,-2.5) {$i_{p+1}$};
        \node[right] at (1,-2) {$\vdots$};
           \node[right] at (1,-1.5) {$i_{t-1}$};
   
			\node[right] at (-1.1,-4.5) {Diagram of $\chi'$};
	\end{tikzpicture}
$$

\begin{lemma}\label{Surjective}
 Let  $\alpha_1':IBNC(\chi)\rightarrow IBNC(\chi_1)\times IBNC(\chi')$ such that 
$$\alpha'_1(\pi)=(\alpha_1(\pi), \alpha'(\pi)). $$Then $\alpha_1'$ is a surjective map.
\end{lemma}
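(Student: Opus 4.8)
The plan is to construct, given a pair $(\sigma_1,\sigma')\in IBNC(\chi_1)\times IBNC(\chi')$, an explicit preimage $\pi\in IBNC(\chi)$ under $\alpha_1'$, and then verify that $\pi$ is indeed interval-bi-noncrossing with respect to $\chi$. The natural candidate is to glue $\sigma_1$ and $\sigma'$ along the common node $l_1$: since $l_1=i_s\in[1,l_1]\cap[l_1,n]$ appears in both $\sigma_1$ and $\sigma'$, let $U_1$ be the block of $\sigma_1$ containing $l_1$ and $U'$ the block of $\sigma'$ containing $l_1$; declare the block of $\pi$ through $l_1$ to be $U_1\cup U'$, and take all remaining blocks of $\sigma_1$ and of $\sigma'$ unchanged. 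By construction $\alpha_1(\pi)=\sigma_1$ and $\alpha'(\pi)=\sigma'$, so surjectivity follows once we check $\pi\in IBNC(\chi)$.

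The verification splits into the $\chi$-interval condition and the $\chi$-noncrossing condition. For the $\chi$-interval condition: suppose $i<j<k$ with $i\sim_\pi k$ and $\chi(j)=c$. The white nodes of $\chi$ strictly between $1$ and $n$ are exactly $l_1,\dots,l_m$, but within the present two-step decomposition the only white node relevant here is $l_1$ (the nodes $l_2,\dots,l_m$ sit inside the $\chi'$-block and are handled recursively by $\sigma'\in IBNC(\chi')$); the node $1$ and $n$, if white, are covered by the lemma of the excerpt since nothing lies strictly outside them. So the only new case is $j=l_1$ with $i<l_1<k$ and $i\sim_\pi k$. If $i$ and $k$ both lie in $[1,l_1]$ or both in $[l_1,n]$, then $i\sim_{\sigma_1}k$ or $i\sim_{\sigma'}k$ respectively, forcing $i\sim l_1\sim k$ by the interval property of $\sigma_1$ or $\sigma'$, so $j=l_1$ is in that block. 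If $i<l_1<k$ with $i\in[1,l_1)$ and $k\in(l_1,n]$, then $i\sim_\pi k$ forces, by the gluing recipe, that the common block meets both $[1,l_1]$ and $[l_1,n]$, hence contains $U_1$ and $U'$, hence contains $l_1$; so again $j=l_1$ is in that block. Any other white $j$ strictly between two equivalent points lies entirely in $[1,l_1]$ or entirely in $[l_1,n]$, and is handled by $\sigma_1$ or $\sigma'$ being $\chi_1$-, resp. $\chi'$-interval.

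For the $\chi$-noncrossing condition one argues with the total order $\prec_\chi$. The key point, which I would state as the main obstacle, is that $\prec_\chi$ restricted to $[1,l_1]$ is $\prec_{\chi_1}$ and restricted to $[l_1,n]$ is $\prec_{\chi'}$, and moreover $l_1$ is the $\prec_{\chi_1}$-maximum of the $\{\ell,c\}$-indices of $[1,l_1]$ while the $\{r\}$-indices of $[1,l_1]$ are $\prec_\chi$-larger still; so in the global order $\prec_\chi$ the set $[1,l_1]$ and the set $[l_1,n]$ interleave only through the shared point $l_1$ and the $r$-tail. Concretely, one checks that every element of $[1,l_1]\setminus\{l_1\}$ and every element of $[l_1,n]\setminus\{l_1\}$ are separated by $l_1$ in $\prec_\chi$ except for the $r$-indices, and the $r$-indices of both pieces together still form a $\prec_\chi$-interval on which $\sigma'$ is already noncrossing. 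Hence a would-be crossing quadruple $a\prec_\chi b\prec_\chi c\prec_\chi d$ of $\pi$ with $a\sim_\pi c$, $b\sim_\pi d$ in different blocks cannot straddle the $l_1$-cut: one shows it must lie entirely within (the $\prec_\chi$-image of) $[1,l_1]$ or entirely within $[l_1,n]$ — using that the glued block $U_1\cup U'$ is itself $\prec_\chi$-"interval-compatible" around $l_1$ — and therefore contradicts the $\chi_1$-noncrossing-ness of $\sigma_1$ or the $\chi'$-noncrossing-ness of $\sigma'$. I expect this order-bookkeeping around $l_1$ — precisely describing how $\prec_\chi$ glues $\prec_{\chi_1}$ and $\prec_{\chi'}$ and why no crossing can cross the seam — to be the delicate step; the rest is routine case analysis. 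Combining the two verifications shows $\pi\in IBNC(\chi)$ with $\alpha_1'(\pi)=(\sigma_1,\sigma')$, proving surjectivity.
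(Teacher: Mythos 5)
Your construction is exactly the one the paper uses: glue the two partitions along the blocks containing $l_1$ and keep every other block, so that $\alpha_1(\pi)=\sigma_1$ and $\alpha'(\pi)=\sigma'$ by construction. Your verification of the $\chi$-interval condition is essentially complete and matches the paper's. The gap is in the $\chi$-noncrossing verification, which is the heart of the lemma and which you explicitly defer as ``the delicate step.'' Your summary claim --- that a quadruple $a\prec_\chi b\prec_\chi c\prec_\chi d$ with $a\sim_\pi c$ and $b\sim_\pi d$ in \emph{different} blocks cannot straddle the $l_1$-cut, hence lies wholly in $[1,l_1]$ or wholly in $[l_1,n]$ --- is not something that follows from general ``interval-compatibility'' of the glued block; it is precisely the content that has to be proved, and the way it actually closes is not by pushing the quadruple to one side. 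In the straddling configurations (say $l_1$ lies between $a$ and $c$ in the natural order, so $a,c\in W\cup W'$, while $b,d$ both lie on one side of $l_1$), one must show that $b$ and $d$ are forced into the \emph{same} glued block $W\cup W'$, i.e.\ the two blocks coincide and there is no crossing after all. The paper does this through three sub-cases (its cases 3a, 3b, 3c, plus the symmetric case 4), each of which replaces one of $a,c$ by $l_1$ and then invokes the $\chi_1$-noncrossing property of $\sigma_1$ or the $\chi'$-noncrossing property of $\sigma'$ on a reordered quadruple such as $l_1\prec_{\chi'} b\prec_{\chi'} c\prec_{\chi'} d$. This requires the explicit bookkeeping of how $\prec_\chi$ interleaves the two pieces (the $\{\ell,c\}$-part of $[1,l_1]$ comes first, then all of $[l_1,n]$ as a contiguous $\prec_\chi$-interval, then the $r$-part of $[1,l_1)$ at the end), which you gesture at but do not pin down; your remark that the $r$-indices of both pieces form an interval ``on which $\sigma'$ is already noncrossing'' is not right, since $\sigma'$ says nothing about the $r$-indices of $[1,l_1)$.

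So: right construction, right interval argument, right identification of where the difficulty sits, but the noncrossing case analysis that constitutes most of the paper's proof is asserted rather than carried out, and the one-sentence mechanism you offer for it (``no crossing can straddle the seam, so reduce to one side'') does not match how the straddling cases are actually resolved. To complete the proof you need to run the sub-cases in which $l_1$ separates one pair of the quadruple in the natural order while the other pair sits on a single side, and show in each that all four points collapse into $W\cup W'$.
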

\begin{proof}
Let $\pi_1\in IBNC(\chi)$ and $\pi'\in IBNC(\chi')$. 
Suppose that $\pi_1=\{V_1,\cdots,V_s, W\}$ where $W$ contains $l_1$ and $\pi'=\{V_1',\cdots,V_t', W'\}$ where $W'$ contains $l_1$. Let $\pi=\{V_1,\cdots,V_s, V_1'\cdots,V_t', W\cup W'\}$.  Then, we need to show that $\pi\in IBNC(\chi)$. 

Firstly, we  show that $\pi$ is a $\chi$-interval partition.  Suppose that $i<l_k<j$ for some $k\geq 1$ and $i,j$ are in a same block of $\pi$. Notice that $j>l_1$, thus $j\in[l_1,n]$. We have the following two cases:

1. If $i\geq l_1$, then $i, l_k, j$ must be in a same block of $\pi'$ since $\pi'\in IBNC(\chi')$. In this case, $i, l_k, j$  are in a same block of $\pi$. 

2. If $i<l_1$, then $i\in W\cup W' $ which is the only possible block of $\pi$ contains such a pair $i,j$. Therefore, $l_1$ and $j$ are in $W'$. If $k=1$, we are done.   If $k>1$, then $l_k\in W'$ since $\pi'\in IBNC(\chi')$ and $l_1,j\in W'$. Thus $i,l_k,j$ are in a same block of $\pi$ which shows that $\pi$ is a $\chi$-interval partition.

Secondly, we turn to show that $\pi$ is a $\chi$-noncrossing partition. Let $i\prec_\chi j\prec_\chi k\prec_\chi l$ such that $i,k$ are in a same block of $\pi$ and $j,l$ are in a same block of $\pi$. We consider the following cases:

1.  If $l_1\leq \min\{i,j,k,l\}$(or  $l_1\geq \max\{i,j,k,l\}$), then $i,j,k,l$ must be in a same of $\pi_1$ (rep. $\pi'$) since $\pi_1\in IBNC(\chi_1)$ (rep. $\pi'\in IBNC(\chi')$). In this case, $i,j,k,l$ are in a same block of $\pi$.  

2.  If $l_1$ lies between $i, j$ with respect to the natural order, then $i,j$ are contained in the block $W\cup W'$. Therefore $i,j,k,l$ are contained in the block  $W\cup W'$. The same if $l_1$ lies between $i,l$ or $k,j$ or $k,l$.

3.  If $l_1$ lies between $i, k$ , then $i,k\in W\cup W'$. If  $l_1$ also lies between $j,l$, then $j,l\in W\cup W'$.  We are done.  Suppose that   $l_1$ does not lie between $j,l$, then $j,l< l_1$ or $j,l> l_1$.  Then, we have the following cases:

a) $j,l>l_1$, then we have $l_1\prec_{\chi} j$.  Notice that $j\prec_{\chi} k\prec_{\chi} l\prec_{\chi} $, we have  $l_1\prec_{\chi}  k\prec_{\chi} i_{t-1}$. Therefore, $l_1\prec_{\chi} j\prec_{\chi}k\prec_{\chi}l$. However,  $l_1,k$ are in a same block and $j,l$ are in a same block.  It follows that $l_1,j,k,l$ are in a same block since $\pi'$ is a $\chi'$-noncrossing partition. Therefore, $i,j,k,l$ are in the same block $W\cup W'$ of $\pi$.
$$\begin{tikzpicture}[baseline]
\draw[thick, dashed] (-1,0) -- (-1, -1);
\draw[thick, dashed] (1,-1) -- (1,0);
      \draw[thick, dashed]  (-2, -1) -- (2,-1) ;
\draw[thick, dashed] (-1,-1) arc (180:360:10mm);
   \node [draw, shade, circle, ball color=white, inner sep=0.08cm] at (-1, -1) {};  
    \node[left] at (-1,-1) {$l_1$};
         \node[left] at (-1,-0.5) {$i$};	
           \node[left] at (-0.707,-1.707) {$j$};		
           \node[below] at (0,-2) {$k$};		
           \node[right] at (0.707,-1.707) {$l$};		
 
			\node[right] at (-1.1,-3) {Case a)};
	\end{tikzpicture}
$$

b) If $j,l<l_1$ and $ j\prec_{\chi} l_1 $, then $i<l_1$ since $i\prec_{\chi} j$. Since $l_1$ lies between  since $j,k$, we have $k\geq l_2$. Since $l<l_1$ and  $k\prec_{\chi} l$,  $l\in\chi^{-1}(r)\cap[1,l_1]$. Therefore, we have $i\prec_{\chi} j\prec_{\chi} l_1\prec_{\chi} l$.  Since $i,l_1$ are in the same block $W$, $j,l$ are in a same block of $\pi_1$  and $\pi_1$ is a a $\chi_1$-noncrossing partition, $i,j,l_1,l$ are in $W$.  It follows that $ i,j,l_1,k,l$ are contained in $W\cup W' $.

$$\begin{tikzpicture}[baseline]
\draw[thick, dashed] (-1,1) -- (-1, -1);
\draw[thick, dashed] (1,-1) -- (1,1);
      \draw[thick, dashed]  (-2, -1) -- (2,-1) ;
\draw[thick, dashed] (-1,-1) arc (180:360:10mm);
   \node [draw, shade, circle, ball color=white, inner sep=0.08cm] at (-1, -1) {};  
    \node[left] at (-1,-1) {$l_1$};
    \node[left] at (-1,0.5) {$i$};
         \node[left] at (-1,-0.5) {$j$};	
       
           \node[below] at (0,-2) {$k$};		
            \node[right] at (1,-0.5)  {$l$};		
 
			\node[right] at (-1.1,-3) {Case b)};
	\end{tikzpicture}
$$
c) If $j,l<l_1$ and $ j\prec_{\chi} l_1 $, then $k\in \chi^{-1}\{r\}\cap [1,l_1]=\{i_{t}>i_{p+2}>\cdots>i_n\} $. Therefore,  $k<l_1$ and $i\geq l_1$. Since $i\prec_{\chi} j$, we have $l_1\prec_{\chi} j.$ Thus $l_1\prec_{\chi}j\prec_{\chi}k\prec_{\chi}l$.   $l_1,j,k,l$ are in $W$ since $\pi_1$ is a a $\chi_1$-noncrossing partition. In this case, we have that $ i,j,l_1,k,l$ are contained in $W\cup W' $.
$$\begin{tikzpicture}[baseline]
\draw[thick, dashed] (-1,1) -- (-1, -1);
\draw[thick, dashed] (1,-1) -- (1,1);
      \draw[thick, dashed]  (-2, -1) -- (2,-1) ;
\draw[thick, dashed] (-1,-1) arc (180:360:10mm);
   \node [draw, shade, circle, ball color=white, inner sep=0.08cm] at (-1, -1) {};  
    \node[left] at (-1,-1) {$l_1$};
    \node[right] at (1,0.5) {$l$};
         \node[right] at (1,0) {$k$};	
       
           \node[below] at (0,-2) {$i$};		
            \node[right] at (1,-0.5)  {$j$};		
 
			\node[right] at (-1.1,-3) {Case c)};
	\end{tikzpicture}
$$

4. Similarly, if $l_1$ lies between $j,l$ we also have that $i,j,k,l$ are in a same block of $\pi.$
Therefore, $\pi$ is a $\chi$-noncrossing partition.
The proof is complete.

\end{proof}

\begin{proposition}\label{lattice isomorphism} \normalfont 
 Let  $\alpha_1':IBNC(\chi)\rightarrow NC(\chi_1)\times IBNC(\chi')$ such that 
$$\alpha'_1(\pi)=(\alpha_1(\pi), \alpha'(\pi)) $$
and $\alpha:IBNC(\chi)\rightarrow NC(\chi_1)\times NC(\chi_2)\times\cdots \times NC(\chi_{m+1})$ such that 
$$\alpha(\pi)=(\alpha_1(\pi), \cdots,\alpha_{m+1}(\pi)).$$
Then $\alpha'_1$ and $\alpha$ are lattice isomorphisms, $IBNC(\chi)$ is a lattice with respect to the reverse refinement order $\leq$ on partitions.
\end{proposition}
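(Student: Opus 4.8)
The plan is to upgrade Lemmas \ref{injective} and \ref{Surjective} from a bijection to a poset isomorphism $\alpha_1':IBNC(\chi)\to NC(\chi_1)\times IBNC(\chi')$ (the codomain carrying the componentwise reversed refinement order), and then to deduce both the lattice property of $IBNC(\chi)$ and the statement for $\alpha$ by induction on $m=|\chi^{-1}\{c\}\cap[2,n-1]|$. That $\alpha_1'$ is a bijection onto this set is exactly Lemmas \ref{injective} and \ref{Surjective}, where we use $IBNC(\chi_1)=NC(\chi_1)$ (as observed just before Lemma \ref{injective}). Hence only two points remain: that $\alpha_1'$ and $(\alpha_1')^{-1}$ preserve order, and the induction.

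Order-preservation of $\alpha_1'$ is immediate, since each of $\alpha_1,\alpha'$ is a restriction of partitions to a subset $S\subseteq[n]$: if every block of $\pi$ lies in a block of $\sigma$, then intersecting all blocks with $S$ shows every block of $\pi|_S$ lies in a block of $\sigma|_S$, so $\pi\le\sigma$ implies $\alpha_1(\pi)\le\alpha_1(\sigma)$ and $\alpha'(\pi)\le\alpha'(\sigma)$. For the inverse I would use the explicit reconstruction from the proof of Lemma \ref{Surjective}: writing $\pi=(\alpha_1')^{-1}(\pi_1,\pi')$ with $\pi_1=\{V_1,\dots,V_s,W\}$, $\pi'=\{V_1',\dots,V_t',W'\}$ where $W\ni l_1$ and $W'\ni l_1$, the blocks of $\pi$ are $V_1,\dots,V_s,V_1',\dots,V_t'$ together with $W\cup W'$. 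Suppose $\alpha_1'(\pi)\le\alpha_1'(\sigma)$, say $\sigma=(\alpha_1')^{-1}(\sigma_1,\sigma')$ with $\pi_1\le\sigma_1$ and $\pi'\le\sigma'$. Each $V_j\subseteq[1,l_1-1]$ is a block of $\pi_1$, hence lies in a block of $\sigma_1=\sigma|_{[1,l_1]}$, hence in a block of $\sigma$; similarly each $V_j'\subseteq[l_1+1,n]$ lies in a block of $\sigma$. Finally $W$ lies in the block of $\sigma_1$ containing it, which lies in a block of $\sigma$ containing $l_1$, and $W'$ likewise lies in a block of $\sigma$ containing $l_1$; as $\sigma$ is a partition, these blocks coincide, so $W\cup W'$ lies in a single block of $\sigma$. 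Therefore $\pi\le\sigma$, and $\alpha_1'$ is a poset isomorphism.

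For the induction, the base case $m=0$ is the content of the lemma asserting $IBNC(\chi)=NC(\chi)$ whenever $\chi^{-1}(c)\subset\{1,n\}$, and $NC(\chi)$ is the classical noncrossing partition lattice of the finite totally ordered set $([n],\prec_\chi)$. For the inductive step, observe that $\chi'$ has only $m-1$ interior $c$-nodes, since $l_1$ has become an endpoint of $[l_1,n]$; hence $IBNC(\chi')$ is a lattice by the inductive hypothesis, while $NC(\chi_1)$ is again a classical $NC$-lattice. A finite product of lattices is a lattice (with componentwise meet and join), and a poset isomorphism transports the lattice operations, so $IBNC(\chi)$ is a lattice and $\alpha_1'$ is a lattice isomorphism. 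Iterating $\alpha_1'$ on the $IBNC(\chi')$-factor and applying the inductive hypothesis again exhibits $\alpha$ as a composition of lattice isomorphisms onto $NC(\chi_1)\times\cdots\times NC(\chi_{m+1})$, which finishes the proof. The only step demanding genuine care is the order-preservation of $(\alpha_1')^{-1}$; the rest (monotonicity of restriction, products of lattices being lattices, order isomorphisms being lattice isomorphisms) is standard.
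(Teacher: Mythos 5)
Your proof is correct and follows essentially the same route as the paper: order-preservation of the restriction maps $\alpha_1,\alpha'$, bijectivity of $\alpha_1'$ via the injectivity and surjectivity lemmas, and induction on the number of interior $c$-nodes to pass from $\alpha_1'$ to $\alpha$ and to get the lattice property. The one place you go beyond the paper is in explicitly verifying that $(\alpha_1')^{-1}$ is order-preserving via the reconstruction $\{V_1,\dots,V_s,V_1',\dots,V_t',W\cup W'\}$ — the paper only invokes order-preservation of restrictions plus bijectivity, which alone does not yield a poset isomorphism — so that added check is a genuine and welcome completion of the argument.
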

\begin{proof}
The statement for $\alpha$ is an induction argument from $\alpha_1'$.  In \cite{Liu3}, it is shown that restrictions of partitions on subintervals preserve the reversed refinement order.   Therefore,  we only need to show that $\alpha_1'$ is a bijection which  follows  Lemma \ref{injective} and Lemma \ref{Surjective}. The proof is complete
\end{proof}

\begin{proposition}\normalfont   Let $\pi=\{V_1,\cdots,V_t\}\in IBNC(\chi)$ and  $\sigma$  be a partition of $[n]$ such that $\sigma\leq \pi$ with respect to the reversed refinement order. Then, $\sigma\in IBNC(\chi)$ if and only if $\sigma|V_s\in IBNC(\chi|_{V_s})$ for all $s=1,\cdots,t$.
\end{proposition}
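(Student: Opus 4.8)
The plan is to prove both implications by \emph{localizing} any possible defect of $\sigma$ to a single block of $\pi$. First I would record the routine bookkeeping already used in the proof of Proposition~\ref{lattice isomorphism}: for a block $V_s=\{a_1<\cdots<a_m\}$ of $\pi$, the order-preserving identification $a_q\leftrightarrow q$ of $V_s$ with $[m]$ sends the natural order of $V_s$ to that of $[m]$ and sends the restriction of $\prec_{\chi}$ to $V_s$ to the order $\prec_{\chi|_{V_s}}$, where $\chi|_{V_s}(q)=\chi(a_q)$. Moreover, since $\sigma\leq\pi$, every block of $\sigma$ meeting $V_s$ is contained in $V_s$, so $\sigma|_{V_s}$ is simply the collection of blocks of $\sigma$ lying inside $V_s$. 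Consequently, under this identification, ``$\sigma|_{V_s}$ is $\chi|_{V_s}$-noncrossing'' means there is no crossing of $\sigma$ in the order $\prec_{\chi}$ using only elements of $V_s$, and ``$\sigma|_{V_s}$ is $\chi|_{V_s}$-interval'' means that whenever $i<j<k$ lie in $V_s$ with $i\sim_{\sigma}k$ and $\chi(j)=c$, these three points lie in one block of $\sigma$.

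For the forward implication, assume $\sigma\in IBNC(\chi)$ and fix a block $V_s$ of $\pi$. If $i,j,k,l\in V_s$ satisfy $i\prec_{\chi}j\prec_{\chi}k\prec_{\chi}l$, $i\sim_{\sigma}k$ and $j\sim_{\sigma}l$, then since $\sigma$ is $\chi$-noncrossing all four lie in one block of $\sigma$, hence in one block of $\sigma|_{V_s}$; so $\sigma|_{V_s}$ is $\chi|_{V_s}$-noncrossing. If $i<j<k$ lie in $V_s$ with $i\sim_{\sigma}k$ and $\chi(j)=c$, then $\chi$-intervality of $\sigma$ puts $i,j,k$ in one block of $\sigma$, hence of $\sigma|_{V_s}$; so $\sigma|_{V_s}$ is $\chi|_{V_s}$-interval. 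This half uses only that $V_s\subseteq[n]$, not that $\pi\in IBNC(\chi)$.

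For the converse, assume $\sigma|_{V_s}\in IBNC(\chi|_{V_s})$ for every $s$. Suppose $i\prec_{\chi}j\prec_{\chi}k\prec_{\chi}l$ with $i\sim_{\sigma}k$ and $j\sim_{\sigma}l$. Since $\sigma\leq\pi$ we get $i\sim_{\pi}k$ and $j\sim_{\pi}l$, and because $\pi$ is $\chi$-noncrossing the pair $\{i,k\}$ and the pair $\{j,l\}$ cannot lie in different blocks of $\pi$; hence $i,j,k,l$ all lie in a single block $V_s$ of $\pi$. Then $\chi|_{V_s}$-noncrossingness of $\sigma|_{V_s}$ forces them into one block of $\sigma|_{V_s}$, hence of $\sigma$, so $\sigma$ has no such crossing. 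Similarly, if $i<j<k$ with $i\sim_{\sigma}k$ and $\chi(j)=c$, then $i\sim_{\pi}k$, and since $\pi$ is $\chi$-interval the points $i,j,k$ lie in a single block $V_s$ of $\pi$; then $\chi|_{V_s}$-intervality of $\sigma|_{V_s}$ puts $i,j,k$ in one block of $\sigma|_{V_s}$, hence of $\sigma$. Therefore $\sigma\in IBNC(\chi)$.

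The only genuinely delicate point is the localization step in the converse: one must check that a crossing (resp.\ an interval violation) of $\sigma$ which survives into $\pi$ really does collapse to a single block of $\pi$. This is precisely where the two defining properties of $IBNC(\chi)$ are used separately — $\chi$-noncrossingness of $\pi$ to absorb crossings and $\chi$-intervality of $\pi$ to absorb interval violations — together with the order- and colour-compatibility of restriction to $V_s$ noted at the outset; both facts are mild and mirror the subinterval arguments already carried out above.
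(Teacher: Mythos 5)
Your proof is correct. It follows the same decomposition as the paper — treat the $\chi$-noncrossing condition and the $\chi$-interval condition separately, and show that each one localizes to the blocks of $\pi$ — but where the paper simply cites Proposition 5.8 of \cite{Liu3} for the interval half and Theorem 9.29 of \cite{NS} (applied to the total order $\prec_\chi$) for the noncrossing half, you prove both localization facts from scratch. The two ingredients you make explicit are exactly what those citations encapsulate: (i) restriction to a block $V_s$ is compatible with both the natural order and with $\prec_\chi$, so the defining conditions for $IBNC(\chi|_{V_s})$ transport correctly; and (ii) in the converse direction, any crossing of $\sigma$ collapses into a single block of $\pi$ because $\pi$ is $\chi$-noncrossing, and any interval violation of $\sigma$ collapses into a single block of $\pi$ because $\pi$ is $\chi$-interval. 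The payoff of your version is that the proposition becomes self-contained and does not depend on the reader checking that the cited results apply verbatim in the $\prec_\chi$ setting; the cost is only a modest amount of extra bookkeeping.
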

\begin{proof}
By  Proposition 5.8 in \cite{Liu3},  $\sigma$ is a $\chi$-interval partition  if and only if  $\sigma|V_s$ is a $\chi|_{V_s}$-interval partition  for all $s=1,\cdots,t$. If we consider $\prec_{\chi}$ be the only order on $[n]$, by Theorem 9.29 in \cite{NS}, $\sigma$ is a $\chi$-noncrossing partition  if and only if  $\sigma|V_s$ is a $\chi|_{V_s}$-noncrossing partition  for all $s=1,\cdots,t$. The statement follows.
\end{proof}

We see that the set of interval-bi-noncrossing partitions, which are finer than a given interval-bi-noncrossing partition $\pi$, is uniquely determined by the IBNC-partitions with respect to  the restrictions of $\chi$ to the blocks of $\pi$. 
Therefore, we have the following decomposition property.

\begin{proposition}\label{canonical isomorphism}\normalfont 
 Let $\pi=\{V_1,\cdots,V_t\}\in IBNC(\chi)$.  Then 
 $$[0_n, \pi]\cong IBNC(\chi|_{V_1})\times\cdots\times IBNC(\chi|{V_s}),$$
 where  $0_n$ is the partition of $[n]$ into $n$ blocks, and $[0_n,\pi]$ is the interval $\{\sigma\in IBNC(\chi): 0_n\leq \sigma\leq \pi\}.$
\end{proposition}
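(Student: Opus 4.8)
The plan is to deduce this decomposition from the already-established structural results, in particular Proposition \ref{lattice isomorphism} and the preceding Proposition characterizing when a refinement $\sigma \leq \pi$ lies in $IBNC(\chi)$. First I would observe that the interval $[0_n,\pi]$ consists precisely of those partitions $\sigma$ of $[n]$ with $\sigma \leq \pi$, and by the preceding Proposition such a $\sigma$ lies in $IBNC(\chi)$ if and only if $\sigma|_{V_s} \in IBNC(\chi|_{V_s})$ for every $s = 1,\dots,t$. This immediately suggests the candidate bijection
$$\Phi: [0_n,\pi] \longrightarrow IBNC(\chi|_{V_1}) \times \cdots \times IBNC(\chi|_{V_t}), \qquad \Phi(\sigma) = (\sigma|_{V_1},\dots,\sigma|_{V_t}).$$

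Next I would check that $\Phi$ is well-defined (its image genuinely lands in the stated product, which is exactly the content of the preceding Proposition since $0_n \leq \sigma \leq \pi$), that it is injective (a partition $\sigma$ with $\sigma \leq \pi$ is completely recovered from its restrictions to the blocks $V_s$, because each block of such a $\sigma$ is entirely contained in a single $V_s$), and that it is surjective: given $\sigma_s \in IBNC(\chi|_{V_s})$ for each $s$, the partition $\sigma$ of $[n]$ whose blocks are the union of the blocks of all the $\sigma_s$ satisfies $\sigma \leq \pi$, and by the preceding Proposition again $\sigma \in IBNC(\chi)$, hence $\sigma \in [0_n,\pi]$ and $\Phi(\sigma) = (\sigma_1,\dots,\sigma_t)$. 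Finally I would note that $\Phi$ and $\Phi^{-1}$ are order-preserving for the reversed refinement order, since $\sigma \leq \tau$ within $[0_n,\pi]$ holds if and only if $\sigma|_{V_s} \leq \tau|_{V_s}$ for all $s$ — this is the standard fact (used already in \cite{Liu3} and cited in Proposition \ref{lattice isomorphism}) that restriction to a fixed subset preserves and reflects the refinement order when both partitions refine $\pi$.

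The main obstacle, such as it is, is bookkeeping rather than mathematics: one must be slightly careful that the blocks $V_s$ of $\pi$ are not intervals of $[n]$ in general but arbitrary subsets, so the restriction $\chi|_{V_s}$ and the notion $IBNC(\chi|_{V_s})$ must be interpreted with $V_s$ carrying the order $\prec_\chi$ restricted from $[n]$ (equivalently, the reindexing that identifies $V_s$ order-isomorphically with an initial segment), and the $\chi$-interval condition on $V_s$ must be read using the positions of the white nodes lying inside $V_s$; but this is precisely the reading under which the preceding Proposition was proved, so no new work is needed. Thus the proof reduces to assembling these pieces: well-definedness and bijectivity from the characterization of refinements in $IBNC(\chi)$, and the order isomorphism from the restriction-preserves-order lemma.

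\begin{proof}
By the preceding proposition, a partition $\sigma$ of $[n]$ with $0_n \le \sigma \le \pi$ belongs to $IBNC(\chi)$ if and only if $\sigma|_{V_s} \in IBNC(\chi|_{V_s})$ for all $s = 1,\dots,t$. Hence the map
$$\Phi\colon [0_n,\pi] \to IBNC(\chi|_{V_1}) \times \cdots \times IBNC(\chi|_{V_t}), \qquad \Phi(\sigma) = (\sigma|_{V_1},\dots,\sigma|_{V_t})$$
is well defined. It is injective because every block of a partition $\sigma \le \pi$ lies entirely in a single $V_s$, so $\sigma$ is the union of its restrictions $\sigma|_{V_s}$. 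It is surjective: given $(\sigma_1,\dots,\sigma_t)$ with $\sigma_s \in IBNC(\chi|_{V_s})$, let $\sigma$ be the partition of $[n]$ whose blocks are those of $\sigma_1,\dots,\sigma_t$; then $\sigma \le \pi$, and by the preceding proposition $\sigma \in IBNC(\chi)$, so $\sigma \in [0_n,\pi]$ and $\Phi(\sigma) = (\sigma_1,\dots,\sigma_t)$. Finally, for $\sigma,\tau \in [0_n,\pi]$ we have $\sigma \le \tau$ if and only if $\sigma|_{V_s} \le \tau|_{V_s}$ for all $s$, since restriction to a fixed block preserves and reflects the reversed refinement order on partitions that refine $\pi$ (cf. \cite{Liu3}). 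Therefore $\Phi$ is an isomorphism of lattices.
\end{proof}
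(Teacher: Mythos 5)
Your proof is correct and follows exactly the route the paper intends: the paper states this proposition without proof, presenting it as an immediate consequence of the preceding proposition characterizing when a refinement $\sigma\leq\pi$ lies in $IBNC(\chi)$. You have simply written out the bijection, its inverse, and the order-compatibility that the paper leaves implicit.
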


\section{M\"obius functions}
In this section,  we  study some properties of  M\"obius functions on $IBNC(\chi)$.  

Let $L$ be a finite lattice. We denote by
$$L^{(2)}=\{(a,b)|b,a\in L,\,\, a\leq b\}$$
the set of ordered pairs of elements in $L$.\\
Given two functions $f,g:L^{(2)}\rightarrow \C$,  their convolution $f*g$ is given by:
$$f*g(a,b)=\sum\limits_{ \substack{c\in L\\ a\leq c\leq b}} f(a,c)g(c,b).$$
It is shown by Rota \cite{Rota}, the following three special functions on $L^{(2)}$ always exist.
\begin{itemize}
\item The \emph{delta} function defined as 
$$\delta(a,b)=\left\{\begin{array}{ll}
1,\,\,\,\, &\text{if}\,\, a=b,\\
0,&\text{otherwise.}
\end{array}\right.
$$
\item The \emph{zeta} function $\zeta$  defined as
$$\zeta(a,b)=\left\{\begin{array}{ll}
1,\,\,\,\, &\text{if}\,\, a\leq b,\\
0,&\text{otherwise.}
\end{array}\right.
$$
\item By Proposition 1 in \cite{Rota}, there is a  function $\mu$ on $L^{(2)}$ such that 
$$\mu*\zeta=\zeta*\mu=\delta. $$
$\mu$ is called the \emph{M\"obius function} of $L^{(2)}$.
\end{itemize}

Here, $\delta$ is the unit  with respect to the convolution $*$ and $ \mu$ is the inverse of $\zeta$ with respect to $*$.  Given lattices $L_1,\cdots, L_m$, their direct product $L=L_1\times\cdots\times L_m$ is also a lattice with respect to the order such that
$(a_1,...,a_m)\leq(b_1,...,b_m)$ if and only if $a_i\leq b_i$ for all $i$. 
 It is obvious that $L^{(2)}=L_1^{(2)}\times\cdots\times L_m^{(2)}$.
For each $i$, let $f_i$ be a $\C$-valued function on $L_i^{(2)}$. The product $f=\prod\limits_{i=1}^mf_i$ is a function on $L^{(2)}$ defined as follows:
$$f((a_1,...,a_m),(b_1,...,b_m))= \prod\limits_{i=1}^mf_i(a_i,b_i),$$
for all  $(a_1,...,a_m),(b_1,...,b_m)\in L^{(2)}$. The following result is Lemma 6.1 in \cite{Liu3}.

\begin{lemma}\label{product of Mobius function} \normalfont  Let $L_1,\cdots,L_m$ be finite lattices. For each $i$, let $\delta_i$, $\zeta_i$, $\mu_i$ be the delta function, the zeta function and the M\"obius function on $L_i^{(2)}$  respectively. Then $\bar\delta=\prod\limits_{i=1}^m\delta_i$, $\bar\zeta=\prod\limits_{i=1}^m\zeta_i$ and $\bar\mu=\prod\limits_{i=1}^m\mu_i$ are  the delta function, the zeta function and the M\"obius function on $L_1^{(2)}\times\cdots\times L_m^{(2)}$ respectively. 
\end{lemma}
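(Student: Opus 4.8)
The plan is to verify directly that $\bar\delta$, $\bar\zeta$ are the delta and zeta functions on the product lattice, and then that $\bar\mu$ satisfies the defining relation $\bar\mu * \bar\zeta = \bar\zeta * \bar\mu = \bar\delta$, invoking uniqueness of the M\"obius function (Proposition 1 in \cite{Rota}) to conclude $\bar\mu$ is the M\"obius function of $L_1^{(2)}\times\cdots\times L_m^{(2)}$. The first two points are immediate: for $a=(a_1,\dots,a_m)$ and $b=(b_1,\dots,b_m)$ with $a\le b$, we have $a=b$ iff $a_i=b_i$ for all $i$, so $\prod_i \delta_i(a_i,b_i)=1$ exactly when $a=b$ and $0$ otherwise; and $\prod_i \zeta_i(a_i,b_i)=1$ identically on the set of comparable pairs, which is the zeta function. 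So the content is in the third point.

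The key computation is that convolution on the product lattice factors through the coordinates. Concretely, for $a\le b$ in $L$, the interval $[a,b]$ in $L=L_1\times\cdots\times L_m$ is exactly $[a_1,b_1]\times\cdots\times[a_m,b_m]$, so that a sum over $c$ with $a\le c\le b$ is a product of sums over $c_i$ with $a_i\le c_i\le b_i$. Hence for product functions $f=\prod_i f_i$ and $g=\prod_i g_i$ one gets
$$(f*g)(a,b)=\sum_{a\le c\le b}\prod_{i=1}^m f_i(a_i,c_i)g_i(c_i,b_i)=\prod_{i=1}^m\left(\sum_{a_i\le c_i\le b_i} f_i(a_i,c_i)g_i(c_i,b_i)\right)=\prod_{i=1}^m (f_i*g_i)(a_i,b_i),$$
i.e. $\left(\prod_i f_i\right)*\left(\prod_i g_i\right)=\prod_i (f_i*g_i)$. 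Applying this with $f_i=\mu_i$, $g_i=\zeta_i$ gives $\bar\mu*\bar\zeta=\prod_i(\mu_i*\zeta_i)=\prod_i\delta_i=\bar\delta$, and symmetrically $\bar\zeta*\bar\mu=\bar\delta$. Since the M\"obius function of a finite lattice is the unique two-sided inverse of $\zeta$ under $*$, it follows that $\bar\mu$ is the M\"obius function of the product, completing the proof.

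I do not expect a serious obstacle here; the only point requiring any care is the identification $[a,b]=\prod_i[a_i,b_i]$ and the consequent interchange of sum and product in the convolution formula, which is the "factorization of convolution over a direct product of posets" — a standard fact, but worth writing out since the whole lemma hinges on it. (Alternatively, one could cite that this is precisely Lemma 6.1 of \cite{Liu3}, as the excerpt notes, and simply record the statement; but giving the short self-contained argument above is cleaner.)
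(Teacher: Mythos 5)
Your proof is correct and complete; the factorization of the interval $[a,b]=\prod_i[a_i,b_i]$ and hence of the convolution is exactly the standard argument. The paper itself gives no proof of this lemma (it simply cites Lemma~6.1 of \cite{Liu3}), so your self-contained verification is consistent with, and fills in, what the paper takes for granted.
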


Therefore, we have the following result.

\begin{proposition}\normalfont  Let $\alpha:IBNC(\chi)\rightarrow NC(\chi_1)\times NC(\chi_2)\times\cdots \times NC(\chi_{m+1})$ be the lattice isomorphism in Proposition \ref{lattice isomorphism}. 
Let $\bar\delta$, $\bar\zeta$, $\bar\mu$ be the delta function, the zeta function and the M\"obius function of $NC(\chi_1)\times NC(\chi_2)\times\cdots \times NC(\chi_{m+1})$.  Then $\delta_{IBNC(\chi)}=\bar\delta\circ\alpha$, $\zeta_{IBNC(\chi)}=\bar\zeta\circ\alpha$, $\mu_{IBNC(\chi)}=\bar\mu\circ\alpha$ are the delta function, the zeta function and the M\"obius function on ${IBNC(\chi)}$.
\end{proposition}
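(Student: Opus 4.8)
The plan is to deduce this proposition directly from the lattice isomorphism of Proposition~\ref{lattice isomorphism} together with Lemma~\ref{product of Mobius function}. The key observation is that all three special functions ($\delta$, $\zeta$, $\mu$) on a finite lattice are intrinsic: they depend only on the poset structure, so any order isomorphism transports them. Since $\alpha\colon IBNC(\chi)\to NC(\chi_1)\times\cdots\times NC(\chi_{m+1})$ is a lattice isomorphism, in particular a bijection preserving $\leq$ in both directions, it identifies the sets $IBNC(\chi)^{(2)}$ and $\big(NC(\chi_1)\times\cdots\times NC(\chi_{m+1})\big)^{(2)}$, and it intertwines the convolution products on functions over these two posets.

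First I would spell out this transport-of-structure principle precisely. Given an order isomorphism $\alpha\colon L\to M$ of finite lattices, the map $\alpha^{(2)}\colon L^{(2)}\to M^{(2)}$, $(a,b)\mapsto(\alpha(a),\alpha(b))$, is a bijection, and for functions $f,g$ on $M^{(2)}$ one has $(f*g)\circ\alpha^{(2)} = (f\circ\alpha^{(2)})*(g\circ\alpha^{(2)})$, because $\alpha$ restricts to a bijection of the interval $[a,b]_L$ onto $[\alpha(a),\alpha(b)]_M$ (here I am writing $f\circ\alpha$ for $f\circ\alpha^{(2)}$ as in the statement). Consequently, pulling back the delta, zeta, and Möbius functions of $M$ along $\alpha$ yields functions on $L^{(2)}$ satisfying the defining relations: $\bar\delta\circ\alpha$ is clearly the delta function of $L$ (it is $1$ exactly on the diagonal, since $\alpha$ is injective); $\bar\zeta\circ\alpha$ is the zeta function of $L$ (since $a\leq b\iff\alpha(a)\leq\alpha(b)$); and from $\bar\mu*\bar\zeta = \bar\zeta*\bar\mu = \bar\delta$ on $M^{(2)}$ we get $(\bar\mu\circ\alpha)*(\bar\zeta\circ\alpha) = (\bar\zeta\circ\alpha)*(\bar\mu\circ\alpha) = \bar\delta\circ\alpha$ on $L^{(2)}$, so by uniqueness of the inverse of $\zeta_{IBNC(\chi)}$ under $*$ (Rota's Proposition~1, cited above) we conclude $\mu_{IBNC(\chi)} = \bar\mu\circ\alpha$.

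Then I would simply invoke Lemma~\ref{product of Mobius function} to identify $\bar\delta$, $\bar\zeta$, $\bar\mu$ on $NC(\chi_1)\times\cdots\times NC(\chi_{m+1})$ as the products of the corresponding functions on the factors $NC(\chi_i)^{(2)}$, which is exactly how $\bar\delta$, $\bar\zeta$, $\bar\mu$ were introduced in the statement, closing the argument. I do not anticipate a serious obstacle here; the only point requiring a line of care is the multiplicativity of $*$ under pullback along $\alpha^{(2)}$, i.e.\ checking that summation over $c$ with $a\leq c\leq b$ corresponds under $\alpha$ to summation over $c'$ with $\alpha(a)\leq c'\leq\alpha(b)$ — and this is immediate from $\alpha$ being an order isomorphism. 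Alternatively, if one prefers to avoid even this remark, one can note that $\delta_{IBNC(\chi)}=\bar\delta\circ\alpha$ and $\zeta_{IBNC(\chi)}=\bar\zeta\circ\alpha$ hold by inspection, and then derive $\mu_{IBNC(\chi)}=\bar\mu\circ\alpha$ by the recursive formula for the Möbius function via a straightforward induction on the length of the interval $[a,b]$, transported through $\alpha$.
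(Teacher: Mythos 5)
Your argument is correct and is exactly the (implicit) reasoning of the paper, which states this proposition without proof as an immediate consequence of the lattice isomorphism in Proposition~\ref{lattice isomorphism} and Lemma~\ref{product of Mobius function}; you have merely spelled out the standard transport-of-structure step for incidence algebras that the paper leaves to the reader.
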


  Let $(\sigma,\pi)\in IBNC(\chi)^{(2)}$ such that $\alpha(\sigma)=(\sigma_1,\cdots,\sigma_{m+1}),\alpha(\pi)=(\pi_1,\cdots,\pi_{m+1})\in NC(\chi_1)\times NC(\chi_2)\times\cdots \times NC(\chi_{m+1})$.  Then, we have 
$$\mu_{IBNC(\chi)}(\sigma,\pi)=\prod\limits_{i=1}^{m+1} \mu_{NC(\chi_i)}(\sigma_i,\pi_i).$$
For convenience, we let $\mu({\emptyset},1_{\emptyset})=1$. Given a partition $\pi\in IBNC(\chi)$ and a blcok $V\in \pi$,  we set  $\tilde{\alpha}_i(V)=V\cap [l_{i-1},l_i]$, $i=1,\cdots, m+1$.  

\begin{lemma}\label{Mobius transform1}\normalfont  Let $\pi=\{V_1,\cdots,V_t\}\in IBNC(\chi)$ and $\sigma\in IBNC(\chi)$ such that $\sigma\leq \pi$. Then,
$$\mu_{IBNC(\chi)}(\sigma|_{V_s},1_{V_s})=\prod\limits_{i=1}^{m+1} \mu_{IBNC(\chi_i|_{V_s})}(\sigma_i|\talpha_i(V_s),1_{\talpha_i(V_s)}),$$
where $1_{\talpha_i(V_s)}$ is the partition of $\talpha_i(V_s)$ into one block.
\end{lemma}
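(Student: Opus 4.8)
The plan is to deduce Lemma \ref{Mobius transform1} from the decomposition isomorphism of Proposition \ref{canonical isomorphism} together with the product formula for the M\"obius function (Lemma \ref{product of Mobius function}) and the factorization $\mu_{IBNC(\chi)}(\sigma,\pi)=\prod_i \mu_{NC(\chi_i)}(\sigma_i,\pi_i)$ recorded just before the statement. The point is that all the relations in the claimed identity live inside a single block $V_s$, so one should first replace the ambient $\chi$ by its restriction $\chi|_{V_s}$ and then apply the structural results to that restriction.

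First I would apply Proposition \ref{canonical isomorphism}: the interval $[0_n,\pi]$ in $IBNC(\chi)$ is canonically isomorphic to $\prod_{s=1}^t IBNC(\chi|_{V_s})$, and under this isomorphism $\sigma \mapsto (\sigma|_{V_1},\dots,\sigma|_{V_t})$. Consequently the interval $[\sigma,\pi]$ factors as $\prod_s [\sigma|_{V_s},1_{V_s}]$, and by Lemma \ref{product of Mobius function} the M\"obius function of $[0_n,\pi]$ is the product of the M\"obius functions of the $IBNC(\chi|_{V_s})$'s. In particular $\mu_{IBNC(\chi)}(\sigma,\pi)=\prod_{s=1}^t \mu_{IBNC(\chi|_{V_s})}(\sigma|_{V_s},1_{V_s})$, which isolates a single factor $\mu_{IBNC(\chi|_{V_s})}(\sigma|_{V_s},1_{V_s})$ as the quantity we must analyze. (Strictly, since the M\"obius function only depends on the interval, one may also argue directly that $\mu_{IBNC(\chi)}(\sigma|_{V_s},1_{V_s})$ as written in the statement means the M\"obius function computed in $IBNC(\chi|_{V_s})$; the two agree because restriction is an isomorphism of the relevant intervals.)

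Next I would apply Proposition \ref{lattice isomorphism} to $\chi|_{V_s}$ in place of $\chi$. The distinguished points $l_1<\dots<l_m$ of $\chi^{-1}\{c\}\cap[2,n-1]$ restrict to the (possibly empty) set of those $l_i$ lying in $V_s$, and since $\pi$ is $\chi$-interval every $l_i$ that has elements of $V_s$ on both sides already belongs to $V_s$; thus the "split points" of $\chi|_{V_s}$ are exactly the $l_i\in V_s$. The lattice isomorphism $\alpha$ for $\chi|_{V_s}$ then gives $IBNC(\chi|_{V_s})\cong \prod_{i=1}^{m+1} NC(\chi_i|_{\talpha_i(V_s)})$ — where factors indexed by $i$ with $\talpha_i(V_s)=\emptyset$ contribute the trivial one-element lattice and the convention $\mu(\emptyset,1_\emptyset)=1$ handles them — and $\sigma|_{V_s}$ maps to $(\sigma_i|_{\talpha_i(V_s)})_i$, $1_{V_s}$ maps to $(1_{\talpha_i(V_s)})_i$. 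Applying Lemma \ref{product of Mobius function} once more yields $\mu_{IBNC(\chi|_{V_s})}(\sigma|_{V_s},1_{V_s})=\prod_{i=1}^{m+1}\mu_{NC(\chi_i|_{\talpha_i(V_s)})}(\sigma_i|_{\talpha_i(V_s)},1_{\talpha_i(V_s)})$, and finally I would identify $NC(\chi_i|_{\talpha_i(V_s)})$ with $IBNC(\chi_i|_{V_s})$: on the interval $[l_{i-1},l_i]$ the map $\chi_i$ takes the value $c$ only possibly at the two endpoints, so the earlier Lemma gives $IBNC(\chi_i|_{V_s}) = NC(\chi_i|_{V_s})$, matching the notation in the statement. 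Combining the two displays gives the asserted formula.

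The main obstacle I anticipate is bookkeeping around the split points: one must check carefully that the restriction of the "$c$-at-an-interior-point" structure of $\chi$ to $V_s$ is genuinely governed by $\{l_i \in V_s\}$ and that, for such $l_i$, the point $l_i$ actually sits inside $V_s$ (so that $\talpha_i(V_s)$ and $\talpha_{i+1}(V_s)$ share the endpoint $l_i$ just as $[l_{i-1},l_i]$ and $[l_i,l_{i+1}]$ do in the global picture). This is exactly where the hypothesis $\pi\in IBNC(\chi)$ — in particular that $\pi$ is $\chi$-interval — is used, and it is the step one should state explicitly rather than wave through. Once that compatibility is in place, everything else is a formal application of the product rule for M\"obius functions over a lattice direct product.
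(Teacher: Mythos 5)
Your proof is correct and takes essentially the same route as the paper's: the paper's own (very terse) proof just decomposes $IBNC(\chi|_{V_s})$ as the product $\prod_{i=1}^{m+1} IBNC(\chi|_{\tilde{\alpha}_i(V_s)})$ and invokes the product rule for M\"obius functions (Lemma \ref{product of Mobius function}). Your write-up is more careful than the paper in two harmless ways --- you justify reading the left-hand side as a M\"obius function computed in $IBNC(\chi|_{V_s})$, and you check that the split points of $\chi|_{V_s}$ are exactly the $l_i\in V_s$ (using that $\pi$ is $\chi$-interval) --- but the underlying argument is the same.
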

\begin{proof}
By Proposition \ref{canonical isomorphism}, we have  $$IBNC(\chi|_{V_s})=IBNC(\chi|_{\talpha_1(V_s)})\times IBNC(\chi|_{\talpha_2(V_s)})\times\cdots \times IBNC(\chi|_{\talpha_{m+1}(V_s)}).$$ The statement follows from Lemma \ref{product of Mobius function}.
 \end{proof}
Further more, we have the following result.
\begin{lemma}\label{Mobius transform}\normalfont  Let $\pi=\{V_1,\cdots,V_t\}\in IBNC(\chi)$ and $\sigma\in IBNC(\chi)$ such that $\sigma\leq \pi$. Then,
$$\mu_{IBNC(\chi)}(\sigma,\pi)=\prod\limits_{s=1}^t\prod\limits_{i=1}^{m+1} \mu_{IBNC{\chi_i|_{V_s}}}(\sigma_i|\talpha_i(V_s),1_{\talpha_i(V_s)}).$$
\end{lemma}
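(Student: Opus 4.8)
The plan is to reduce the statement to Lemma \ref{Mobius transform1} together with the standard multiplicativity of M\"obius functions over the interval $[\sigma,\pi]$ in a lattice. First I would recall that in any finite lattice $L$, if $\pi\in L$ has the property that the interval $[0,\pi]$ factors as a direct product $\prod_s [0,\pi_s']$, then for $\sigma\le\pi$ the M\"obius value $\mu(\sigma,\pi)$ factors accordingly; applied here via Proposition \ref{canonical isomorphism}, the interval $[0_n,\pi]\cong IBNC(\chi|_{V_1})\times\cdots\times IBNC(\chi|_{V_t})$, and under this isomorphism $\pi$ corresponds to $(1_{V_1},\dots,1_{V_t})$ while $\sigma$ corresponds to $(\sigma|_{V_1},\dots,\sigma|_{V_t})$. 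Hence by Lemma \ref{product of Mobius function},
$$\mu_{IBNC(\chi)}(\sigma,\pi)=\prod_{s=1}^t \mu_{IBNC(\chi|_{V_s})}(\sigma|_{V_s},1_{V_s}).$$

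Second, I would apply Lemma \ref{Mobius transform1} to each factor on the right-hand side: it rewrites each $\mu_{IBNC(\chi|_{V_s})}(\sigma|_{V_s},1_{V_s})$ as $\prod_{i=1}^{m+1}\mu_{IBNC(\chi_i|_{V_s})}(\sigma_i|\talpha_i(V_s),1_{\talpha_i(V_s)})$. Substituting this into the product over $s$ gives exactly the asserted double-product formula, so the proof is essentially a two-line composition of already-established facts. I would be careful about the convention $\mu(\emptyset,1_\emptyset)=1$ adopted just above the lemma, since many of the blocks $\talpha_i(V_s)$ may be empty (a block $V_s$ need not meet every subinterval $[l_{i-1},l_i]$); this convention ensures those empty factors contribute trivially and the formula is unambiguous.

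The only genuinely substantive point — and the one I expect to be the main obstacle to state cleanly rather than to prove — is the first step: justifying that $\mu_{IBNC(\chi)}(\sigma,\pi)$ factors over the blocks $V_s$ of $\pi$. This is the assertion that the M\"obius function is multiplicative with respect to the product decomposition of the interval $[0_n,\pi]$ from Proposition \ref{canonical isomorphism}. Since Proposition \ref{canonical isomorphism} identifies $[0_n,\pi]$ with a direct product of lattices, and Lemma \ref{product of Mobius function} computes the M\"obius function of a direct product as the product of the M\"obius functions, the value $\mu_{[0_n,\pi]}(\sigma,\pi)$ — which coincides with $\mu_{IBNC(\chi)}(\sigma,\pi)$ because the M\"obius function of a poset depends only on the interval — factors as claimed. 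I would phrase the proof so that this reduction is made explicit before invoking Lemma \ref{Mobius transform1}, to avoid any appearance of circularity with Lemma \ref{Mobius transform1} itself (which already uses Proposition \ref{canonical isomorphism} at the level of a single block).

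\begin{proof}
By Proposition \ref{canonical isomorphism}, the interval $[0_n,\pi]$ in $IBNC(\chi)$ is isomorphic, as a lattice, to $IBNC(\chi|_{V_1})\times\cdots\times IBNC(\chi|_{V_t})$, and under this isomorphism $\pi\mapsto(1_{V_1},\dots,1_{V_t})$ while $\sigma\mapsto(\sigma|_{V_1},\dots,\sigma|_{V_t})$. Since the M\"obius function of a poset depends only on the interval in question, and since by Lemma \ref{product of Mobius function} the M\"obius function of a direct product of lattices is the product of the corresponding M\"obius functions, we obtain
$$\mu_{IBNC(\chi)}(\sigma,\pi)=\prod_{s=1}^t \mu_{IBNC(\chi|_{V_s})}(\sigma|_{V_s},1_{V_s}).$$
Applying Lemma \ref{Mobius transform1} to each factor gives
$$\mu_{IBNC(\chi|_{V_s})}(\sigma|_{V_s},1_{V_s})=\prod_{i=1}^{m+1}\mu_{IBNC(\chi_i|_{V_s})}(\sigma_i|\talpha_i(V_s),1_{\talpha_i(V_s)}),$$
with the convention $\mu(\emptyset,1_\emptyset)=1$ handling the blocks $\talpha_i(V_s)$ that are empty. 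Substituting this into the previous display yields
$$\mu_{IBNC(\chi)}(\sigma,\pi)=\prod_{s=1}^t\prod_{i=1}^{m+1}\mu_{IBNC(\chi_i|_{V_s})}(\sigma_i|\talpha_i(V_s),1_{\talpha_i(V_s)}),$$
which is the asserted identity.
\end{proof}
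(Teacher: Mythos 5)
Your proof is correct, and it is essentially the argument the paper intends: the paper states this lemma without proof, and your two-step composition (factor $\mu_{IBNC(\chi)}(\sigma,\pi)$ over the blocks of $\pi$ via Proposition \ref{canonical isomorphism} and Lemma \ref{product of Mobius function}, then apply Lemma \ref{Mobius transform1} to each factor) is exactly the derivation the surrounding results set up. The only notable difference is one of ordering --- your first display is precisely the paper's Proposition \ref{Mobius transform2}, which the paper instead deduces \emph{from} this lemma --- but since you establish that factorization independently of the lemma being proved, there is no circularity, and your attention to the $\mu(\emptyset,1_{\emptyset})=1$ convention for empty $\talpha_i(V_s)$ is a correct and necessary detail.
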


By  Lemma \ref{Mobius transform1} and \ref{Mobius transform}, we have the follow proposition.
\begin{proposition}\label{Mobius transform2} \normalfont
Let $\pi=\{V_1,\cdots,V_t\}\in IBNC(\chi)$ and $\sigma\in IBNC(\chi)$ such that $\sigma\leq \pi$. Then,
$$\mu_{IBNC}(\sigma,\pi)=\prod\limits_{s=1}^t\mu_{IBNC(\chi|_{V_s})}(\sigma|_{V_s},1_{V_s}).$$
\end{proposition}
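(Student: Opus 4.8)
The plan is to derive Proposition \ref{Mobius transform2} by collapsing the double product in Lemma \ref{Mobius transform} back into a single product over the blocks of $\pi$, using the multiplicativity of the Möbius function on direct products. First I would fix $s$ and look at the inner product $\prod_{i=1}^{m+1}\mu_{IBNC(\chi_i|_{V_s})}(\sigma_i|\talpha_i(V_s),1_{\talpha_i(V_s)})$. By Proposition \ref{canonical isomorphism} applied to the restriction $\chi|_{V_s}$, the lattice $IBNC(\chi|_{V_s})$ decomposes as the direct product $IBNC(\chi|_{\talpha_1(V_s)})\times\cdots\times IBNC(\chi|_{\talpha_{m+1}(V_s)})$, and under this isomorphism the pair $(\sigma|_{V_s},1_{V_s})$ corresponds precisely to $\big((\sigma_1|\talpha_1(V_s),1_{\talpha_1(V_s)}),\ldots,(\sigma_{m+1}|\talpha_{m+1}(V_s),1_{\talpha_{m+1}(V_s)})\big)$, since restricting $\sigma|_{V_s}$ to $\talpha_i(V_s)$ gives $\sigma_i|\talpha_i(V_s)$ and the one-block partition of $V_s$ restricts to the one-block partition of each piece. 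Hence by Lemma \ref{product of Mobius function} the inner product equals $\mu_{IBNC(\chi|_{V_s})}(\sigma|_{V_s},1_{V_s})$; this is exactly the content of Lemma \ref{Mobius transform1}, so I would simply invoke it.

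Substituting this identification into the formula of Lemma \ref{Mobius transform} gives
$$\mu_{IBNC(\chi)}(\sigma,\pi)=\prod_{s=1}^t\mu_{IBNC(\chi|_{V_s})}(\sigma|_{V_s},1_{V_s}),$$
which is the desired statement. So at the level of bookkeeping the proposition is almost immediate once Lemmas \ref{Mobius transform1} and \ref{Mobius transform} are in hand — the former handles one block at a time, the latter handles the product over all blocks, and the proposition is the composition.

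The only genuine content to verify carefully — and the step I expect to be the main obstacle — is Lemma \ref{Mobius transform} itself, i.e. the claim that $\mu_{IBNC(\chi)}(\sigma,\pi)=\prod_{s=1}^t\prod_{i=1}^{m+1}\mu_{IBNC(\chi_i|_{V_s})}(\sigma_i|\talpha_i(V_s),1_{\talpha_i(V_s)})$. To prove this I would use the full isomorphism $\alpha:IBNC(\chi)\to NC(\chi_1)\times\cdots\times NC(\chi_{m+1})$ from Proposition \ref{lattice isomorphism}, which gives $\mu_{IBNC(\chi)}(\sigma,\pi)=\prod_{i=1}^{m+1}\mu_{NC(\chi_i)}(\sigma_i,\pi_i)$, and then apply the standard multiplicativity of the Möbius function on $NC$ over the blocks of the coarser partition (the factorization $\mu_{NC(\chi_i)}(\sigma_i,\pi_i)=\prod_{W\in\pi_i}\mu_{NC(\chi_i|_W)}(\sigma_i|_W,1_W)$, which is the Möbius-function analogue of Proposition \ref{canonical isomorphism} restricted to the $i$-th coordinate). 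The subtlety is that the blocks $W$ of $\pi_i$ are exactly the sets $\talpha_i(V_s)$ as $V_s$ ranges over blocks of $\pi$ — with the convention $\mu(\emptyset,1_\emptyset)=1$ absorbing those $s$ for which $\talpha_i(V_s)=\emptyset$, since a block $V_s$ of $\pi$ need not meet every interval $[l_{i-1},l_i]$. Once one checks that $\{W:W\in\pi_i\}=\{\talpha_i(V_s):s=1,\ldots,t,\ \talpha_i(V_s)\neq\emptyset\}$ and that $\chi_i|_W=\chi|_{\talpha_i(V_s)}$, reindexing the double product over $(s,i)$ as a product over $i$ and then over $W\in\pi_i$ finishes it. I would present Lemma \ref{Mobius transform} with this argument and then state Proposition \ref{Mobius transform2} as the two-line corollary above.
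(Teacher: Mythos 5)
Your proof is correct and follows the same route as the paper, which obtains Proposition \ref{Mobius transform2} by substituting the per-block identity of Lemma \ref{Mobius transform1} into the double product of Lemma \ref{Mobius transform}. Your additional sketch of how to prove Lemma \ref{Mobius transform} itself (via the lattice isomorphism of Proposition \ref{lattice isomorphism}, blockwise multiplicativity of the M\"obius function on $NC$, and the identification of the blocks of $\pi_i$ with the nonempty sets $\talpha_i(V_s)$) is a useful supplement, since the paper states that lemma without proof.
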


\section{ Vanishing cumulants condition for free-free-Boolean independence}
In this section, we introduce the notion of free-free-Boolean cumulants and  show that the  vanishing of mixed free-free-Boolean cumulants is equivalent to our free-free-Boolean independence.
\subsection{Free-Boolean cumulants}
Let $(\A,\phi)$ be a noncommutative probability space. For $n\in \mathbb{N}$, let $\phi^{(n)}$ be the \emph{$n$-linear} map from $\underbrace{\A\otimes\cdots\otimes \A}_{\text{n times}}$ to $\C$  defined as
$$\phi^{(n)}(z_1,\cdots,z_n)=\phi(z_1\cdots z_n),$$
where $z_1,...,z_n\in \A.$\\
Let $V=\{l_1<l_2<\cdots< l_k\}$ be a subset of $ [n]$. Then we define
$$\phi_{V}(z_1,\cdots,z_n)=\phi(z_{l_1}z_{l_2}\cdots z_{l_k}). $$
Let $\pi=\{V_1,\cdots,V_s\}$ be a partition on $[n]$.  Then we  define an $n$-linear map $\phi_{\pi}: \underbrace{\A\otimes\cdots\otimes \A}_{\text{n times}}\rightarrow \C$ as follows:
$$\phi_\pi(z_1,\cdots,z_n)=\prod\limits_{t=1}^s \phi_{V_t}(z_1,\cdots,z_n).$$ 
For example,  let $n=8$ and $\pi=\{\{1,5,8\},\{2,3,4\},\{6,7\}\}$. Then,
$$\phi_{\pi}(z_1,\cdots,z_8)=\phi(z_1z_5z_8)\phi(z_6z_7)\phi(z_2z_3z_4).$$
\begin{definition}  \normalfont Given $\chi$ and $\pi\in IBNC(\chi)$, the \emph{free-free-Boolean cumulant} $\kappa_{\chi,\pi}$ is an $n$-linear map defined as follows:
$$\kappa_{\chi,\pi}(z_1,\cdots,z_n)
=\sum\limits_{\substack{\sigma\leq \pi\\ \sigma\in IBNC(\chi)}}\mu_{IBNC}(\sigma,\pi)\phi_{\sigma}(z_1,\cdots,z_n) .$$ 
\end{definition}

\begin{theorem}\label{multiplicative property} \normalfont  Let $\pi=\{V_1,\cdots,V_t\}\in IBNC(\chi)$ and $z_1,\cdots,z_n$ be  random variables in a  probability space $(\A,\phi)$. Then
$$\kappa_{\chi,\pi}(z_1,\cdots,z_n)=\prod\limits_{s=1}^t \kappa_{\chi|_{V_s},1_{V_s}}(z_1,\cdots,z_n). $$
\end{theorem}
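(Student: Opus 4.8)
The plan is to reduce the multiplicativity of the free-free-Boolean cumulant over the blocks of $\pi$ to the multiplicativity of the Möbius function established in Proposition \ref{Mobius transform2}, together with the lattice decomposition of the interval below $\pi$ given in Proposition \ref{canonical isomorphism}. First I would expand the left-hand side directly from the definition,
\[
\kappa_{\chi,\pi}(z_1,\cdots,z_n)=\sum_{\substack{\sigma\leq\pi\\ \sigma\in IBNC(\chi)}}\mu_{IBNC}(\sigma,\pi)\,\phi_\sigma(z_1,\cdots,z_n).
\]
The key structural input is that, by Proposition \ref{canonical isomorphism}, the map $\sigma\mapsto(\sigma|_{V_1},\cdots,\sigma|_{V_t})$ is a bijection from $[0_n,\pi]$ onto $IBNC(\chi|_{V_1})\times\cdots\times IBNC(\chi|_{V_t})$. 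So the single sum over $\sigma\leq\pi$ factors as an iterated sum over independent choices of $\sigma|_{V_s}\in IBNC(\chi|_{V_s})$.

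Next I would rewrite the summand as a product over the blocks. Since $\phi_\sigma$ is multiplicative over the blocks of $\sigma$, and every block of $\sigma$ is contained in exactly one $V_s$ (because $\sigma\leq\pi$), we get $\phi_\sigma(z_1,\cdots,z_n)=\prod_{s=1}^t\phi_{\sigma|_{V_s}}(z_1,\cdots,z_n)$, where $\phi_{\sigma|_{V_s}}$ means the product of $\phi_V$ over blocks $V$ of $\sigma$ lying in $V_s$. For the Möbius factor, Proposition \ref{Mobius transform2} gives precisely $\mu_{IBNC}(\sigma,\pi)=\prod_{s=1}^t\mu_{IBNC(\chi|_{V_s})}(\sigma|_{V_s},1_{V_s})$. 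Substituting both factorizations and using the product-of-sums identity afforded by the bijection above, the double product distributes over the sum:
\[
\kappa_{\chi,\pi}(z_1,\cdots,z_n)=\prod_{s=1}^t\left(\sum_{\substack{\tau\in IBNC(\chi|_{V_s})}}\mu_{IBNC(\chi|_{V_s})}(\tau,1_{V_s})\,\phi_\tau(z_1,\cdots,z_n)\right).
\]
Each factor is by definition $\kappa_{\chi|_{V_s},1_{V_s}}(z_1,\cdots,z_n)$, which completes the argument.

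The main obstacle, such as it is, is bookkeeping: one must be careful that the index sets match up — that $\sigma$ ranging over $[0_n,\pi]$ in $IBNC(\chi)$ corresponds exactly to independent choices of $\tau_s:=\sigma|_{V_s}$ ranging over all of $IBNC(\chi|_{V_s})$ (i.e.\ that every tuple arises and arises once), which is exactly the content of Proposition \ref{canonical isomorphism}, and that the notation $\phi_{\sigma|_{V_s}}$ and $\kappa_{\chi|_{V_s},1_{V_s}}$ is consistent with restricting $\chi$ to $V_s$ and reading $\phi_V$ only for blocks $V\subseteq V_s$. A secondary point worth spelling out is that when $V_s$ straddles several of the intervals $[l_{i-1},l_i]$, the lattice $IBNC(\chi|_{V_s})$ itself further decomposes as $\prod_i IBNC(\chi|_{\talpha_i(V_s)})$ (Lemma \ref{Mobius transform1}), but this refinement is not needed for the present statement — Proposition \ref{Mobius transform2} already packages it. No genuine combinatorial difficulty remains once those identifications are made; the proof is essentially a reorganization of sums and products.
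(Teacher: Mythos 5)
Your proposal is correct and follows essentially the same route as the paper: expand $\kappa_{\chi,\pi}$ from the definition, factor the M\"obius function over the blocks $V_s$ via Proposition \ref{Mobius transform2} (the paper first passes through the finer double product of Lemma \ref{Mobius transform} before collapsing to the same expression), factor $\phi_\sigma$ over blocks, and use the isomorphism $[0_n,\pi]\cong\prod_s IBNC(\chi|_{V_s})$ from Proposition \ref{canonical isomorphism} to turn the sum of products into a product of sums. Nothing is missing.
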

\begin{proof} By Lemma \ref{Mobius transform}, we have 
$$
\begin{array}{rcl}
&&\kappa_{\chi,\pi}(z_1,\cdots,z_n)\\

&=&\sum\limits_{\substack{\sigma\leq \pi\\ \sigma\in IBNC(\chi)}}\mu_{IBNC}(\sigma,\pi)\phi_{\sigma}(z_1,\cdots,z_n)\\

&=&\sum\limits_{\substack{\sigma\leq \pi\\ \sigma\in IBNC(\chi)}}\left[\prod\limits_{i=1}^m\prod\limits_{s=1}^{t} \mu_{IBNC(\chi_i|_{V_s})}(\sigma_i|\talpha_i({V_s}),1_{\talpha_i(V_s)})\right]\left[\prod\limits_{s=1}^{t}\phi_{\sigma|_{V_s}}(z_1,\cdots,z_n)\right]\\

&=&\sum\limits_{\substack{\sigma\leq \pi\\ \sigma\in IBNC(\chi)}}\prod\limits_{s=1}^t\left[\prod\limits_{i=1}^{m} \mu_{IBNC(\chi_i|_{V_s})}(\sigma_i|\talpha_i(V_s),1_{\talpha_i(V_s)})\right]\left[\prod\limits_{s=1}^{t}\phi_{\sigma|_{V_s}}(z_1,\cdots,z_n)\right]\\

&=&\sum\limits_{\substack{\sigma\leq \pi\\ \sigma\in IBNC(\chi)}}\prod\limits_{s=1}^t\left[\prod\limits_{i=1}^{m} \mu_{IBNC(\chi_i|_{V_s})}(\sigma_i|\talpha_i(V_s),1_{\talpha_i(V_s)})\phi_{\sigma|_{V_s}}(z_1,\cdots,z_n)\right]\\

&=&\sum\limits_{\substack{\sigma\leq \pi\\ \sigma\in IBNC(\chi)}}\prod\limits_{s=1}^t\left[\mu_{IBNC(\chi|_{V_s})}(\sigma|_{V_s},1_{V_s})\phi_{\sigma|_{V_s}}(z_1,\cdots,z_n)\right],\\

\end{array}
$$
where the last equality follows Lemma \ref{Mobius transform}. By corollary \ref{canonical isomorphism}, we have that 
$$
\begin{array}{rcl}
&&\sum\limits_{\substack{\sigma\leq \pi\\ \sigma\in IBNC(\chi)}}\prod\limits_{s=1}^t\left[\mu_{IBNC}(\sigma|_{V_s},1_{V_s})\phi_{\sigma|_{V_s}}(z_1,\cdots,z_n)\right]\\
&=&\prod\limits_{s=1}^t\left[\sum\limits_{\substack{ \sigma|_{V_s}\in IBNC(\chi|_{V_s})}}\mu_{IBNC}(\sigma|_{V_s},1_{V_s})\phi_{\sigma|_{V_s}}(z_1,\cdots,z_n)\right]\\
&=&\prod\limits_{s=1}^t \kappa_{\chi|_{V_s},1_{V_s}}(z_1,\cdots,z_n),
\end{array}
$$
thus the proof is complete.
\end{proof}

\begin{definition} \normalfont 
Let $\{(\mathcal{A}_{i,\ell}, \mathcal{A}_{i,r},\mathcal{A}_{i,c})\}_{i \in I}$ be a family of triples of faces in a  probability space $(\mathcal{A}, \varphi)$. We say that the family $\{(\mathcal{A}_{i,\ell}, \mathcal{A}_{i,r},\mathcal{A}_{i,c})\}_{i \in I}$ is  \emph{combinatorially free-free-Boolean independent} if 
$$\kappa_{\chi,1_n}(z_1,\cdots, z_n)=0 $$
whenever  $\chi: [n]\to \{\ell, r,c\}$, $\omega : [n]\to I$,  $z_k\in\A_{\omega(k),\chi(k)}$ and $\omega$ is not a constant.
\end{definition}

\begin{proposition}\normalfont 
Let $\{(\mathcal{A}_{i,\ell}, \mathcal{A}_{i,r},\mathcal{A}_{i,c})\}_{i \in I}$ be a family of triples of faces in a  probability space $(\mathcal{A}, \varphi)$. 
Then $\kappa_{\chi,1_n}$ has the following  cumulant property:
$$ \kappa_{\chi,1_n}(z_{1,1}+z_{2,1},\cdots, z_{1,n}+z_{2,n})=\kappa_{\chi,1_n}(z_{1,1},\cdots, z_{1,n})+\kappa_{\chi,1_n}(z_{2,1},\cdots, z_{2,n})$$
whenever $\omega_1,\omega_2:[n]\rightarrow I$, $\chi:[n]\rightarrow \{\ell, r,c\} $, $z_{1,k}\in\A_{\omega_1(k),\chi(k)}$, $z_{2,k}\in\A_{\omega_2(k),\chi(k)}$  and $\omega_1([n])\cap\omega_2([n])=\emptyset$.
\end{proposition}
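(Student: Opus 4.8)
The plan is to expand the cumulant $\kappa_{\chi,1_n}$ on the left-hand side using its definition as a sum over $\sigma \in IBNC(\chi)$ with $\sigma \leq 1_n$ of $\mu_{IBNC}(\sigma, 1_n)\,\phi_\sigma$, then multilinearly distribute $\phi_\sigma$ over the sums $z_{1,k} + z_{2,k}$ in each slot. For a fixed $\sigma$, $\phi_\sigma$ is a product of factors $\phi_V$ over blocks $V$ of $\sigma$, and each $\phi_V(w_1,\dots,w_n) = \phi(w_{l_1}\cdots w_{l_k})$ is itself multilinear. Expanding, $\phi_\sigma(z_{1,1}+z_{2,1},\dots,z_{1,n}+z_{2,n})$ becomes a sum over functions $\epsilon : [n] \to \{1,2\}$ of $\phi_\sigma(z_{\epsilon(1),1},\dots,z_{\epsilon(n),n})$. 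The key observation is that for a given block $V$ of $\sigma$, the factor $\phi_V(z_{\epsilon(1),1},\dots,z_{\epsilon(n),n})$ only depends on the values $\epsilon|_V$; since the $z_{1,\cdot}$ come from algebras indexed by $\omega_1$ and the $z_{2,\cdot}$ from $\omega_2$ with $\omega_1([n]) \cap \omega_2([n]) = \emptyset$, the composite index map $\omega_\epsilon : [n] \to I$ defined by $\omega_\epsilon(k) = \omega_1(k)$ if $\epsilon(k) = 1$ and $\omega_\epsilon(k) = \omega_2(k)$ if $\epsilon(k) = 2$ is a legitimate index function for which these are random variables in $\A_{\omega_\epsilon(k),\chi(k)}$.

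Next I would split the sum over $\epsilon$ according to the partition $\ker \epsilon$, which has at most two blocks $\epsilon^{-1}(1)$ and $\epsilon^{-1}(2)$. The claim reduces to showing that all the "genuinely mixed" terms cancel among themselves — but in fact I expect something stronger and cleaner: the only $\epsilon$ contributing nonzero terms after the full cumulant sum over $\sigma$ are the two constant ones $\epsilon \equiv 1$ and $\epsilon \equiv 2$, which give exactly $\kappa_{\chi,1_n}(z_{1,1},\dots,z_{1,n})$ and $\kappa_{\chi,1_n}(z_{2,1},\dots,z_{2,n})$. So for each non-constant $\epsilon$, I would fix it, collect all terms $\sum_{\sigma \in IBNC(\chi), \sigma \leq 1_n} \mu_{IBNC}(\sigma,1_n)\,\phi_\sigma(z_{\epsilon(1),1},\dots,z_{\epsilon(n),n})$ — which is precisely $\kappa_{\chi,1_n}(z_{\epsilon(1),1},\dots,z_{\epsilon(n),n})$ with the mixed index function $\omega_\epsilon$ — and argue it vanishes. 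But wait: that last step would require knowing that mixed cumulants with respect to $\omega_\epsilon$ vanish, i.e. combinatorial free-free-Boolean independence, which is an \emph{extra hypothesis} not assumed in the proposition. Re-reading the statement, the proposition is asserted for an arbitrary family of triples of faces, so the correct route must instead use only multilinearity and not vanishing of anything.

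So I would instead argue purely formally: the identity $\kappa_{\chi,1_n}(z_{1,1}+z_{2,1},\dots) = \kappa_{\chi,1_n}(z_{1,1},\dots) + \kappa_{\chi,1_n}(z_{2,1},\dots)$ cannot hold term-by-term for arbitrary faces, so the hypothesis $\omega_1([n]) \cap \omega_2([n]) = \emptyset$ with $z_{i,k} \in \A_{\omega_i(k),\chi(k)}$ must be doing real work, and the cleanest path is to reduce to the multiplicative/reconstruction framework: by Theorem \ref{multiplicative property} applied after passing to the (cumulant-generated) picture, $\kappa_{\chi,1_n}$ is itself recovered by Möbius inversion from the $\phi_\sigma$, and the restriction that the two index families are disjoint means that any $\sigma \in IBNC(\chi)$ refining $\ker \omega_\epsilon$ must actually refine $\ker\epsilon = \{\epsilon^{-1}(1), \epsilon^{-1}(2)\}$ unless $\phi_\sigma$ factors through a block straddling both, in which case the corresponding contributions telescope under the Möbius sum. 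Concretely, I would group the sum $\sum_\epsilon \kappa_{\chi,1_n}(z_{\epsilon(1),1},\dots)$ by whether $\ker\epsilon \vee \sigma$ equals $1_n$ or not, and use that $\sum_{\sigma : \sigma \vee \tau = 1_n} \mu(\sigma,1_n) = 0$ for $\tau \neq 1_n$ (a standard lattice-theoretic fact, applicable here since $IBNC(\chi)$ is a lattice by Proposition \ref{lattice isomorphism}) together with the fact that $\phi_\sigma$ only depends on $\epsilon$ through $\epsilon|_{\text{blocks}}$. The main obstacle is getting the bookkeeping right: one must check that for every non-constant $\epsilon$, the partition $\ker\epsilon$ is \emph{not} below any single-block refinement compatible with $\phi_\sigma$ factoring nicely, which is exactly where the disjointness $\omega_1([n]) \cap \omega_2([n]) = \emptyset$ enters — it forces $\ker\epsilon \leq \ker\omega_\epsilon$, hence the relevant Möbius sum is over $\{\sigma : \sigma \vee \ker\epsilon = 1_n\}$ and vanishes unless $\ker\epsilon = 1_n$. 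I expect this lattice-complement-summation step to be the crux, and everything else to be routine multilinear expansion.
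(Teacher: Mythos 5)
Your first paragraph is essentially the paper's entire proof: expand $\kappa_{\chi,1_n}$ multilinearly over the choices $\epsilon:[n]\to\{1,2\}$, observe that each mixed term is itself a cumulant $\kappa_{\chi,1_n}(z_{\epsilon(1),1},\dots,z_{\epsilon(n),n})$ with respect to the non-constant index function $\omega_\epsilon$, and conclude that all non-constant $\epsilon$ contribute zero. You were also right to notice that this last step needs the family to be \emph{combinatorially free-free-Boolean independent}; the paper's statement omits this hypothesis, but its proof invokes it explicitly (``Since $\{(\A_{i,\ell},\A_{i,r})\}_{i\in I}$ are combinatorially free-Boolean independent, by the preceding definition\dots''), and the proposition sits immediately after that definition. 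The hypothesis is an omission in the statement, not something to be proved around: the claim is simply false for an arbitrary family of faces. For $n=2$ one has $\kappa_{\chi,1_2}(z_1,z_2)=\phi(z_1z_2)-\phi(z_1)\phi(z_2)$, so taking $\A_{1,\ell}=\A_{2,\ell}=\A$, $z_{1,1}=z_{2,2}=x$ and $z_{1,2}=z_{2,1}=0$ gives left side $\phi(x^2)-\phi(x)^2$ and right side $0$.

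Consequently the second half of your proposal, which tries to make the mixed terms cancel by pure lattice combinatorics, cannot succeed and as written does not. The identity $\sum_{\sigma:\,\sigma\vee\tau=1_n}\mu(\sigma,1_n)=0$ for $\tau<1_n$ is a sum of M\"obius values alone; in the quantity you need to kill, namely $\sum_{\sigma}\mu_{IBNC}(\sigma,1_n)\,\phi_\sigma(z_{\epsilon(1),1},\dots,z_{\epsilon(n),n})$, each term carries the weight $\phi_\sigma$, which genuinely depends on $\sigma$ and does not factor through $\sigma\vee\ker\epsilon$. There is no way to extract the unweighted M\"obius sum without an independence assumption that lets you control the $\phi_\sigma$ (this is precisely what the moment--cumulant formula under combinatorial independence provides). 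The fix is simply to restore the hypothesis and keep your first argument: multilinearity gives $\kappa_{\chi,1_n}(z_{1,1}+z_{2,1},\dots)=\sum_{\epsilon}\kappa_{\chi,1_n}(z_{\epsilon(1),1},\dots,z_{\epsilon(n),n})$, and since $\omega_1([n])\cap\omega_2([n])=\emptyset$ forces $\omega_\epsilon$ to be non-constant for every non-constant $\epsilon$, all such terms vanish by the definition of combinatorial free-free-Boolean independence, leaving exactly the two constant terms.
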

\begin{proof}
By direct calculations, we have
$$\kappa_{\chi,1_n}(z_{1,1}+z_{2,1},\cdots, z_{1,n}+z_{2,n})=\sum\limits_{i_1,...i_n\in\{1,2\}}\kappa_{\chi,1_n}(z_{i_1,1},\cdots, z_{i_n,n}).$$
Since $\{(\mathcal{A}_{i,\ell}, \mathcal{A}_{i,r})\}_{i \in I}$ are combinatorially free-Boolean independent, by the preceding definition, we have 
$$\kappa_{\chi,1_n}(z_{i_1,1},\cdots, z_{i_n,n})=0$$
if $i_k\neq i_j$ for some $j,k\in[n]$. The result follows.
\end{proof}

\subsection{Free-Boolean is equivalent to combinatorially free-Boolean }In this subsection,  we will prove the following main theorem:

\begin{theorem}\label{Main} \normalfont 
Let $\{\A_{i,\ell}, \A_{i,r},\A_{i,c}\}_{i \in I}$ be a family of triples of faces in a probability space $(\mathcal{A}, \phi)$. $\{\A_{i,\ell}, \A_{i,r},\A_{i,c}\}_{i \in I}$ are free-Boolean independent if and only if they are combinatorially free-free-Boolean.
\end{theorem}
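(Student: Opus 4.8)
The plan is to prove both directions by reducing everything to the operator model. First I would invoke Remark \ref{Largest triple} and Proposition \ref{subagbras of ffb triples}: by definition, a family of free-free-Boolean triples has the same joint distribution as the canonical model $\{(\lambda_i(\LL(\X_i)),\rho_i(\LL(\X_i)),P_i\lambda_i(\LL(\X_i))P_i)\}$ acting on the reduced free product $(\LL(\X),\phi_\xi)$, and combinatorial freeness is a property of the joint distribution only. So it suffices to (i) prove that the canonical operator model is combinatorially free-free-Boolean, and (ii) prove that any family which is combinatorially free-free-Boolean has the same mixed moments as a free-free-Boolean family, hence is free-free-Boolean. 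Direction (ii) is really the statement that the combinatorial cumulants, together with the vanishing-of-mixed-cumulants condition, determine all mixed moments; this follows from the moment-cumulant formula obtained by M\"obius inversion on the lattice $IBNC(\chi)$, exactly as in the single-faced and bi-free cases. Concretely, given $z_k\in\A_{\omega(k),\chi(k)}$, write $\phi(z_1\cdots z_n)=\sum_{\pi\in IBNC(\chi)}\kappa_{\chi,\pi}(z_1,\dots,z_n)$; by Theorem \ref{multiplicative property} each $\kappa_{\chi,\pi}$ factors over the blocks of $\pi$, and by the vanishing condition the only surviving $\pi$ are those whose blocks are monochromatic for $\omega$. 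This forces $\phi(z_1\cdots z_n)$ to be a universal polynomial in the moments of each individual triple $(\A_{i,\ell},\A_{i,r},\A_{i,c})$ — the same polynomial for every combinatorially free-free-Boolean family — and in particular the same as for the operator model, which is free-free-Boolean by (i).

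So the work concentrates on direction (i): I must show $\kappa_{\chi,1_n}(z_1,\dots,z_n)=0$ whenever $z_k\in\A_{\omega(k),\chi(k)}$ in the operator model and $\omega$ is non-constant. The strategy is the standard one: compute the mixed moments $\phi_\xi(T_1\cdots T_n)$ of products of operators $T_k$ drawn from $\lambda_i(\LL(\X_i))$, $\rho_i(\LL(\X_i))$, $P_i\lambda_i(\LL(\X_i))P_i$ directly from the action on the reduced free product space, organize the resulting sum over "which tensor legs survive" as a sum over partitions $\pi\in IBNC(\chi)$, and read off that $\phi_\xi(T_1\cdots T_n)=\sum_{\pi\in IBNC(\chi)}\kappa_{\chi,\pi}$ where $\kappa_{\chi,\pi}$ vanishes unless every block of $\pi$ lies in a single index $i$. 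Here the three ingredients are: the Voiculescu isomorphisms $V_i,W_i$ describing how $\lambda_i,\rho_i$ act by creating/annihilating/modifying the leftmost (resp. rightmost) tensor leg; Proposition \ref{simple Boolean} and its Corollary, which say the "central" operators $P_i\lambda_i(\cdot)P_i$ commute with the projection onto $\C\xi\oplus\mrx_i$ and hence only ever see the $\mrx_i$-component, acting in a Boolean fashion; and the combinatorial identification of the bookkeeping data with interval-bi-noncrossing partitions. The $\chi$-noncrossing condition records the nesting pattern of the $\ell/r$ legs in the free product (left legs built up from position $i_1$, right legs from position $i_n$ inward, crossings impossible as in bi-free probability), and the $\chi$-interval condition records the Boolean collapse caused by the central $c$-letters: a central operator sandwiched between two operators connected in the product forces them into the same block because $P_i$ annihilates everything not in $\C\xi\oplus\mrx_i$, so a block straddling a $c$-node must contain it.

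The key step, and the one I expect to be the main obstacle, is the explicit moment computation on the reduced free product showing that the set of partitions indexing the surviving terms is exactly $IBNC(\chi)$ and that the coefficient structure matches the M\"obius inversion defining $\kappa_{\chi,\pi}$ — in other words, verifying the moment-cumulant relation $\phi_\xi=\sum_{\pi\in IBNC(\chi)}\kappa_{\chi,\pi}$ for the model. This is where the $c$-letters genuinely interact with the $\ell/r$-letters: one must check that a central operator at position $j$ with $i<j<k$, $i\sim k$ in a surviving configuration really does pin $j$ into the block of $i$ and $k$ (the $\chi$-interval condition), and conversely that every $\pi\in IBNC(\chi)$ arises. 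I would set this up by induction on $n$, peeling off the innermost operator (the one acting first on $\xi$, or using the alternating/reduced word structure), tracking how each $T_k$ changes the leading tensor leg, and using Lemma \ref{Surjective}-style decompositions $IBNC(\chi)\cong NC(\chi_1)\times IBNC(\chi')$ to split the free ($\ell,r$) part from the Boolean ($c$) part, so that the computation reduces to the already-known bi-free moment-cumulant formula on each $NC(\chi_i)$ factor glued by Boolean independence across the $c$-nodes. Once the moment-cumulant formula is established for the model, vanishing of mixed cumulants for non-constant $\omega$ is immediate from the corresponding statement (freeness and Boolean independence of the component algebras $\lambda_i(\LL(\X_i))$ etc. across distinct $i$), and the theorem follows.
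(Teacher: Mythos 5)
Your overall architecture coincides with the paper's: establish the moment--cumulant formula by M\"obius inversion, show that the vanishing condition collapses the sum to partitions $\pi\le\ker\omega$ so that mixed moments become a universal polynomial in the marginal moments (this is exactly Lemma \ref{c-fb}), and then show the operator model satisfies the \emph{same} universal formula, so the two notions coincide. Where you diverge is in the execution of the crucial step, the moment formula for the operator model, which you correctly identify as the main obstacle but only sketch. The paper does \emph{not} run an induction on $n$ by peeling off the innermost operator and tracking tensor legs; instead it inducts on the number of \emph{interior} central positions $|\chi^{-1}(c)\cap[2,n-1]|$. The base case uses the boundary-replacement Lemmas \ref{replace z_n}, \ref{replace z_1} and Corollary \ref{replace z_r} (a central or right operator at position $1$ or $n$ can be exchanged for a left operator without changing the moment, since $P_i\lambda_i(T)P_i=P_i\lambda_i(T)=\lambda_i(T)P_i$ on $\C\xi$), after which the claim is literally the bi-free moment formula of Charlesworth--Nelson--Skoufranis. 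The inductive step rests on one concrete device your sketch is missing: if $l_1$ is the first interior $c$-position, the entire tail $Z_1=z_{l_1}z_{l_1+1}\cdots z_n$ is itself an element of the central face $P_{\omega(l_1)}\lambda_{\omega(l_1)}(\LL(\X_{\omega(l_1)}))P_{\omega(l_1)}$, because the range of $z_{l_1}$ lies in $\C\xi\oplus\mrx_{\omega(l_1)}$; so $\phi(z_1\cdots z_n)=\phi(z_1\cdots z_{l_1-1}Z_1)$ is handled by the base case, the block containing $Z_1$ is re-expanded by the induction hypothesis, and the two expansions are glued back together using the lattice isomorphism $IBNC(\chi)\cong IBNC(\chi_1)\times IBNC(\chi')$ of Proposition \ref{lattice isomorphism} and the multiplicativity of $\mu_{IBNC}$. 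Your stated goal of "bi-free on each $NC(\chi_i)$ factor glued Boolean-wise across the $c$-nodes" is exactly right, but without the tail-collapse observation and the boundary-replacement lemmas the gluing is not actually carried out, and a from-scratch Fock-space computation mixing left, right and compressed operators would be substantially harder than what the paper does. One further small remark: the paper never literally verifies that the operator model is combinatorially free-free-Boolean; it proves both notions satisfy the identical formula $(\bigstar)$, which yields the equivalence directly (and your version, deducing vanishing of mixed cumulants for the model from $(\bigstar)$ by induction on $n$, requires one extra but routine M\"obius-inversion step).
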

It is sufficient to show  to  that mixed  moments  are uniquely determined by lower mixed moments in the same way for combinatorially free-free-Boolean independence and  free-free-Boolean independence.
  By Proposition 10.6 in \cite{NS} and Theorem \ref{multiplicative property}, we have  the following result.
\begin{lemma}\label{Moments-cumulant}  \normalfont  Let  $z_1,\cdots,z_n$ be  random variables in a noncommutative probability space $(\A,\phi)$.  Then
$$\phi(z_1\cdots z_n)=\sum\limits_{\pi\in IBNC(\chi)} \kappa_{\chi,\pi} (z_1,\cdots, z_n).$$
\end{lemma}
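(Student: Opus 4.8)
\textbf{Proof proposal for Lemma \ref{Moments-cumulant}.}
The plan is to invert the defining relation of the free-free-Boolean cumulants by a standard M\"obius-inversion argument, using the lattice structure of $IBNC(\chi)$ established in Proposition \ref{lattice isomorphism} and the multiplicativity of Proposition \ref{Mobius transform2}. First I would fix $\chi:[n]\to\{\ell,c,r\}$ with the total order $\pc$ and the induced notion of $\chi$-noncrossing and $\chi$-interval partitions, so that $IBNC(\chi)$ is the finite lattice we work in. The key observation is that the quantity $\phi(z_1\cdots z_n)$ equals $\phi_{1_n}(z_1,\cdots,z_n)$, and that for each $\pi\in IBNC(\chi)$ the multiplicative map $\phi_\pi$ and the cumulant $\kappa_{\chi,\pi}$ are related by the zeta/M\"obius pair on $IBNC(\chi)^{(2)}$.

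The main steps, in order, are as follows. Step one: recall from the definition that
$\kappa_{\chi,\pi}(z_1,\cdots,z_n)=\sum_{\sigma\le\pi,\ \sigma\in IBNC(\chi)}\mu_{IBNC}(\sigma,\pi)\,\phi_\sigma(z_1,\cdots,z_n)$, i.e. $\kappa_{\chi,\cdot}=\mu_{IBNC}*\phi_{\cdot}$ in the convolution algebra of $IBNC(\chi)^{(2)}$ (viewing both sides as functions of $\pi$ with $z_1,\dots,z_n$ fixed). Step two: convolve both sides with $\zeta$ on the right (or use $\zeta*\mu=\mu*\zeta=\delta$ from Rota's result quoted after Lemma \ref{product of Mobius function}) to obtain the inverse relation
$\phi_\pi(z_1,\cdots,z_n)=\sum_{\sigma\le\pi,\ \sigma\in IBNC(\chi)}\kappa_{\chi,\sigma}(z_1,\cdots,z_n)$
for every $\pi\in IBNC(\chi)$. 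Step three: specialize to $\pi=1_n$, the one-block partition, which lies in $IBNC(\chi)$ (it is trivially both $\chi$-noncrossing and $\chi$-interval) and is the maximal element; then $\phi_{1_n}(z_1,\cdots,z_n)=\phi(z_1\cdots z_n)$ and the sum over $\sigma\le 1_n$ becomes the sum over all $\sigma\in IBNC(\chi)$, giving exactly the claimed identity. The reference to Proposition 10.6 in \cite{NS} together with Theorem \ref{multiplicative property} is what legitimizes treating $\phi_\pi$ and $\kappa_{\chi,\pi}$ as genuine multiplicative functions on the lattice, so that the one-variable M\"obius inversion applies blockwise and globally; Theorem \ref{multiplicative property} supplies the factorization $\kappa_{\chi,\pi}=\prod_s\kappa_{\chi|_{V_s},1_{V_s}}$ that matches the product structure of $\phi_\pi$.

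The step I expect to be the only real point requiring care is verifying that the convolution/inversion is being performed in the correct poset, namely that $\sigma$ ranges over $IBNC(\chi)$ and not over all of $P([n])$: one must check that if $\sigma\le\pi$ and $\pi\in IBNC(\chi)$ then the relevant sums only ever see partitions in $IBNC(\chi)$, which is immediate since $\mu_{IBNC}(\sigma,\pi)$ and $\zeta$ are defined on $IBNC(\chi)^{(2)}$ and vanish off it. After that the argument is a direct transcription of the classical moment-cumulant inversion (as in \cite{NS}), so no genuine obstacle remains; the statement follows.
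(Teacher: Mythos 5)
Your proposal is correct and is essentially the paper's own argument: the paper proves the lemma by citing Proposition 10.6 of \cite{NS} (M\"obius inversion on a finite poset) together with Theorem \ref{multiplicative property}, which is exactly the $\mu*\zeta=\delta$ inversion you carry out explicitly, specialized to $\pi=1_n\in IBNC(\chi)$. No gap; the only cosmetic remark is that the inversion itself needs only the poset structure of $IBNC(\chi)$, not its multiplicativity.
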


For combinatorially free-Boolean independent random variables, we have the following result.
\begin{lemma}\label{c-fb}\normalfont 
Let $\{\A_{i,\ell}, \A_{i,r},\A_{i,c}\}_{i \in I}$ be a family of  combinatorially free-Boolean independent  triples of faces in  a noncommutative probability space $(\A,\phi)$.  Assume that $z_k\in\A_{\omega(k),\chi(k)}$, where $\omega:[n]\rightarrow I$, $\chi:[n]\rightarrow \{\ell, r,c\}$.  Let $\epsilon=\ker \omega$. Then, 
\begin{equation}\label{recursive relation}
\phi(z_1\cdots z_n)=\sum\limits_{\sigma\in IBNC(\chi)} \left(\sum\limits_{\substack{ \pi\in IBNC(\chi)\\ \sigma\leq \pi\leq \epsilon}}\mu_{IBNC}(\sigma,\pi)\right)\phi_{\sigma} (z_1,\cdots, z_n). \tag{$\bigstar$}
\end{equation}
\end{lemma}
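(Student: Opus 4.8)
The plan is to reduce this statement to the moment–cumulant formula of Lemma \ref{Moments-cumulant} together with the vanishing of mixed cumulants for combinatorially free-free-Boolean families. First I would apply Lemma \ref{Moments-cumulant} to write $\phi(z_1\cdots z_n)=\sum_{\pi\in IBNC(\chi)}\kappa_{\chi,\pi}(z_1,\cdots,z_n)$. Next, using the multiplicative property of Theorem \ref{multiplicative property}, I expand $\kappa_{\chi,\pi}=\prod_{s}\kappa_{\chi|_{V_s},1_{V_s}}$ over the blocks $V_s$ of $\pi$. Now the key observation: since $z_k\in\A_{\omega(k),\chi(k)}$ and the family is combinatorially free-free-Boolean independent, the block factor $\kappa_{\chi|_{V_s},1_{V_s}}(z_1,\cdots,z_n)$ vanishes unless $\omega$ is constant on $V_s$, i.e. unless $V_s$ is contained in a block of $\epsilon=\ker\omega$. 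Hence $\kappa_{\chi,\pi}(z_1,\cdots,z_n)=0$ unless $\pi\leq\epsilon$. This lets me restrict the sum in the moment–cumulant formula to those $\pi\in IBNC(\chi)$ with $\pi\leq\epsilon$, giving $\phi(z_1\cdots z_n)=\sum_{\substack{\pi\in IBNC(\chi)\\ \pi\leq\epsilon}}\kappa_{\chi,\pi}(z_1,\cdots,z_n)$.

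Then I would unfold the definition of $\kappa_{\chi,\pi}$, namely $\kappa_{\chi,\pi}(z_1,\cdots,z_n)=\sum_{\substack{\sigma\leq\pi\\ \sigma\in IBNC(\chi)}}\mu_{IBNC}(\sigma,\pi)\phi_{\sigma}(z_1,\cdots,z_n)$, and substitute into the restricted sum. Interchanging the order of summation over $\pi$ and $\sigma$, I collect the coefficient of each $\phi_{\sigma}(z_1,\cdots,z_n)$: the variable $\sigma$ ranges over all of $IBNC(\chi)$ (subject to $\sigma\leq\pi\leq\epsilon$ for some admissible $\pi$, which forces nothing beyond $\sigma\in IBNC(\chi)$ once we also note $\sigma\leq\pi\leq\epsilon$ pins down the range of $\pi$), and the inner sum becomes $\sum_{\substack{\pi\in IBNC(\chi)\\ \sigma\leq\pi\leq\epsilon}}\mu_{IBNC}(\sigma,\pi)$. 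This is exactly the right-hand side of \eqref{recursive relation}. One should be slightly careful that the condition "$\sigma\leq\pi$ for some $\pi\in IBNC(\chi)$ with $\pi\leq\epsilon$" is equivalent to "$\sigma\in IBNC(\chi)$", which holds because $\sigma\leq\sigma$; so no $\sigma\in IBNC(\chi)$ is lost, and if $\sigma$ admits no $\pi$ in the range the inner coefficient is an empty sum equal to $0$, consistent with the stated formula.

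The main obstacle, and the only place needing genuine care, is justifying that $\kappa_{\chi|_{V_s},1_{V_s}}(z_1,\cdots,z_n)$ really is a mixed cumulant in the sense of the definition of combinatorial independence whenever $\omega$ is non-constant on $V_s$. Here I would note that $\kappa_{\chi|_{V_s},1_{V_s}}$ depends only on the entries $z_k$ with $k\in V_s$, that $\chi|_{V_s}$ is a map on (an order-isomorphic copy of) a set $[|V_s|]$, and that $\omega|_{V_s}$ being non-constant is precisely the hypothesis under which the combinatorial free-free-Boolean condition forces the cumulant to vanish; this is a direct application of the defining property together with the observation that $IBNC(\chi|_{V_s})$ and its M\"obius function transform naturally under the order isomorphism $V_s\cong[|V_s|]$. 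Everything else — the moment–cumulant expansion, the multiplicativity over blocks, and the interchange of summation — is routine. I would therefore present the argument in the order: (1) moment–cumulant formula; (2) block factorization of $\kappa_{\chi,\pi}$ and vanishing unless $\pi\leq\epsilon$; (3) substitute the definition of $\kappa_{\chi,\pi}$ and swap sums to obtain \eqref{recursive relation}.
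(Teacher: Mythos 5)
Your proposal is correct and follows the paper's own argument essentially verbatim: moment--cumulant expansion (Lemma \ref{Moments-cumulant}), block factorization via Theorem \ref{multiplicative property}, vanishing of mixed block cumulants forcing $\pi\leq\epsilon$, and then substituting the definition of $\kappa_{\chi,\pi}$ and interchanging the sums. The extra remarks about empty inner sums and the order isomorphism $V_s\cong[|V_s|]$ are fine but not needed beyond what the paper does.
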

\begin{proof}
By Lemma \ref{Moments-cumulant}, we have 
$$\phi(z_1\cdots z_n)=\sum\limits_{\pi\in IBNC(\chi)} \kappa_{\chi,\pi} (z_1,\cdots ,z_n).$$
For each $\pi\in IBNC(\chi)$, assume that $\pi=\{V_1,\cdots,V_t\}$. By Theorem \ref{multiplicative property}, we have  
$$\kappa_{\chi,\pi} (z_1\cdots z_n)=\prod\limits_{s=1}^t \kappa_{\chi|_{V_s},1_{V_s}}(z_1,\cdots,z_n). $$
Since the family $\{\A_{i,\ell}, \A_{i,r},\A_{i,c}\}_{i \in I}$ is combinatorially free-free-Boolean independent,
$$\kappa_{\chi|_{V_s},1_{V_s}}(z_1,\cdots,z_n)=0$$
if $\omega$ is not a  constant on ${V_s}$.   It follows that $\kappa_{\chi,\pi}(z_1,\cdots,z_n)\neq 0$ only if $\omega$ is  a constant on $|_{V_s}$ for all $s$, which implies that $V_s$ is contained in a block of $\epsilon$ for all $s$, i.e.,  $\pi\leq \epsilon$.
 Therefore, we have
$$
\begin{array}{rcl}
\phi(z_1\cdots z_n)
&=&\sum\limits_{\pi\in IBNC(\chi),\pi\leq \epsilon} \kappa_{\chi,\pi} (z_1,\cdots, z_n)\\
                               &=&\sum\limits_{\pi\in IBNC(\chi),\pi\leq \epsilon} \left(\sum\limits_{\substack{ \sigma\in IBNC(\chi)\\ \sigma\leq \pi}}\mu_{IBNC}(\sigma,\pi)\phi_{\sigma} (z_1,\cdots, z_n)\right)\\
                               &=&\sum\limits_{\sigma\in IBNC(\chi)} \left(\sum\limits_{\substack{ \pi\in IBNC(\chi)\\ \sigma\leq \pi\leq \epsilon}}\mu_{IBNC}(\sigma,\pi)\right)\phi_{\sigma} (z_1,\cdots ,z_n).\\
\end{array}
$$
This finishes the proof.
\end{proof}

Now, we suppose that the family $\{\A_{i,\ell}, \A_{i,r},\A_{i,c}\}_{i \in I}$ is  free-free-Boolean independent in $(\A,\phi)$.
We  assume that $z_k\in\A_{\omega(k),\chi(k)}$, where $\omega:[n]\rightarrow I$, $\chi:[n]\rightarrow \{\ell, r,c\}$. Let $\epsilon$ be the kernel of $\omega$. Let $\chi_1$ and $\epsilon_1$ be the restriction of $\chi$ and $\epsilon$ to the first interval $\{1,\cdots, l_1\}$ respectively.  
Let $\chi_1'$ and $\epsilon_1'$ be the restrictions of $\chi$ and $\epsilon$ to the first interval $\{l_1,\cdots, n\}$ respectively.  We need to show that  the the mixed moments $\phi(z_1\cdots z_n)$ can be determined in the same way as in Lemma \ref{c-fb}. 

It is sufficient to consider the case that $\A=\LL(\X)$,  $\mathcal{A}_{i,\ell} =\lambda_i(\mathcal{L}(\X_i))$, $\mathcal{A}_{i,r} =\lambda_i(\mathcal{L}(\X_i))$ and $\mathcal{A}_{i,c} =P_{i}\lambda_i(\mathcal{L}(\X_i))P_{i}$, where   $(\X_i, \mrx_i,\xi_i)_{i\in I}$ is a family of vector spaces with specified vectors, $(\mathcal{X}, \mrx,\xi)$ is their  reduced free product and  $\phi=\phi_\xi$ is the functional associated with $\xi$ on $\X$.

As the free-Boolean case in \cite{Liu3}, we prove the mixed moments formula $(\bigstar)$ in Lemma \ref{c-fb}   by  induction on the number of elements  of  $\chi^{-1}(\circ)\cap[2,n-1]$.

\begin{lemma}\label{replace z_n} \normalfont  Let $z_1\in \A_{i,c}$ and $z_2\in \A_{j,c}$ for some $i,j\in \I$. Then, there exist $T_1\in \A_{i,\ell}$ and $T_2\in\A_{j,\ell}$ such that 
$$\phi(z_1z z_n)= \phi(T_1zT_2).$$
for all $z\in\A.$ Moreover, $T_1$ and $T_2$ are uniquely determined by $z_1$ and $z_2$ respectively.
 \end{lemma}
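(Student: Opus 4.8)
The plan is to realise $z_1$ and $z_2$ by the ordinary left operators that ``sit underneath'' them, and to observe that $z_1$ and $z_2$ only ever meet the functional $\phi$ in two specific ways — $z_1$ is applied to an arbitrary vector which is afterwards evaluated by $\phi$, while $z_2$ is applied directly to the specified vector $\xi$ — and that in each of these two situations the central operator may be traded for the corresponding left operator. Since these two situations are governed by different mechanisms, I would treat the left end and the right end separately.

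First I would fix notation. Since $z_1\in\A_{i,c}=P_i\lambda_i(\LL(\X_i))P_i$ and $z_2\in\A_{j,c}=P_j\lambda_j(\LL(\X_j))P_j$, write $z_1=P_i\lambda_i(a)P_i$ and $z_2=P_j\lambda_j(b)P_j$; by Remark \ref{Largest triple} the homomorphisms $P_i\lambda_i(\cdot)P_i$ and $P_j\lambda_j(\cdot)P_j$ are injective, so $a\in\LL(\X_i)$ and $b\in\LL(\X_j)$ are uniquely determined by $z_1$ and by $z_2$. Then set $T_1:=\lambda_i(a)\in\A_{i,\ell}$ and $T_2:=\lambda_j(b)\in\A_{j,\ell}$; these depend only on $z_1$ and on $z_2$ respectively, which will give the uniqueness clause.

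Next I would establish the \emph{left replacement}: $\phi(z_1\eta)=\phi(T_1\eta)$ for every $\eta\in\X$. The point is that $P_i$ is the projection of $\X=\C\xi\oplus\mrx$ onto $\C\xi\oplus\mrx_i$ that kills every remaining direct summand, so $I-P_i$ maps $\X$ into $\mrx=\ker\phi$ and hence $\phi\circ P_i=\phi$; combining this with $P_i\lambda_i(a)=\lambda_i(a)P_i$ (Proposition \ref{simple Boolean}), which gives $z_1=P_i\lambda_i(a)P_i=P_i\lambda_i(a)$, one gets $\phi(z_1\eta)=\phi\big(P_i(\lambda_i(a)\eta)\big)=\phi(\lambda_i(a)\eta)=\phi(T_1\eta)$. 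Then I would establish the \emph{right replacement}: $z_2\xi=T_2\xi$. Here $P_j\xi=\xi$ because $\xi\in\C\xi\oplus\mrx_j$, and $\lambda_j(b)\xi=V_j\big((b\xi_j)\otimes\xi\big)$ lies in $V_j(\X_j\otimes\xi)=\C\xi\oplus\mrx_j$ — the identity $V_j^{-1}(\C\xi\oplus\mrx_j)=\X_j\otimes\xi$ being the one recorded in the proof of Proposition \ref{simple Boolean} — so $P_j$ also fixes $\lambda_j(b)\xi$, whence $z_2\xi=P_j\lambda_j(b)P_j\xi=P_j\lambda_j(b)\xi=\lambda_j(b)\xi=T_2\xi$.

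Finally I would put the two halves together: using $\phi(S)=\phi(S\xi)$ for $S\in\LL(\X)$, for each $z\in\A=\LL(\X)$,
\[
\phi(z_1zz_2)=\phi\big(z_1\,(zz_2\xi)\big)=\phi\big(T_1\,(zz_2\xi)\big)=\phi\big(T_1z\,(z_2\xi)\big)=\phi\big(T_1z\,(T_2\xi)\big)=\phi(T_1zT_2),
\]
where the second equality is the left replacement applied to $\eta=zz_2\xi$ and the fourth is the right replacement, and the lemma follows. The argument is essentially bookkeeping with the isomorphisms $V_i$, and I do not expect a genuine obstacle; the one thing worth underlining is the asymmetry between the ends — the outermost central factor on the left can be swapped for nothing because $\phi$ is blind to the outer $P_i$, whereas the outermost central factor on the right must be swapped through its action on $\xi$, where $P_j$ is harmless only because $\lambda_j(b)$ already keeps $\xi$ inside $\C\xi\oplus\mrx_j$. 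A central factor in an interior position cannot be replaced this way, which is precisely why interior $c$-nodes are handled by the separate induction on $|\chi^{-1}(c)\cap[2,n-1]|$ rather than by this lemma.
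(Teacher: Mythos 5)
Your proof is correct and follows essentially the same route as the paper's: write $z_1=P_i\lambda_i(a)P_i$, $z_2=P_j\lambda_j(b)P_j$, use Proposition \ref{simple Boolean} to absorb the inner projections, note that $\phi$ is unaffected by the outer $P_i$ on the left and that $\xi$ (and $\lambda_j(b)\xi$) is fixed by $P_j$ on the right. The only cosmetic difference is that the paper phrases the left-hand absorption via the rank-one projection $p$ onto $\C\xi$ and writes $z_2=T_2P_j$ directly from the commutation relation, while you argue the right end through the invariance of $\C\xi\oplus\mrx_j$; your explicit treatment of the uniqueness clause via injectivity of $P_i\lambda_i(\cdot)P_i$ is a welcome addition the paper leaves implicit.
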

\begin{proof}
By definition of $\A_{i,c}$ and $\A_{j,c}$,  $z_1=P_iT_1P_i$, $z_2=P_jT_2P_j$ for some $T_1\in \A_{i,\ell}$, $T_2\in\A_{j,\ell}$. By Proposition \ref{simple Boolean}, $z_1=P_iT_1$, $z_2=T_2P_j$ Let $p$ be the projection onto $\C\xi$ and vanishes on $\mrx$. Then $pP_j=p$, thus
$$pz_1z z_n\xi= pP_iT_1zT_2P_j\xi =pT_1zT_2\xi.$$
The result follows from the definition of $\phi$.
\end{proof}

The same we have the following statement for $A_{i,c}$ and $A_{i,r}$.
\begin{lemma}\label{replace z_1} Let $z_1\in \A_{i,c}$ and $z_2\in \A_{j,c}$ for some $i,j\in \I$. Then, there exist $T_1\in \A_{i,r}$ and $T_2\in\A_{j,r}$ such that 
$$\phi(z_1z z_n)= \phi(T_1zT_2).$$
for all $z\in\A.$ Moreover, $T_1$ and $T_2$ are uniquely determined by $z_1$ and $z_2$ respectively.
\end{lemma}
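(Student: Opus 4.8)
The plan is to transcribe the proof of Lemma~\ref{replace z_n}, exchanging the left regular representations $\lambda_i$ and Proposition~\ref{simple Boolean} for the right regular representations $\rho_i$ and the Corollary following it. By the definition of $\A_{i,c}$ and $\A_{j,c}$, together with the fact (Proposition~2.4 in \cite{Liu3}) that $P_i\lambda_i(\cdot)P_i$ and $P_i\rho_i(\cdot)P_i$ are the same homomorphism $\LL(\X_i)\to\LL(\X)$, I would first write $z_1=P_i\rho_i(S_1)P_i$ and $z_2=P_j\rho_j(S_2)P_j$ for operators $S_1\in\LL(\X_i)$ and $S_2\in\LL(\X_j)$, which are unique because $P_i\rho_i(\cdot)P_i$ is injective (Remark~\ref{Largest triple}).

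Using the Corollary $P_i\rho_i(a)=\rho_i(a)P_i$ and $P_i^2=P_i$, these expressions collapse to $z_1=P_i\rho_i(S_1)$ and, symmetrically, $z_2=\rho_j(S_2)P_j$. Letting $p$ be the projection of $\X$ onto $\C\xi$ that vanishes on $\mrx$, the relations $pP_i=p$ (since $P_i$ acts as the identity on $\C\xi\oplus\mrx_i$) and $P_j\xi=\xi$ give, for any $z\in\A$,
\[
p\,z_1\,z\,z_2\,\xi \;=\; p\,P_i\,\rho_i(S_1)\,z\,\rho_j(S_2)\,P_j\,\xi \;=\; p\,\rho_i(S_1)\,z\,\rho_j(S_2)\,\xi ,
\]
so that, by the definition of $\phi$, we obtain $\phi(z_1 z z_2)=\phi(T_1 z T_2)$ with $T_1:=\rho_i(S_1)\in\A_{i,r}$ and $T_2:=\rho_j(S_2)\in\A_{j,r}$.

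For the uniqueness assertion I would note that $z_1$ determines $S_1$ (injectivity of $P_i\rho_i(\cdot)P_i$) and $S_1$ determines $T_1=\rho_i(S_1)$ (injectivity of $\rho_i$, again by Remark~\ref{Largest triple}), hence $T_1$ is a function of $z_1$ alone, and symmetrically for $T_2$. I do not expect a genuine obstacle: the argument is the mirror image of Lemma~\ref{replace z_n}. The only thing requiring attention is the bookkeeping of the three projections --- one must cancel $P_i$ against $p$ on the left and $P_j$ against $\xi$ on the right, and the redundant inner copies of $P_i$ and $P_j$ must be removed first via the $\rho$-version of the ``$P$ commutes with the representation'' identity rather than via Proposition~\ref{simple Boolean} itself.
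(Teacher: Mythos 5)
Your proof is correct and is exactly the argument the paper intends: the paper gives no separate proof of this lemma, stating only that it follows ``the same'' as Lemma~\ref{replace z_n}, and your transcription with $\rho_i$ in place of $\lambda_i$ (together with the identification $P_i\lambda_i(\cdot)P_i=P_i\rho_i(\cdot)P_i$ needed to pass from $\A_{i,c}$ to the right representations) is the intended mirror image. If anything, your bookkeeping of the projections ($pP_i=p$ on the left, $P_j\xi=\xi$ on the right) is more careful than the printed proof of Lemma~\ref{replace z_n}.
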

By Lemma \ref{replace z_n} and Lemma \ref{replace z_1}, we have the following result.
\begin{corollary}\label{replace z_r}
 Let $z_1\in \A_{i,r}$ and $z_2\in \A_{j,r}$ for some $i,j\in \I$. Then, there exist $T_1\in \A_{i,\ell}$ and $T_2\in\A_{j,\ell}$ such that 
$$\phi(z_1z z_n)= \phi(T_1zT_2).$$
for all $z\in\A.$ Moreover, $T_1$ and $T_2$ are uniquely determined by $z_1$ and $z_2$ respectively.
\end{corollary}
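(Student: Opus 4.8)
The strategy is to push the two right-face operators through the middle (``$c$'') face and out again as left-face operators, using Lemma~\ref{replace z_1} for the first step and Lemma~\ref{replace z_n} for the second. Recall from Remark~\ref{Largest triple} that $\lambda_i$, $\rho_i$ and $P_i\lambda_i(\cdot)P_i$ are injective, so $\A_{i,\ell}=\lambda_i(\LL(\X_i))$, $\A_{i,r}=\rho_i(\LL(\X_i))$ and $\A_{i,c}=P_i\lambda_i(\LL(\X_i))P_i=P_i\rho_i(\LL(\X_i))P_i$ are each in canonical bijection with $\LL(\X_i)$ via the underlying operator. Given $z_1\in\A_{i,r}$ and $z_2\in\A_{j,r}$, write $z_1=\rho_i(a)$ and $z_2=\rho_j(b)$ with $a\in\LL(\X_i)$, $b\in\LL(\X_j)$ uniquely determined, and set $w_1=P_i\rho_i(a)P_i\in\A_{i,c}$, $w_2=P_j\rho_j(b)P_j\in\A_{j,c}$.

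First I would apply Lemma~\ref{replace z_1} to the pair $w_1,w_2$. Tracing its proof (the analogue of the proof of Lemma~\ref{replace z_n} with $\rho$ in place of $\lambda$), the operators of $\A_{i,r}$ and $\A_{j,r}$ it returns are precisely $\rho_i(a)=z_1$ and $\rho_j(b)=z_2$, since the correspondence there keeps the underlying operator and only changes the representation; hence $\phi(w_1 z w_2)=\phi(z_1 z z_2)$ for all $z\in\A$. Then I would apply Lemma~\ref{replace z_n} to the same pair $w_1,w_2\in\A_{\cdot,c}$, obtaining $T_1=\lambda_i(a)\in\A_{i,\ell}$ and $T_2=\lambda_j(b)\in\A_{j,\ell}$ with $\phi(w_1 z w_2)=\phi(T_1 z T_2)$ for all $z\in\A$. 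Chaining the two identities gives $\phi(z_1 z z_2)=\phi(T_1 z T_2)$, which is the assertion.

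To keep the argument self-contained one can also verify the chain by hand; this simultaneously shows that the element $w_1$ is the same in both applications. With $p$ the rank-one projection onto $\C\xi$ along $\mrx$, Proposition~\ref{simple Boolean} and its Corollary give $P_i\lambda_i(a)=\lambda_i(a)P_i$ and $P_i\rho_i(a)=\rho_i(a)P_i$, and since $P_i\lambda_i(\cdot)P_i$ and $P_i\rho_i(\cdot)P_i$ are the same homomorphism we get $P_i\rho_i(a)=P_i\lambda_i(a)$; moreover $p=pP_i$ and $P_j\xi=\xi$. Hence $p\,\rho_i(a)=p\,P_i\rho_i(a)=p\,P_i\lambda_i(a)=p\,\lambda_i(a)$ and $\rho_j(b)\xi=\rho_j(b)P_j\xi=\lambda_j(b)P_j\xi=\lambda_j(b)\xi$, so
$$\phi(z_1 z z_2)=p\,\rho_i(a)\,z\,\rho_j(b)\,\xi=p\,\lambda_i(a)\,z\,\lambda_j(b)\,\xi=\phi(\lambda_i(a)\,z\,\lambda_j(b)),$$
and we take $T_1=\lambda_i(a)$, $T_2=\lambda_j(b)$.

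For uniqueness, the composite correspondence is $z_1=\rho_i(a)\mapsto T_1=\lambda_i(a)$; it depends on $z_1$ alone and is injective, being a composition of the bijections above, and similarly $z_2\mapsto T_2$, which is exactly the claimed ``uniquely determined''. I expect the only real obstacle here to be bookkeeping: checking that the element $w_1$ obtained by reversing Lemma~\ref{replace z_1} is the very one fed into Lemma~\ref{replace z_n}, and that the projection identities $p=pP_i$, $P_i\rho_i(a)=P_i\lambda_i(a)$ and $P_j\xi=\xi$ are applied correctly. There is no analytic content beyond Proposition~\ref{simple Boolean}, its Corollary, and the coincidence of the two compressed homomorphisms.
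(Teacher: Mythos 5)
Your proposal is correct and follows essentially the same route as the paper, which simply derives the corollary by chaining Lemma~\ref{replace z_1} (passing from the $r$-faces to the corresponding $c$-face elements with the same underlying operators) with Lemma~\ref{replace z_n} (passing from those $c$-face elements to the $\ell$-faces). Your explicit verification via $p=pP_i$, $P_i\rho_i(a)=P_i\lambda_i(a)$ and $P_j\xi=\xi$ is exactly the computation underlying the paper's two lemmas, so nothing is missing.
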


In this case of $|\chi^{-1}(\circ)\cap[2,n-1]|=0$,  $\chi$ can be $\circ$ only at $1$ and $n$.  Thus, $IBNC(\chi)=NC(\chi)$ which is isomorphic to the set of bi-noncrossing partitiona $BNC(\bar\chi)$ defined in \cite{CNS} where
$$\bar\chi(i)=\left\{\begin{array}{ll}
\ell&\text{if}\quad\chi(i)=\ell,c,\\
r&\text{if}\quad\chi(i)=r.\\
\end{array}\right.
$$
Therefore, we have the following result.

\begin{lemma}\label{Induction 1} \normalfont When $|\chi^{-1}(\circ)\cap[2,n-1]|=0$, we have 
$$\phi(z_1\cdots z_n)=\sum\limits_{\sigma\in IBNC(\chi)} \left(\sum\limits_{\substack{ \pi\in IBNC(\chi)\\ \sigma\leq \pi\leq \epsilon}}\mu_{IBNC}(\sigma,\pi)\right)\phi_{\sigma} (z_1\cdots z_n).\\$$
\end{lemma}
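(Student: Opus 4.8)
The plan is to reduce the claim to the corresponding moment-cumulant statement for bi-free (equivalently, bi-noncrossing) combinatorics, which is already available from \cite{CNS}. When $|\chi^{-1}(c)\cap[2,n-1]|=0$, the letter $c$ can occur only at the endpoints $1$ and $n$, so by the Lemma preceding Definition 3.2 (the case $\chi^{-1}(c)\subset\{1,n\}$) we have $IBNC(\chi)=NC(\chi)$, and $NC(\chi)$ is canonically identified with $BNC(\bar\chi)$, where $\bar\chi$ replaces each $c$ by $\ell$. Under this identification the reversed refinement order, the M\"obius function, and the partition-indexed functionals $\phi_\sigma$ all match up, so it suffices to prove the stated formula with $IBNC(\chi)$ replaced everywhere by $BNC(\bar\chi)$ and with $\phi$ the functional $\phi_\xi$ on $\LL(\X)$.

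First I would dispose of the genuinely new feature, namely the $c$-letters at the endpoints. If $\chi(1)=c$, then $z_1\in\A_{\omega(1),c}=P_{\omega(1)}\lambda_{\omega(1)}(\LL(\X_{\omega(1)}))P_{\omega(1)}$; similarly if $\chi(n)=c$. Using Lemma \ref{replace z_n}, Lemma \ref{replace z_1} and Corollary \ref{replace z_r}, I would replace $z_1$ (and/or $z_n$) by an operator $T_1\in\A_{\omega(1),\ell}$ (resp.\ $T_n\in\A_{\omega(n),\ell}$), with the same index, so that $\phi(z_1\cdots z_n)=\phi(T_1z_2\cdots z_{n-1}T_n)$ and the new word has $\bar\chi$-type at positions $1$ and $n$; crucially this replacement does not alter $\ker\omega$, hence does not alter $\epsilon$, and it also does not alter $\phi_\sigma$ for any $\sigma$ (because $\phi_\sigma$ is built from the same sub-products, and the endpoint replacements are compatible with passing to the sub-word indexed by any block containing $1$ or $n$ — this is exactly what the ``moreover, $T_1$ is uniquely determined'' clauses give, applied blockwise via Lemma \ref{replace z_n}/\ref{replace z_1}). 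After this reduction, all the $z_k$ lie in $\A_{\omega(k),\ell}$ or $\A_{\omega(k),r}$, i.e.\ in the left or right regular representations of $\LL(\X_{\omega(k)})$.

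At that point the claim is the bi-free moment-cumulant recursion. Concretely, by Lemma \ref{Moments-cumulant} applied with the single order $\prec_{\bar\chi}$ (so $IBNC(\chi)=NC(\bar\chi)$), $\phi(T_1z_2\cdots z_{n-1}T_n)=\sum_{\pi\in NC(\bar\chi)}\kappa_{\chi,\pi}$; by Theorem \ref{multiplicative property} each $\kappa_{\chi,\pi}$ factorizes over the blocks of $\pi$; and since $\{(\A_{i,\ell},\A_{i,r})\}$ is bi-free (the bi-free part of free-free-Boolean independence, as recorded in Definition \ref{bifree-boolean} and the routine verification following Remark 2.12), the mixed bi-free cumulants vanish, so $\kappa_{\chi|_{V_s},1_{V_s}}=0$ unless $\omega$ is constant on $V_s$; hence only $\pi\le\epsilon$ survive. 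Re-expanding $\kappa_{\chi,\pi}=\sum_{\sigma\le\pi}\mu_{IBNC}(\sigma,\pi)\phi_\sigma$ and interchanging the order of summation over $\sigma$ and $\pi$ gives exactly
$$\phi(z_1\cdots z_n)=\sum_{\sigma\in IBNC(\chi)}\Bigl(\sum_{\substack{\pi\in IBNC(\chi)\\ \sigma\le\pi\le\epsilon}}\mu_{IBNC}(\sigma,\pi)\Bigr)\phi_\sigma(z_1,\cdots,z_n),$$
which is the assertion. The one point needing care — and the main obstacle — is justifying that the endpoint substitution of Lemma \ref{replace z_n}/\ref{replace z_1} is compatible with the whole block structure simultaneously (so that $\phi_\sigma$ is genuinely unchanged for every $\sigma$, not just for the full word), since $\phi_\sigma$ involves sub-products in which $1$ and $n$ may sit inside the same block or different blocks; I would handle this by checking directly from $P_i\lambda_i(a)=\lambda_i(a)P_i$ (Proposition \ref{simple Boolean}) and $pP_j=p$ that for any sub-word $z_{j_1}\cdots z_{j_k}$ with $j_1=1$ or $j_k=n$ one has $\phi(z_{j_1}\cdots z_{j_k})=\phi(\tilde z_{j_1}\cdots\tilde z_{j_k})$ after the corresponding replacements, which is the same computation as in the two lemmas.
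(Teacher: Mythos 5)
Your proposal is correct and follows essentially the same route as the paper: replace the endpoint $c$-elements by left-face elements via Lemma \ref{replace z_n}, Lemma \ref{replace z_1} and Corollary \ref{replace z_r}, identify $IBNC(\chi)$ with $BNC(\bar\chi)$, invoke the known bi-free moment--cumulant formula of \cite{CNS1}, and substitute back. The blockwise compatibility of the endpoint substitution with every $\phi_\sigma$, which you rightly flag and verify via $P_i\lambda_i(a)=\lambda_i(a)P_i$ and $pP_j=p$, is exactly the point the paper's proof passes over with ``we replace $z_1$ and $z_n$ back in the last equality.''
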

\begin{proof}
By Lemma \ref{replace z_n}, Lemma \ref{replace z_n} and Corollary \ref{replace z_r}, we have 
$$ \phi(z_1\cdots z_n)=\phi(T_1z_2\cdots z_{n-1}T_2),$$
for $T_1\in \A_{\omega(1),\ell}$ and $ T_2\in\A_{\omega(n),\ell}$. 
Of course, if $z_1\in \A_{\omega(1),\ell}$, then the just let $T_1=z_1.$ The same to $z_n$. 
Let $\bar{\chi}:[n]\rightarrow\{\ell,r,c\}$ such that $\bar{\chi}(k)=\chi(k)$ for $k=2,\cdots,n-1$ and $\bar{\chi}(1)=\bar{\chi}(n)=\ell$. Since $\chi^{-1}(\circ)\cap[2,n-1]=\emptyset$, $\bar{\chi}$ is a map from $[n]$ to $\{\ell,r\}$. Notice that  the family $\{(\mathcal{A}_{i,\ell},\mathcal{A}_{i,r})\}$ is bi-freely independent,  by Theorem 2.3.2 in \cite{CNS1}, we have 
 $$
\begin{array}{rcl}
&&\phi(T_1z_2\cdots z_{n-1}T_2)\\
   &=&\sum\limits_{\sigma\in BNC(\bar\chi)} \left(\sum\limits_{\substack{ \pi\in BNC(\bar\chi)\\ \sigma\leq \pi\leq \epsilon}}\mu_{BNC(\bar\chi)}(\sigma,\pi)\right)\phi_{\sigma} (T_1,z_2,\cdots ,z_{n-1},T_2)\\
   &=&\sum\limits_{\sigma\in IBNC(\chi)} \left(\sum\limits_{\substack{ \pi\in IBNC(\chi)\\ \sigma\leq \pi\leq \epsilon}}\mu_{IBNC}(\sigma,\pi)\right)\phi_{\sigma} (z_1,\cdots ,z_n).
\end{array} 
$$
By  Lemma \ref{replace z_n}, Lemma \ref{replace z_n} and Corollary \ref{replace z_r}, we replace $z_1$ and $z_n$ back in last equality. The proof is complete.
\end{proof}

Now we are ready to prove our main theorem. 
\begin{proof}[Proof of Theorem \ref{Main}]
Suppose that  Equation $(\bigstar)$ in Lemma \ref{c-fb} holds  whenever $|\chi^{-1}(\circ)\cap[2,n-1]|=m-2$.  For $|\chi^{-1}(\circ)\cap[2,n-1]|=m-1$,  we can assume $\chi^{-1}(\circ)=\{0<l_1<\cdots<l_{m-1}<n\}$, $l_0=1$ and $l_m=n$.

Let $Z_1=\prod\limits_{i=l_1}^n z_i$.  Since  the range of $z_{l_1}$ is  $\C\xi\oplus \mrx_{\omega(l_1)}$,  $Z_1$ is a linear map from $\C\xi\oplus \mrx_{\omega(l_1)}$  to $\C\xi\oplus \mrx_{\omega(l_1)}$.  Moreover,  $Z_1$ vanishes on all summands of $\X$ except for $\C\xi\oplus \mrx_{\omega(l_1)}$. Therefore,  $Z_1$ is an element in $P_{\omega(l_1)}\lambda_{\omega(l_1)}(\LL(\X_{\omega(l_1)}))P_{\omega(l_1)}$. \\
By Lemma \ref{Induction 1}, we have
$$
\begin{array}{rcl}
\phi(z_1\cdots z_n)&=&\phi(z_1\cdots z_{l_1-1}Z_1)\\
                               &=&\sum\limits_{\sigma_1\in IBNC(\chi_1),\sigma_1\leq \epsilon_1} \left(\sum\limits_{\substack{ \sigma_1\in IBNC(\chi_1)\\ \sigma_1\leq \pi_1\leq \epsilon_1}}\mu_{IBNC}(\sigma_1,\pi_1)\right)\phi_{\sigma_1}(z_1,\cdots ,z_{l_1-1},Z_{1}).\\
\end{array}
$$
Fix $\sigma_1$,  let  $V$ be the block of $\sigma_1$ which contains $l_1$. Then,
$$ 
\begin{array}{rcl}
\phi_{\sigma_1}(z_1,\cdots ,z_{l_1-1},Z_{1})&=&\prod\limits_{W\in \sigma_1} \phi_{W}(z_1,\cdots, z_{l_1-1},Z_1)\\
&=&\left[\prod\limits_{W\neq V} \phi_{W}(z_1,\cdots, z_{l_1-1})\right]\phi_{V}(z_1,\cdots ,z_{l_1-1},Z_{1})\\
\end{array}
$$
the last equality follows from that  $l_1\not\in W$ whenever $W\neq V$. 

Let $Z^V_{l_1}=\prod\limits_{i\in V}z_i$, where the product is taken with the original order. Then,
$$ \phi_{V}(z_1,\cdots ,z_{l_1-1},Z_{1})=\phi(Z_{l_1} ^V z_{l_1+1}z_{l_1+2}\cdots z_n).$$
Notice that $|\chi^{-1}[l_1+1,n-1]|=m-2$ . By assumption, we have
$$\phi(Z_{l_1} ^V z_{l_1+1}z_{l_1+2}\cdots z_n)=\sum\limits_{\sigma'\in IBNC(\chi')} \left(\sum\limits_{\substack{ \pi'\in IBNC(\chi')\\ \sigma'\leq \pi'\leq \epsilon'}}\mu_{IBNC}(\sigma',\pi')\right)\phi_{\sigma'} (Z_{l_1} ^V ,z_{l_1+1},z_{l_1+2},\cdots ,z_n). $$

Suppose that $l_1\in V'\in\sigma'$. Then
$$ 
\begin{array}{rcl}
\phi_{\sigma'} (Z_{l_1} ^V, z_{l_1+1},z_{l_1+2},\cdots, z_n)&=&\prod\limits_{W\in \sigma'} \phi_{W}(Z_{l_1} ^V ,z_{l_1+1},z_{l_1+2},\cdots ,z_n)\\
&=&\left[\prod\limits_{W\neq V'} \phi_{W}((z_{l_1+1},\cdots, z_{n})\right]\phi_{V'}(Z_{l_1} ^V ,z_{l_1+1},z_{l_1+2},\cdots,z_n).\\
\end{array}
$$

Therefore,
$$\begin{array}{rcl}
&&\phi(z_1\cdots z_n)\\

&=&\sum\limits_{\substack{\sigma_1\in IBNC(\chi_1)\\\sigma_1\leq \epsilon_1}} \left(\sum\limits_{\substack{ \sigma_1\in BNC(\chi_1)\\ \sigma_1\leq \pi_1\leq \epsilon_1}}\mu_{IBNC}(\sigma_1,\pi_1)\right)\left[\prod\limits_{\substack{W\in \sigma_1\\W\neq V\ni l_1}} \phi_{W}(z_1,\cdots, z_{l_1-1})\right]\phi_{V}(z_1,\cdots, z_{l_1-1},Z_{1})\\

&=&\sum\limits_{\substack{\sigma_1\in IBNC(\chi_1)\\\sigma_1\leq \epsilon_1}} \left(\sum\limits_{\substack{ \sigma_1\in BNC(\chi_1)\\ \sigma_1\leq \pi_1\leq \epsilon_1}}\mu_{IBNC}(\sigma_1,\pi_1)\right)
\Bigg\{\left[\prod\limits_{\substack{W\in \sigma_1\\W\neq V\ni l_1}} \phi_{W}(z_1,\cdots, z_{l_1-1})\right]\\

&&\sum\limits_{\substack{\sigma'\in IBNC(\chi')\\\sigma'\leq \epsilon'}} \left(\sum\limits_{\substack{ \pi'\in IBNC(\chi')\\ \sigma'\leq \pi'\leq \epsilon'}}\mu_{IBNC}(\sigma',\pi')\right)\phi_{\sigma'} (Z_{l_1} ^V, z_{l_1+1},z_{l_1+2},\cdots ,z_n)\Bigg\} \\

&=&\sum\limits_{\substack{\sigma_1\in IBNC(\chi_1)\\\sigma_1\leq \epsilon_1}} \sum\limits_{\substack{\sigma'\in IBNC(\chi')\\\sigma'\leq \epsilon'}} \left(\sum\limits_{\substack{ \sigma_1\in IBNC(\chi_1)\\ \sigma_1\leq \pi_1\leq \epsilon_1}}\mu_{IBNC}(\sigma_1,\pi_1)\right)\left(\sum\limits_{\substack{ \pi'\in IBNC(\chi')\\ \sigma'\leq \pi'\leq \epsilon'}}\mu_{IBNC}(\sigma',\pi')\right)\\
&&\left\{[\prod\limits_{\substack{W\in \sigma_1\\W\neq V\ni l_1}} \phi_{W}(z_1,\cdots, z_{l_1-1})]\phi_{\sigma'} (Z_{l_1} ^V ,z_{l_1+1},z_{l_1+2},\cdots ,z_n)\right\} \\
\end{array}
$$

For fixed $\sigma_1$ and $\sigma'$, $\sigma=\alpha'^{-1}(\sigma_1,\sigma')\in IBNC(\chi)$ and
$$\mu_{IBNC}(\sigma,\pi)=\mu_{IBNC}(\sigma_1,\pi_1)\mu_{IBNC}(\sigma',\pi').$$
Since the blocks of $\sigma$ are exactly the blocks $W$, $W'$ and $V\cup V'$ in the preceding formula,  we have 
$$\left[\prod\limits_{\substack{W\in \sigma_1\\W\neq V\ni l_1}} \phi_{W}(z_1,\cdots, z_{l_1-1})\right]\phi_{\sigma'} (Z_{l_1} ^V ,z_{l_1+1},z_{l_1+2},\cdots ,z_n)=\phi_{\sigma}(z_1,\cdots,z_n).$$
It follows that
$$\phi(z_1\cdots z_n)=\sum\limits_{\sigma\in IBNC(\chi)} \left(\sum\limits_{\substack{ \pi\in IBNC(\chi)\\ \sigma\leq \pi\leq \epsilon}}\mu_{IBNC}(\sigma,\pi)\right)\phi_{\sigma} (z_1,\cdots ,z_n).$$
Therefore, the mixed moments of free-free-Boolean independent random variables  and the mixed moments of combinatorially free-free-Boolean independent random variables  are determined in the same way. The proof is done.
\end{proof}

\section{Bifree-Boolean central limit law}

In this section, we study an algebraic free-free-Boolean central limit theorem which is an analogy of Voiculescu's algebraic bi-free central limit theorem in \cite{Voi1}.   

Let $z=((z_i)_{i\in \I},(z_j)_{i\in \J},(z_k)_{k\in \K})$ be a three faced family of  random variables in $(\A,\phi)$. 
Notice that $IBNC(\chi)= P(2)$  when $\chi$ is map from $\{1,2\}$ to $\{\ell,r,c\}$ and the first order cumulant of a random variable is always the first moment of it.  
Therefore, the  second order free-free-Boolean cumulants are, as  free cumulants, variances and covariances of random variables.  
For convenience,  given $\omega:[n]\rightarrow\I\amalg \J\amalg \K $, we denote by $\chi_\omega:[n]\rightarrow \{\ell,r,c\}$ such that
$$\chi_\omega(i)=\left\{\begin{array}{ll}
\ell&\text{if}\quad\omega(i)\in \I,\\
r&\text{if}\quad\omega(i)\in \J,\\
c&\text{if}\quad\omega(i)\in \K.\\
\end{array}\right.
$$
Therefore, the  second order free-free-Boolean cumulants are as follows.
\begin{lemma}  \normalfont 
Let   $z=((z_i)_{i\in \I},(z_j)_{i\in \J},(z_k)_{k\in \K})$ be a three-faced family of  random variables in $(\A,\phi)$ and $\omega: \{1,2\}\rightarrow \I\amalg \J\amalg \K$.
Then
$$\kappa_{\chi_\omega,1_{[2]}}(z_{\omega(1)}z_{\omega(2)})= \phi(z_{\omega(1)}z_{\omega(2)}) -\phi(z_{\omega(1)})\phi(z_{\omega(2)}).$$
\end{lemma}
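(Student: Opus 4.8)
The plan is to compute $\kappa_{\chi_\omega,1_{[2]}}$ directly from the definition of the free-free-Boolean cumulant, using the fact that the index set is so small that the lattice $IBNC(\chi_\omega)$ collapses to the full partition lattice $P(2)$. First I would observe, as noted just before the statement, that any $\chi:\{1,2\}\to\{\ell,r,c\}$ gives $IBNC(\chi)=P(2)$: there is no element strictly between $1$ and $2$, so the $\chi$-interval condition is vacuous, and noncrossingness with respect to $\prec_\chi$ is automatic for two points. Hence $IBNC(\chi_\omega)=P(2)=\{1_{[2]},0_{[2]}\}$, where $1_{[2]}=\{\{1,2\}\}$ and $0_{[2]}=\{\{1\},\{2\}\}$.

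Next I would unwind the definition
$$\kappa_{\chi_\omega,1_{[2]}}(z_{\omega(1)},z_{\omega(2)})=\sum_{\substack{\sigma\leq 1_{[2]}\\ \sigma\in IBNC(\chi_\omega)}}\mu_{IBNC}(\sigma,1_{[2]})\,\phi_\sigma(z_{\omega(1)},z_{\omega(2)}).$$
There are exactly two terms. For $\sigma=1_{[2]}$ we get $\mu_{IBNC}(1_{[2]},1_{[2]})=1$ and $\phi_{1_{[2]}}(z_{\omega(1)},z_{\omega(2)})=\phi(z_{\omega(1)}z_{\omega(2)})$. For $\sigma=0_{[2]}$ we need $\mu_{IBNC}(0_{[2]},1_{[2]})$; since by the general M\"obius-function identity $\mu*\zeta=\delta$ on any lattice, summing over the interval $[0_{[2]},1_{[2]}]=\{0_{[2]},1_{[2]}\}$ forces $\mu_{IBNC}(0_{[2]},1_{[2]})+\mu_{IBNC}(1_{[2]},1_{[2]})=0$, hence $\mu_{IBNC}(0_{[2]},1_{[2]})=-1$. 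Also $\phi_{0_{[2]}}(z_{\omega(1)},z_{\omega(2)})=\phi(z_{\omega(1)})\phi(z_{\omega(2)})$. Adding the two contributions yields exactly
$$\kappa_{\chi_\omega,1_{[2]}}(z_{\omega(1)},z_{\omega(2)})=\phi(z_{\omega(1)}z_{\omega(2)})-\phi(z_{\omega(1)})\phi(z_{\omega(2)}),$$
which is the claim.

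There is essentially no obstacle here: the only thing to be careful about is justifying $IBNC(\chi_\omega)=P(2)$ (which follows from the lemma in Section 3 stating $IBNC(\chi)=NC(\chi)$ when $\chi^{-1}(c)\subset\{1,n\}$, together with the fact that $NC$ on a two-element ordered set is all of $P(2)$) and correctly reading off $\mu_{IBNC}(0_{[2]},1_{[2]})=-1$. If one prefers, one can instead invoke Lemma \ref{Moments-cumulant} (the moment-cumulant relation) specialized to $n=2$: it gives $\phi(z_{\omega(1)}z_{\omega(2)})=\kappa_{\chi_\omega,1_{[2]}}(z_{\omega(1)},z_{\omega(2)})+\kappa_{\chi_\omega,0_{[2]}}(z_{\omega(1)},z_{\omega(2)})$, and by Theorem \ref{multiplicative property} the second summand factors as $\kappa_{\chi_\omega|_{\{1\}},1_{\{1\}}}\cdot\kappa_{\chi_\omega|_{\{2\}},1_{\{2\}}}=\phi(z_{\omega(1)})\phi(z_{\omega(2)})$ since first-order cumulants equal first moments; rearranging again gives the stated formula.
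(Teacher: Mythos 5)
Your computation is correct and matches the paper's (essentially unstated) argument: the paper simply notes before the lemma that $IBNC(\chi)=P(2)$ for $n=2$ and that first-order cumulants are first moments, which is exactly the direct Möbius-function calculation you carry out. Both your main route and your alternative via the moment--cumulant relation are valid and consistent with the paper.
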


\begin{definition}\normalfont   A three-faced family of  random variables $z=((z_i)_{i\in \I},(z_j)_{i\in \J},(z_k)_{k\in \K})$ has a \emph{free-free-Boolean} central limit distribution if ,  for all $n\neq 2$,
$$\kappa_{\chi_\omega,1_{[n]}}(z_{\omega(1)},\cdots, z_{\omega(n)})= 0,$$
where $\omega:[n]\rightarrow  \I\amalg \J\amalg \K.$
\end{definition}

The following are  examples of free-free-Boolean families and   free-free-Boolean central limit distributions:

Let $\hh$ be a complex Hilbert space with orthonormal basis $\{e_i\}_{i\in \I}$ and  $\F(\hh)=\C\xi\oplus\bigoplus\limits_{n\geq 1}\hh^{\otimes n}$ be the full Fock space. 
 Let $\ell_i$ be the left creation operators on $\F(\hh)$ such that 	$\ell_i \xi=e_i$ and $\ell_i\zeta=e_i\otimes \zeta$ for all $\zeta\in \bigoplus\limits_{n\geq 1}\hh^{\otimes n}$.
  Let $r_i$ be the right creation operators on $\F(\hh)$ such that 	$r_i \xi=e_i$ and $r_i\zeta= \zeta\otimes e_i$ for all $\zeta\in \bigoplus\limits_{n\geq 1}\hh^{\otimes n}$. 
 Let $P_i$ be the orthogonal projection from $\F(\hh)$ onto $\C\xi\oplus \C e_i$. Then the  family of three-faced families of random variables
 $$\{((\ell_i,\ell_i^*),(r_i,r_i^*),(P_i\ell_iP_i,P_i\ell_i^*P_i))\}_{i\in \I}$$ 
is free-free-Boolean independent  in the probability space  $(B(\F(\hh)),\omega_\xi)$, where $(B(\F(\hh))$ is the set of all bounded operators on $\F(\hh)$ and $\omega_\xi=\langle\cdot\xi,\xi\rangle$ is the vacuum state on $(B(\F(\hh)) $. 
The space $\F(\C e_i)$, which is the Fock space generated by the one dimensional Hilbert space $\C e_i $, plays the role of $\X_i$ in Section 2.  
 
 Suppose that $\I$ has a disjoint partition that $\I=\bigcup\limits_{k\in \K} \I_k$. 
 For each $k$, let $\A_{k,\ell}$ be the unital $C^*$-algebra generated by $\{\ell_i|i\in I_k\}$,
 $\A_{k,r}$ be the unital $C^*$-algebra generated by $\{r_i|i\in I_k\}$  
 and $\A_{k,c}$ be the nonunital $C^*$-algebra generated by $\{P_k\ell_iP_k|i\in I_k\}$,
  where $P_k$ is the projection from $\F(\hh)$ onto the subspace generated by $\{\xi\} \cup \{e_i|i\in \I_k\}$. 
  Then the family of triples of faces  $\{(\A_{k,l},\A_{k,c},\A_{k,c})\}_{k\in \K}$ is free-free-Boolean in  $(B(\F(\hh)),\omega_\xi)$.

 	Moreover, one can easily obtain the following analogue of Theorem 7.4 in \cite{Voi1}.
\begin{proposition}\label{central limit}\normalfont 
There is exactly one free-free-Boolean central limit distribution $\Gamma_C:\C\langle Z_k| k\in  \I\amalg \J\amalg \K\rangle \rightarrow \C$ for a given matrix $C=(C_{k,l})$ with complex entries so that $\Gamma_C(Z_kZ_l)=C_{k,l}, k,l\in  \I\amalg \J\amalg \K$.

Let $h,h^*: \I\amalg \J\amalg \K\rightarrow \hh$ be  maps into the Hilbert space $\hh$ in the preceding example. 
Let 
$$ z_i=\ell(h(i))+\ell^*(h^*(i))$$ for $i\in \I$,
$$ z_j=r(h(j))+r^*(h^*(j))$$ for $j\in \J$
 and 
$$ z_k=P(\ell(h(k))+\ell^*(h^*(k)))P$$ for $k\in \K$, where $P$ is the orthogonal projection from $\F(\hh)$ onto $\C\xi\oplus\hh$ and $\ell(h(i))$ is the creation operator on $\F(\hh)$ such that $\ell(h(i))\xi=h(i)$ and $\ell(h(i))\zeta=h(i)\otimes \zeta$ for all $\zeta\in \bigoplus\limits_{n\geq 1}\hh^{\otimes n}$.
Then $z=((z_i)_{i\in \I},(z_j)_{i\in \J},(z_k)_{i\in \K})$ has a free,free-Boolean central limit distribution $\Gamma_C$ where $C_{k,l}=\langle h(l),h^*(k)\rangle$,$k,l\in \I\amalg \J\amalg \K$.
\end{proposition}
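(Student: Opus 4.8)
The plan is to deduce uniqueness directly from the moment--cumulant formula, and existence by computing moments in the Fock--space model given in the statement.

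For uniqueness, suppose $\Gamma_C$ is a free-free-Boolean central limit distribution with $\Gamma_C(Z_kZ_l)=C_{k,l}$; realize it as the distribution of a three-faced family $z$ in some $(\A,\phi)$. By hypothesis all cumulants $\kappa_{\chi_\omega,1_{[m]}}$ of order $m\neq 2$ vanish, so in particular $\phi(z_k)=0$. Given $\omega:[n]\to\I\amalg\J\amalg\K$, I would apply Lemma \ref{Moments-cumulant} together with the multiplicativity of cumulants (Theorem \ref{multiplicative property}): for $\pi=\{V_1,\dots,V_t\}\in IBNC(\chi_\omega)$ we have $\kappa_{\chi_\omega,\pi}=\prod_{s}\kappa_{\chi_\omega|_{V_s},1_{V_s}}$, which vanishes unless every block of $\pi$ has exactly two elements. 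Hence, using that $\kappa_{\chi_\omega,1_{[2]}}(z_a,z_b)=\phi(z_az_b)-\phi(z_a)\phi(z_b)$,
\[
\Gamma_C(Z_{\omega(1)}\cdots Z_{\omega(n)})=\sum_{\substack{\pi\in IBNC(\chi_\omega)\\ \pi\text{ a pair partition}}}\ \prod_{\{a,b\}\in\pi}\big(\phi(z_az_b)-\phi(z_a)\phi(z_b)\big)=\sum_{\substack{\pi\in IBNC(\chi_\omega)\\ \pi\text{ a pair partition}}}\ \prod_{\{a,b\}\in\pi}C_{\omega(a),\omega(b)}.
\]
Thus every mixed moment is an explicit polynomial in the entries of $C$, so at most one such $\Gamma_C$ can exist, and it vanishes on words of odd length.

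For existence I would exhibit the Fock model of the statement inside $(B(\F(\hh)),\omega_\xi)$. First, $\omega_\xi(z_m)=0$ for every $m$, since creation operators, annihilation operators, and their $P$-compressions all have zero vacuum expectation. Second, letting $z_{\omega(1)}z_{\omega(2)}$ act on $\xi$, in every combination of types the only surviving contribution is the creation part of $z_{\omega(2)}$ followed by the annihilation part of $z_{\omega(1)}$, giving $\omega_\xi(z_{\omega(1)}z_{\omega(2)})=\langle h(\omega(2)),h^*(\omega(1))\rangle=C_{\omega(1),\omega(2)}$; here the projections $P$ cause no trouble because the intermediate vectors lie in $\C\xi\oplus\hh$, which $P$ fixes. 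So $\kappa_{\chi_\omega,1_{[2]}}(z_{\omega(1)},z_{\omega(2)})=C_{\omega(1),\omega(2)}$. The key step is then the general moment $\omega_\xi(z_{\omega(1)}\cdots z_{\omega(n)})$: expanding each $z$ into its creation and annihilation parts and evaluating on $\xi$, a term survives only when creations and annihilations match along a pair partition $\pi$ of $[n]$ that is (i) bi-noncrossing for $\chi_\omega$ --- the left positions pairing noncrossingly in the natural order and the right positions noncrossingly in the reversed order, exactly as in Voiculescu's bi-free central limit computation in \cite{Voi1} --- and (ii) $\chi_\omega$-interval, since each center operator is flanked by the projection $P$ onto $\C\xi\oplus\hh$, which annihilates $\hh^{\otimes p}$ for $p\geq 2$ and so forces the state to return to $\C\xi\oplus\hh$ around every center node, putting that node into the block straddling it. Each surviving pair $\{a,b\}$ contributes $C_{\omega(a),\omega(b)}$, whence $\omega_\xi(z_{\omega(1)}\cdots z_{\omega(n)})=\sum_{\pi}\prod_{\{a,b\}\in\pi}C_{\omega(a),\omega(b)}$ over $\pi\in IBNC(\chi_\omega)$ that are pair partitions. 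Comparing with Lemma \ref{Moments-cumulant}, using multiplicativity and the values $\phi(z_m)=0$, $\kappa_{\chi_\omega,1_{[2]}}=C$ already obtained, an induction on $n$ yields $\kappa_{\chi_\omega,1_{[n]}}(z_{\omega(1)},\dots,z_{\omega(n)})=0$ for all $n\neq 2$: the pairing contributions cancel on both sides, every non-pairing $\pi\neq 1_n$ has all blocks of size $<n$ and vanishes by the inductive hypothesis, and what remains is exactly $\kappa_{\chi_\omega,1_{[n]}}$. This produces the required $\Gamma_C$ with $C_{k,l}=\langle h(l),h^*(k)\rangle$.

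I expect the main obstacle to be this last moment computation: one must verify carefully that the projections $P$ enforce \emph{precisely} the $\chi_\omega$-interval condition and nothing more, while the left/right creation--annihilation bookkeeping reproduces the bi-noncrossing condition. As an alternative that avoids the global Wick analysis, one can observe that $P\ell(e)P$ coincides with the compression of $\ell(e)$ by the rank-two projection onto $\C\xi\oplus\C e$, expand the $z$'s over an orthonormal basis of $\hh$, and then use the multilinearity of $\kappa_{\chi_\omega,1_{[n]}}$ together with the vanishing of mixed free-free-Boolean cumulants (Theorem \ref{Main}) to reduce to cumulants supported on a single one-dimensional site, which can be checked by a direct computation on $\F(\C e)$.
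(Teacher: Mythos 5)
Your proposal is correct and is exactly the argument the paper intends: the paper offers no proof of this proposition beyond remarking that it is the analogue of Theorem 7.4 of \cite{Voi1}, and your Wick-type expansion in the Fock model --- with the key observation that the compressions by $P$ force the intermediate vectors into $\C\xi\oplus\hh$ and hence enforce precisely the $\chi_\omega$-interval condition on pair partitions, while the left/right creation--annihilation bookkeeping yields bi-noncrossingness --- is the standard way to carry that analogue out. Your uniqueness argument via Lemma \ref{Moments-cumulant} and the multiplicativity of Theorem \ref{multiplicative property} is likewise the intended one.
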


We end this section with an algebraic free-Boolean central limit theorem in analogue of Voiculescu's Theorem 7.9 in \cite{Voi1}

\begin{theorem}\normalfont
Let $z_n=((z_{n,i})_{i\in \I},(z_{n,j})_{i\in \J},(z_{n,k})_{i\in \K})$, $n\in \mathbb{N}$, be a free-free-Boolean sequence of three-faced families of random variables in $(\A,\phi)$, such that 
\begin{itemize}
\item[1.] $\phi(z_{n,l})=0$, for all $k\in \I\amalg \J\amalg \K$ and $n\in\mathbb{N}$.
\item[2.] $\sup_{n\in\mathbb{N}}|\phi(z_{n,l_1}\cdots z_{n,l_m})|=D_{l_1,\cdots,l_m}<\infty$ for every $l_1,\cdots,l_m\in\I\amalg \J\amalg \K.$
\item[3.] $\lim\limits_{N\rightarrow \infty} \frac{1}{N}\sum\limits_{n=1}^N\phi(z_{n,l}z_{n,l'})=C_{l,l'}$ for every $l',l\in \I\amalg \J\amalg \K$.
\end{itemize}
Let $S_N=((S_{N,i})_{i\in I},(S_{N,j})_{i\in J})$, where $S_{N,k}=\frac{1}{\sqrt{N}}\sum\limits_{n=1}^N z_{n,k}$ with $k\in \I\amalg \J\amalg \K.$  Let $\Gamma_C$ be the free-free-Boolean central limit distribution in Proposition \ref{central limit} with $C=(C_{k,l})$, $k,l\in \I\amalg \J\amalg \K$. Then, we have
$$\lim\limits_{N\rightarrow\infty} \mu_{S_N}(P)=\Gamma_C(P)$$
for all $P\in \C\langle Z_k|k\in I\cup J\rangle$.
\end{theorem}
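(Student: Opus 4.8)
The plan is to run the classical combinatorial central limit argument with free-free-Boolean cumulants in place of free cumulants. Since $\mu_{S_N}$ and $\Gamma_C$ are linear, it suffices to treat a monomial $P=Z_{\omega(1)}\cdots Z_{\omega(n)}$ with $\omega:[n]\to \I\amalg\J\amalg\K$; write $\chi=\chi_\omega$. By the moment--cumulant formula (Lemma \ref{Moments-cumulant}) and $n$-linearity of the cumulants, expanding each $S_{N,k}=N^{-1/2}\sum_{m=1}^{N}z_{m,k}$ gives
$$\mu_{S_N}(P)=\sum_{\pi\in IBNC(\chi)}\kappa_{\chi,\pi}(S_{N,\omega(1)},\ldots,S_{N,\omega(n)})=N^{-n/2}\sum_{\pi\in IBNC(\chi)}\ \sum_{p:[n]\to[N]}\kappa_{\chi,\pi}(z_{p(1),\omega(1)},\ldots,z_{p(n),\omega(n)}).$$

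First I would exploit that $(z_m)_{m\in\mathbb N}$ is a free-free-Boolean sequence: the triples of algebras generated by the $z_m$ form a free-free-Boolean family indexed by $m$, hence by Theorem \ref{Main} they are combinatorially free-free-Boolean, so a block cumulant $\kappa_{\chi|_V,1_V}$ evaluated at the variables $z_{p(j),\omega(j)}$, $j\in V$, vanishes unless the copy index $p$ is constant on $V$. Together with the multiplicativity $\kappa_{\chi,\pi}=\prod_{V\in\pi}\kappa_{\chi|_V,1_V}$ (Theorem \ref{multiplicative property}), only the $p$ with $\pi\le\ker p$ survive, and parametrizing such $p$ by the value $q(V)$ it takes on each block yields the exact factorization
$$\kappa_{\chi,\pi}(S_{N,\omega(1)},\ldots,S_{N,\omega(n)})=N^{|\pi|-n/2}\prod_{V\in\pi}\left(\frac1N\sum_{m=1}^{N}\kappa_{\chi|_V,1_V}(z_{m,\omega(j)}:j\in V)\right).$$
Each block cumulant is a fixed finite $\C$-combination (with M\"obius coefficients) of products of moments of the family $z_m$, so by hypothesis (2) it is bounded uniformly in $m$; and a singleton block $\{j\}$ contributes the factor $\phi(z_{m,\omega(j)})=0$ by hypothesis (1). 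Hence we may discard every $\pi$ with a singleton block, which forces $|\pi|\le n/2$.

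Power counting then finishes the computation of the limit: terms with $|\pi|<n/2$ carry a factor $N^{|\pi|-n/2}\to0$ and drop out, so only pair partitions $\pi\in IBNC(\chi)$ remain. For a block $\{s,t\}$ with $s<t$ the second-order cumulant equals $\phi(z_{m,\omega(s)}z_{m,\omega(t)})-\phi(z_{m,\omega(s)})\phi(z_{m,\omega(t)})=\phi(z_{m,\omega(s)}z_{m,\omega(t)})$ by hypothesis (1), whose Ces\`aro mean tends to $C_{\omega(s),\omega(t)}$ by hypothesis (3). Thus $\lim_N\mu_{S_N}(P)$ is the sum over pair partitions $\pi\in IBNC(\chi)$ of $\prod_{\{s,t\}\in\pi,\,s<t}C_{\omega(s),\omega(t)}$. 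Applying Lemma \ref{Moments-cumulant} and Theorem \ref{multiplicative property} to $(\C\langle Z_k\rangle,\Gamma_C)$ itself, and using that all cumulants of $\Gamma_C$ of order $\ne2$ vanish while the second-order ones coincide with $\Gamma_C(Z_kZ_l)=C_{k,l}$ (Proposition \ref{central limit}), the moment $\Gamma_C(P)$ expands to exactly the same sum; this gives $\lim_N\mu_{S_N}(P)=\Gamma_C(P)$ for every monomial, hence for every $P$.

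The main obstacle is not a deep step but the bookkeeping in the middle paragraph: one must line up the copy index $p:[n]\to[N]$ with the hypothesis of Theorem \ref{Main}, so that what vanishes is a mixed cumulant over distinct \emph{copies}, not over distinct faces; one must check that the factorization over blocks is an exact identity, not merely an asymptotic one; and one must extract the uniform bound on block cumulants from hypothesis (2) through the M\"obius inversion formula. Once these points are in place, the $N$-power counting and the identification of the limit with $\Gamma_C$ are routine.
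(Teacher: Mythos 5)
Your proposal is correct and follows essentially the same route as the paper: both rest on the additivity of free-free-Boolean cumulants across the copies $z_n$ (i.e., vanishing of mixed cumulants over distinct copy indices), the uniform bound on block cumulants from hypothesis (2), and $N$-power counting to kill all orders except two. The only difference is presentational — the paper computes the limits of the cumulants $\kappa_{\chi_\omega,1_{[m]}}(S_{N,\omega(1)},\dots,S_{N,\omega(m)})$ directly and concludes via ``moments are polynomials in cumulants,'' whereas you expand the moments over all of $IBNC(\chi)$ and identify the limit explicitly as the sum over pair partitions.
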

\begin{proof}
Let $\omega:[m]\in \I\amalg \J\amalg \K$. By Remark 6.4 and the $n$-linearity of $\kappa_{\chi_\omega,1_{[m]}}$, we have
$$\kappa_{\chi_{\omega},1_{[m]}}(S_{N,\omega(1)},\cdots ,S_{N,\omega(m)})=\sum\limits_{n=1}^N {N^{-\frac{m}{2}}}\kappa_{\chi_{\omega},1_{[m]}}(z_{n,\omega(1)},\cdots ,z_{n,\omega(m)}).$$
When $m=1$, we have $\kappa_{\chi_{\omega},1_{[1]}}(z_{n,\omega(1)})=\phi(z_{n,\omega(1)})=0.$\\
When $m=2$, we have $\sum\limits_{n=1}^N {N^{-1}}\kappa_{\chi_{\omega},1_{[2]}}(z_{n,\omega(1)} z_{n\omega(2)})=\sum\limits_{n=1}^N {N^{-1}}\phi(z_{n,\omega(1)} z_{n,\omega(2)})=C_{\omega(1),\omega(2)}$.\\
When $m>2$,  since  $D_{k_1,\cdots,k_m}<\infty$ for all $k_1,\cdots,k_m$, $\sup\limits_{n}|\kappa_{\chi_{\omega},1_{[m]}}(z_{n,\omega(1)},\cdots, z_{n\omega(m)}) |<\infty.$ In this case, 
$$\lim\limits_{N\rightarrow \infty}\kappa_{\chi_{\omega},1_{[m]}}(S_{N,\omega(1)},\cdots, S_{N,\omega(m)})=0.$$
The proof is complete,  because moments are determined by polynomials of cumulants.
\end{proof}

\bibliographystyle{plain}

\bibliography{references}

\noindent Department of Mathematics\\
Indiana University	\\
Bloomington, IN 47401, USA\\
E-MAIL: liuweih@indiana.edu \\

\end{document}